\renewcommand{\Re}{\operatorname{Re}}
\newtheorem{thm}{Theorem}[section]
\newtheorem{cor}{Corollary}[section]
\newtheorem{lem}[thm]{Lemma}
\newtheorem{prop}{Proposition}[section]
\newtheorem{defn}{Definition}[section]
\newtheorem{exmp}{Example}[section]
\newtheorem{rmk}{Remark}[section]
\numberwithin{equation}{section}
\def \R{{\mathbb{R}}}
\def \P {\Phi(x)}
\def \om{\omega(x)}
\def \japxi{\langle \xi \rangle}
\def \japxik{\langle \xi \rangle_k}
\def \japx{\langle x \rangle}
\def \go{g_{\Phi,k}}
\def \gt{\tilde{g}_{\Phi,k}}
\def \zint(#1){Z_{int}(#1)}
\def \zmid(#1){Z_{mid}(#1)}
\def \zext(#1){Z_{ext}(#1)}
\def \sobo(#1){H^{#1}_{\Phi,k}(\R^n)}
\def \V(#1){\Vert_{\Phi,k;#1}}
\def \sobol(#1,#2){\mathcal{H}^{#1,#2}_{\Phi,k;\Theta}(\R^n)}
\def \Vl(#1,#2){\Vert_{\Phi,k;\Theta,#2,#1}}
\def \var{(t,x,\xi)}
\def \J{[0,T] \times \R^n \times \R^n}
\def \la{\langle}
\def \rak{\rangle_k}
\def \ra{\rangle}
\def \h{h(x,\xi)}
\def \T{\Theta(x,\xi)}
\def \c(#1,#2){\chi^{(#1)}_{#2}}
\def \t(#1){\left( \frac{\theta(t)}{t}\right)^{#1}}
\def \p(#1){e^{#1\psi(t)}}
\def \ro{\rho}
\def \G(#1,#2){G^{#1,#2}(\omega,\go)}
\def \Gt(#1,#2,#3){G^{#1,#2}(#3,\go)}
\def \Gint(#1,#2,#3,#4){G^{#1,#2}\{#4\}(\omega,\go)_{#3}^{(1)}}
\def \Gmid(#1,#2,#3,#4,#5,#6){G^{#1,#2}\{#3,#5,#6\}(\omega,\go)_{#4}^{(2)}}
\def \Gext(#1,#2,#3,#4,#5,#6,#7){G^{#1,#2}\{#3,#4,#6,#7\}(\omega,\go)_{#5}^{(3)}}
\def \GTT(#1,#2,#3){G^{#1,#2}(#3,\go)}
\def \GT(#1,#2){G^{#1,#2}(\omega,\gt)}
\def \GTint(#1,#2,#3,#4){G^{#1,#2}\{#4\} (\omega,\gt)_{#3}^{(1)}}
\def \GTmid(#1,#2,#3,#4,#5,#6){G^{#1,#2}\{#3,#5,#6\}(\omega,\gt)_{#4}^{(2)}}
\def \GText(#1,#2,#3,#4,#5,#6,#7){G^{#1,#2}\{#3,#4,#6,#7\}(\omega,\gt)_{#5}^{(3)}}
\def \zpi{   \theta (\h)}
\def \zpii{ \tilde{\theta}(\h) \theta (\h) e^{\psi(\h)}}
\def \zi{t_{x,\xi}}
\def \zii{\tilde t_{x,\xi}}
\def \cn(#1){\mathcal{N}_{#1}(t,x,D_x)}
\def \tcn(#1){\tilde{\mathcal{N}}_{#1}(t,x,D_x)}
\def \CHI{ \chi(t/t_{x,\xi})}
\def \CHIC{\big(1- \CHI\big)}
\providecommand{\keywords}[1]
{
	\small	
	\textbf{\text{Keywords:}} #1
}
\providecommand{\subclass}[1]
{
	\small	
	\textbf{\text{MSC(2010):}} #1
}
\title{\Large  Energy Estimates and Global Well-posedness for a \\Broad Class of Strictly Hyperbolic Cauchy Problems with \\Coefficients Singular in Time\thanks{Dedicated to Bhagawan Sri Sathya Sai Baba on the Occassion of His 96th Birthday.}}
\author{\normalsize Rahul Raju Pattar\thanks{rahulrajupattar@sssihl.edu.in (Corresponding Author)} , N. Uday Kiran\thanks{nudaykiran@sssihl.edu.in}  \\
	\small Department of Mathematics and Computer Science\\
	\small Sri Sathya Sai Institute of Higher Learning, Puttaparthi, India \\
}
\date{}
\begin{document} 
	
	\maketitle
	
	\begin{abstract}
		The goal of this paper is to establish a global well-posedness for a broad class of strictly hyperbolic Cauchy problems with coefficients in $C^2((0,T];C^\infty(\R^n))$ growing polynomially in $x$ and singular in $t$. The problems we study are of strictly hyperbolic type with respect to a generic weight and a metric on the phase space. 
		The singular behavior is captured by the blow-up of the first and second $t$-derivatives of the coefficients
		which allows the coefficients to be either logarithmic-type or oscillatory-type near $t=0$. 
		To arrive at an energy estimate, we perform a conjugation by a pseudodifferential operator of the form $e^{\nu(t)\Theta(x,D_x)},$ where $\Theta(x,D_x)$ explains the quantity of the loss by linking it to the metric on the phase space and the singular behavior while $\nu(t)$ gives a scale for the loss.
		We call the conjugating operator as {\itshape{loss operator}} and
		depending on its order we report that the solution experiences zero, arbitrarily small, finite or infinite loss in relation to the initial datum.
		We also provide a counterexample and derive the anisotropic cone conditions in our setting.\\
		\keywords{Loss of Regularity $\cdot$ Strictly Hyperbolic Operator with non-regular Coefficients $\cdot$ Logarithmic Singularity $\cdot$ Metric on the Phase Space $\cdot$ Global Well-posedness $\cdot$ Pseudodifferential Operators}\\
		\subclass{35L15 $\cdot$ 35B65 $\cdot$ 35B30 $\cdot$ 35S05 }
	\end{abstract}
	
	\begin{center}
		\tableofcontents
	\end{center}
	
	
	\section{Introduction}
	
	A study of loss of derivatives in relation to the initial datum plays a prominent role in the modern theory of hyperbolic equations. It is now well established that the loss is caused due to either an irregularity, a singularity, or a degeneracy of the coefficients with respect to the time variable (see for instance \cite{CSK,Cico1,Cico2,hiro,GG,KuboReis,CicoLor,LorReis,Petkov}, and the references therein). Recently, many authors have also considered the interplay between the irregularities in time and certain suitable bounds on the space (see for instance \cite{AscaCappi1,AscaCappi2,NUKCori1,Rahul_NUK2,RahulNUK3}).   
	
	In this work, our main interest is on the optimality of loss when the coefficients of a second order strictly hyperbolic Cauchy problem are singular in $t$ and unbounded in $x$. In particular, our interest is either logarithmic-type (i.e. $|\ln t|^r, r {\geq 1}$)  blow-up (see Example \ref{ex2}) or infinitely many oscillations (see Example \ref{ex1}) near $t=0$ and the unboundedness in $x$ characterized by a generic weight function. We study the Cauchy  problem using a pseudodifferential calculus associated to a metric on the phase space \cite{Lern,nicRodi} that generalizes the SG metric \cite{AscaCappi1,AscaCappi2}. We investigate how the behavior in $x$ and the singularity in $t$ variables of the coefficients interact and influence the regularity of the solution.
	
	The notion of a metric on phase space $T^{*}\R^{n}\ (\cong \R^{2n})$ was first introduced by H\"ormander \cite{Horm} who studied the smooth functions $p(x,\xi)$ called symbols, using the metric $\japxi^{2\delta} |dx|^2+ \japxi^{-2\rho} |d\xi|^2$, $0\leq \delta<\rho\leq 1$. For some $m \in \R \text{ and } C_{\alpha\beta}>0$, the symbol $p(x,\xi)$ satisfies
	\begin{linenomath*}
		\[
		|\partial_\xi^\alpha \partial_x^\beta p(x,\xi)| \leq C_{\alpha\beta} \japxi^{m-\rho|\alpha|+\delta|\beta|}, 
		\]
	\end{linenomath*}
	where $\alpha,\beta \in \mathbb{N}_0^n\;(\mathbb{N}_0=\mathbb{N} \cup \{0\})$ are multi-indices and $\japxi=(1+|\xi|^2)^{1/2}$. In a more general framework created by Beals and Fefferman \cite{Feff,nicRodi}, one can consider the symbol $p(x,\xi)$ that satisfies the estimate for some $C_{\alpha\beta}>0,$
	\begin{linenomath*}
		\begin{equation*}		
			|\partial_\xi^\alpha \partial_x^\beta p(x,\xi)| \leq C_{\alpha\beta} M(x,\xi) \Psi(x,\xi)^{-|\alpha|} \Phi(x,\xi)^{-|\beta|},
		\end{equation*}
	\end{linenomath*}
	where the positive functions $M(x,\xi), \Psi(x,\xi)$ and $\Phi(x,\xi)$ are specially chosen. In such a case, as given in \cite{Lern}, we can consider the following Riemannian structure on the phase space 
	\begin{linenomath*}
		\begin{equation}
			\label{beals_fefferman}
			g_{x,\xi}=\frac{|dx|^2}{\Phi(x,\xi)^2}+\frac{|d\xi|^2}{\Psi(x,\xi)^2};
		\end{equation}
	\end{linenomath*}
	where $\Phi(x,\xi)$ and $\Psi(x,\xi)$ satisfy certain Riemannian and Symplectic geometric restrictions \cite{Lern}. 
	
	In this paper, we consider a metric (\ref{beals_fefferman}) with $\Phi(x,\xi)=\Phi(x)$ and $\Psi(x,\xi)=\japxik=(k^2+|\xi|^2)^{1/2}$, for a large positive parameter $k$, and the weight function $M(x,\xi)=\om^{m_2}\japxik^{m_1}$ for $m_1,m_2 \in \R.$ Here the functions $\om$ and $\P$ are positive monotone increasing in $|x|$ such that $1\leq \om \lesssim \P\lesssim \japx$ ($\japx := (1+\vert x\vert^2)^{1/2}$). Further, note that $\omega(x)$ and $\Phi(x)$ are associated with the weight and metric respectively, and they specify the structure of the differential equation in space variable. These functions will be discussed in detail in Section \ref{metric}.  
	
	Our interest is to study the behavior of solutions corresponding to a second order linear hyperbolic equation with coefficients that are polynomially growing $x$ and singular at $t=0$. Such equations have a well-known behavior of a loss of derivatives when the coefficients are bounded in space ($\R^n$) together with all their derivatives (see for example \cite{CSK,Cico1,CSR,KuboReis,GG}). The study of such equations is motivated by a behavioral study of blow-up solutions in quasilinear problems, see for example \cite[Section 4]{hiro}. Along with the extension of the results to a global setting ($x \in \R^n$ and the coefficients are allowed to grow polynomially in $x$), we also investigate the behavior at infinity of the solution in relation to the coefficients. By a loss of regularity of the solution in relation to the initial datum we mean a change of indices in an appropriate Sobolev space associated with the metric (\ref{beals_fefferman}).
	
	As a prototype of our model operator (see Section \ref{stmt}), consider the Cauchy problem:
	\begin{equation}
		\begin{rcases}
			\label{PO}
			\partial_t^2u - a(t,x)\Delta_xu = 0,  \quad (t,x) \in [0,T] \times \R^n, \\
			u(0,x)=u_1(x), \quad \partial_tu(0,x)=u_2(x),
		\end{rcases}
	\end{equation}
	where the coefficient $a(t,x)$ is in $ C^2((0,T];C^\infty(\R^n))$ and satisfies the following estimates
	\begin{alignat}{2}
		\label{ell}
		a(t,x) & \geq C_0 \; \om^2, \\ 
		\label{a}
		\vert \partial_x^\beta a(t,x) \vert &\leq C_{\beta}^{(1)} \; \tilde{\theta}(t)  \; \om^2\P^{-|\beta|}  ,\\
		\label{sing1}
		|\partial_x^\beta\partial_t a(t,x)| & \leq C_{\beta}^{(2)} \; \frac{\theta(t)}{t} \; \om^2\P^{-|\beta|}, \\
		\label{sing2}
		|\partial_x^\beta\partial_t^2 a(t,x)| & \leq C_{\beta}^{(3)} \left(\frac{\theta(t)}{t}\right)^2 e^{\psi(t)} \om^2\P^{-|\beta|}, 
	\end{alignat}
	where $ (t,x) \in (0,T] \times \R^n, \;\beta \in \mathbb{N}_0^n$, $C_0,C_{\beta}^{(j)}>0, j=1,2,3$. Here $\tilde \theta,\theta,\psi:(0,+\infty) \to (0,+\infty)$ are positive nonincreasing smooth functions such that $\tilde\theta(t),\theta(t),\psi(t) \geq 1$.
	\begin{rmk}
		From (\ref{sing1}), we have the following estimate
		\[
		\vert \partial_x^\beta a(T,x) - \partial_x^\beta a(t,x) \vert \leq  \int_{t}^{T} \vert \partial_x^\beta \partial_s a(s,x) \vert ds \leq C_{\beta}  \om^2\P^{-|\beta|}  \int_{t}^{T} \frac{\theta(s)}{s} ds.
		\]
		Implying
		\begin{equation*}
			\vert \partial_x^\beta a(t,x) \vert \leq C_{\beta} \om^2\P^{-|\beta|} \int_{t}^{T} \frac{\theta(s)}{s} ds.
		\end{equation*}
		Hence,  we define $\tilde \theta(t)$ as 
		\begin{align}\label{rel2}
			\tilde \theta(t) = \begin{cases}
				C\int_{t}^{T} \frac{\theta(s)}{s} ds,& \text{ when } a(t,\cdot) \text{ is unbounded} \\
				1,& \text{ otherwise,}
			\end{cases}
		\end{align}
		for some $C>0$. The definition (\ref{rel2}) of $\tilde\theta$ suggests that it grows atleast as $|\ln t|$ near $t=0$ and $|\tilde\theta(t)'| = \frac{\theta(t)}{t}$  when the coefficients are unbounded in $t$.
	\end{rmk}
	
	We define a function $\vartheta:(0,+\infty) \to (0,+\infty)$ as
	\begin{equation}\label{cq}
		\vartheta\left( \frac{1}{t}  \right) :=  \theta(t) (\tilde{\theta}(t) + \psi(t)).
	\end{equation}
	This function plays a crucial role in performing conjugation and in defining Sobolev spaces. The rate of growth of $\vartheta$ defines the quantity of the loss in regularity. For the purpose of pseudodifferential calculus in our context, we need $\vartheta$ to satisfy the following estimate
	\begin{equation}
		\label{ineq3}
		\left|  \frac{d^j}{dr^j}\vartheta(r)  \right| \leq C_j \frac{\vartheta(r)}{r^j}, \quad r \in \R^+
	\end{equation}
	for some $C_j>0.$ Note that the above estimate is natural for logarithmic-type functions.
	
	From the broad class of singular coefficients that we consider, below are certain examples of $a(t,x)$ satisfying (\ref{ell} - \ref{sing2}).  Let $n=1,$ $\kappa_1,\kappa_2 \in [0,1]$ and $T$ be sufficiently small.

	\begin{exmp} \label{ex1}
		$ a(t,x) = 4\japx^{2\kappa_1} \left(2+\sin \left( \japx^{1-\kappa_2}\right)\right) c(t) $
		where 
		\[
		c(t) = 2 + e^{-|\ln t|^{1-\alpha}} \sin \left( |\ln t|^{2\alpha} e^{|\ln t|^{1-\alpha}} \right), \quad \text{ for some } \alpha \in (0,1).
		\]
		Here $\om = 2\japx^{\kappa_1}, \P = \japx^{\kappa_2}, \tilde{\theta}(t) = 3 , \theta(t) = |\ln t|^\alpha,$ and $ \psi(t) = {|\ln t|^{1-\alpha}}$.
	\end{exmp}
		
	\begin{exmp} \label{ex2}
		$a(t,x) = \left( \ln(1+1/t) \right)^4$. Here $\om=\P=1, \tilde{ \theta}(t) = (\ln(1+1/t))^4, \theta(t) = (\ln(1+1/t))^3,$ and $ \psi(t) = 1$.
	\end{exmp}

	\begin{exmp} \label{ex3}
		$a(t,x) =9\japx^2  (2+ t\sin(1/t))$. Here $\om = 3 \japx, \P = \japx, \tilde{\theta}(t) =  \theta(t) =1 $ and $\psi(t) =\ln \left(1+ \frac{1}{t}\right) $.
	\end{exmp}
	At this juncture it is important to relate the conditions (\ref{sing1}) and (\ref{sing2}) to the oscillatory behavior studied in the literature, see for example \cite{CSR,Reis,KuboReis}.
    \begin{defn}[Oscillatory Behavior]\label{osci}
		Let $c=c(t) \in L^{\infty}([0,T]) \cap C^2((0,T])$ satisfy the estimate
		\begin{equation}\label{oscill}
			\left\vert \frac{d^j}{dt^j} c(t) \right\vert \leq C_j \left( \frac{1}{t} \Big(  \ln (1+ 1/t)  \Big)^{\gamma} \right)^j, \quad j=1,2.
		\end{equation}
		We say that the oscillating behavior of the function $c$ is
		\begin{itemize}
			\item very slow if $\gamma = 0$
			
			\item slow if $\gamma  \in(0,1)$
			
			\item fast if $\gamma = 1$
			
			\item very fast if (\ref{oscill}) is not satisfied for $\gamma =1.$ 
		\end{itemize} 
	\end{defn}
	\begin{exmp}
		If $c(t) = 2+ \sin \left( \ln \frac{1}{t} \right)^\alpha$ then the oscillations are very slow, slow, fast and very fast if $\alpha \leq 1, \alpha \in (1,2), \alpha=2$ and $\alpha>2,$ respectively.
	\end{exmp}
	
	We see that the estimate in (\ref{oscill}) characterizing the oscillatory behavior is a specific case of the singular behavior considered in this paper. It corresponds to the case $\tilde{\theta}(t) = 1, \theta(t)= \left(\ln(1+1/t) \right)^{\gamma}$ and $\psi(t) = 1$ in (\ref{a})-(\ref{sing2}).
	
	In \cite{Cico1}, Cicognani discussed the well-posedness of (\ref{PO}) when $a(t,x)$ is uniformly bounded in $x$ and satisfies
	\begin{equation}\label{sing3}
		\vert \partial_ta(t,x) \vert \leq \frac{C}{t^q}, \quad q \geq 1, \; (t,x) \in (0,T] \times \R^n ,
	\end{equation}
	without any further conditions on the second time derivative. When $q=1,$ this corresponds to the case $\om=\P=1, \theta(t) = 1$ and $\tilde{\theta}(t) = \ln(1+1/t)$ in (\ref{ell}-\ref{sing1}). The coefficient $a(t,x)$ is allowed to have logarithmic blow-up near $t=0$ for the case  $q=1$. The author reports $C^\infty$-well-posedness when $q=1$ and Gevrey-well-posedness when $q>1$ for 
	the Cauchy problem (\ref{PO}) with a finite and infinite loss of derivatives, respectively. These results have been extended by the authors to a global setting: the case $q=1$ in \cite{RahulNUK3} and $1<q<\frac{3}{2}$ in \cite{Rahul_NUK2}.
	
	In \cite[Theorem 8]{Reis}, Reissig considered the coefficients independent of $x$ and oscillating in $t$ i.e., $a=a(t)$. The author reports no loss, arbitrary small loss, finite loss and infinite loss of derivatives for the cases very slow, slow, fast and very fast oscillations, respectively. These results were {\itshape partially} extended to the case of coefficients depending on both $t$ and $x$ in \cite[Theorem 13]{Reis} and \cite[Theorem 1.2]{KuboReis} where the $C^\infty$ well-posedness is established through the construction of parametrix. But this extension requires the condition
	\begin{linenomath*}
		\[
		\vert \partial_t^j \partial_x^\beta a(t,x)\vert \leq C_{j,\beta} \bigg(\frac{1}{t}\bigg(\ln \frac{1}{t}\bigg)^\gamma\bigg)^j , \quad \text{ for all } j \in \mathbb{N}_0,\; \beta \in \mathbb{N}_0^n
		\]
	\end{linenomath*}
	for $\gamma \in [0,1].$ The second author along with Coriasco and Battisti extended these results to the SG setting in \cite{NUKCori1}. The authors report well-posedness in $\mathcal{S}'(\R^n)$ under the Levi conditions for the weakly hyperbolic Cauchy problem (\ref{PO}) with the coefficient $a(t,x)$ satisfying
	\begin{linenomath*}
		\[
		\vert \partial_t^j \partial_x^\beta a(t,x)\vert \leq C_{j,\beta} \bigg(\frac{1}{t}\bigg(\ln \frac{1}{t}\bigg)^\gamma\bigg)^j \japx^{2-|\beta|}, \quad \text{ for all } j \in \mathbb{N}_0,\; \beta \in \mathbb{N}_0^n.
		\]
	\end{linenomath*}
	We note here that these extensions partially settle the well-posedness issue for oscillatory behavior case as they require the coefficients to be bounded in $t$ and possess all the $t$-derivatives in $(0,T].$ 
	
	In our work, we allow the coefficients to be unbounded near $t=0$ and consider singular behavior specified by just the first and second $t$-derivatives of the coefficients as evident from the conditions (\ref{a}-\ref{sing2}). The main results Theorem \ref{result1} and Theorem \ref{result3} require no further smoothness in $t$-variable. Contributions of this paper are as follows
	\begin{enumerate}
		\item Analyze the effect of blow-up of coefficients near $t=0$ (that is the case when $\tilde\theta$ is unbounded) on the regularity of the solution. We consider coefficients blowing up atmost as $|\ln t|^r, r {\geq 1}.$ Note that these class of problems have not been addressed in the literature. Only the case of $r=1$ is considered in \cite{Cico1}.
		
		\item Analyze the finite loss of regularity when 
		\begin{equation}\label{ineq}
			\vartheta(1/t) \leq C_0^* \ln \left( 1 + \frac{1}{t} \right)
		\end{equation}
		This bridges the gap between \cite[Theorem 2]{Cico1} (i.e. the case $q=1$ in (\ref{sing3})) and \cite[Theorem 8]{Reis} in a {\itshape{global setting}} allowing logarithmic blow-up at $t=0$ and precisely quantify the loss of regularity (both decay and derivatives). Incidentally, this completely settles the well-posedness issue for the oscillatory behavior ($\gamma \in [0,1]$ in (\ref{oscill})) case.
		
		\item Analyze the infinite loss of regularity when (\ref{ineq}) is violated i.e.,
			\begin{equation}\label{ineq2}
				\begin{rcases}
				\begin{aligned}
					\lim_{t \to 0^+} \frac{\vartheta(1/t)}{\vert \ln t \vert} &= \infty, \quad \text{ and }  \\
					C_1^* \left(\ln \left( 1 + \frac{1}{t} \right) \right)^{\varrho_1} \leq \vartheta\left( \frac{1}{t} \right) &\leq C_2^* \left(\ln \left( 1 + \frac{1}{t} \right) \right)^{ \varrho_2}, 
				\end{aligned}
			\end{rcases}
			\end{equation}
			for some $1<\varrho_1 \leq \varrho_2.$ When the coefficients are independent of $x$, Colombini et al. \cite[Theorem 1.2 and 1.4]{CSR} have shown that one can not expect $C^\infty$ well-posedness in the case of {\itshape{very fast oscillations}} ($\gamma>1$ in (\ref{oscill})). In this paper, we quantify the infinite loss of regularity using infinite order pseudodifferential operators.
			
		\item Derive an optimal cone condition (Theorem \ref{result2}) for the solution of the Cauchy problem (\ref{eq1}) from the energy estimates derived in Theorem \ref{result1} and \ref{result3}. The $L^1$ integrability of logarithmic order singularity in (\ref{ineq2}) guarantees that the propagation speed is finite. We report that even the weight function governing the coefficients influences the geometry of the slope of the cone in such a manner that the slope grows as $|x|$ grows.
		
		\item Show that the set of coefficients which result in infinite loss of regularity is nonempty and residual in certain metric space by extending the work of Ghisi and Gobbino \cite[Section 4]{GG} to a global setting. The ideas here rely on  the spectral theorem for pseudodifferential operators on $\R^n$ and Baire category arguments.
	\end{enumerate}

	Our tool kit consists of metrics on the phase space, a localization technique in the extended phase space and the corresponding parameter depedent global symbol classes.  We study a strictly hyperbolic Cauchy problem using a metric of the form
	\begin{equation}\label{go}
		g_{\Phi,k} = \P^{-2} \vert dx\vert^2 + \japxik^{-2}\vert d\xi\vert^2.
	\end{equation}  
	Our localization technique relies upon the Planck function $\h=(\P\japxik)^{-1}$(see Section \ref{metric} for the definition) associated to the metric $g_{\Phi,k}$ for a subdivision of the extended phase space (see Section \ref{zones}). 
	
	To microlocally compensate the loss of regularity, we conjugate a first order system associated to the operator $P$ in (\ref{eq1}) by a pseudodifferential operator of the form
	\begin{equation}\label{infpdo}
		e^{\nu(t)\Theta(x,D_x)},
	\end{equation}
	where $\nu \in C([0,T]) \cap C^1((0,T])$ and $\Theta(x,\xi)=\sigma(\Theta(x,D_x))$ is defined as
	\begin{equation}\label{Theta}
		\Theta(x,\xi) := \vartheta(\P\japxik)= \theta\big(\h\big)\big(\tilde\theta(\h) + \psi(\h)\big)
	\end{equation}
	This is discussed in Section \ref{CONJ}. When (\ref{ineq2}) is satisfied, the operator in (\ref{infpdo}) is of infinite order in both $x$ and $D_x$.
	The operator $\Theta(x,D_x)$ explains the quantity of the loss by linking it to the metric on the phase space and the singular behavior while $\nu(t)$ gives a scale for the loss.
	We call the conjugating operator as {\itshape{loss operator}}.
	The symbol of the operator arising after the conjugation is governed by a metric $\gt$ that is conformally equivalent to the initial metric $\go$. The metric $\gt$ is of the form
	\begin{equation}\label{gt}
		\gt = \T^{2} \go.
	\end{equation} 
	In Section \ref{sb}, we define Sobolev spaces using the loss operator and depending  on its order we report that the solution experiences zero, arbitrarily small, finite or infinite loss in relation to the initial datum defined in the Sobolev spaces.

	\section{Tools}
	
	In this section we introduce the main tools of the paper. The first part is devoted to a class of metrics on the phase space that govern the geometry of the symbols in our consideration. We then introduce Sobolev spaces related to the metrics and the order of singularity. In the following subsection we device a localization technique based on the Planck function associated to the metric and finally we introduce the parameter dependent global symbol classes.
	
	\subsection{Our Choice of Metric on the Phase Space}\label{metric}
	The metrics we consider are of the form given in (\ref{go}) and (\ref{gt}) whose nature is essentially dependent on the properties of $\Phi.$ Before we discuss the structure of $\Phi$, let us review some notation and terminology used in the study of metrics on the phase space, see \cite[Chapter 2]{Lern} for details. Let us denote by $\sigma(X,Y)$ the standard symplectic form on $T^*\R^n\cong \R^{2n}$: if $X=(x,\xi)$ and $Y=(y,\eta)$, then
	\[
		\sigma(X,Y)=\xi \cdot y - \eta \cdot x.
	\]	
	We can identify $\sigma$ with the isomorphism of $\R^{2n}$ to $\R^{2n}$ such that $\sigma^*=-\sigma$, with the formula $\sigma(X,Y)= \langle \sigma X,Y\rangle$. Consider a Riemannian metric $g_X$ on $\R^{2n}$ (which is a measurable function of $X$) to which we associate the dual metric $g_X^\sigma$ given by
	\[ 
	g_X^\sigma(Y)= \sup_{0 \neq Y' \in \R^{2n}} \frac{\langle \sigma Y,Y'\rangle^2}{g_X(Y')}, \quad \text{ for all } Y \in \R^{2n}.
	\]
	
	Considering $g_X$ as a matrix associated to positive definite quadratic form on $\R^{2n}$, we have $g_X^\sigma=\sigma^*g_X^{-1}\sigma$.
One defines the Planck function \cite{nicRodi}, that plays a crucial role in the development of pseudodifferential calculus to be
	\[ 
		h_g(x,\xi) := \sup_{0\neq Y \in \R^{2n}} \Bigg(\frac{g_X(Y)}{g_X^\sigma(Y)}\Bigg)^{1/2}.
	\]
	 The uncertainty principle is quantified as the upper bound $h_g(x,\xi)\leq 1$. In the following, we make use of the strong uncertainty principle, that is, for some $\kappa>0$,
	 \[
	 	h_g(x,\xi) \leq (1+|x|+|\xi|)^{-\kappa}, \quad (x,\xi)\in \R^{2n}.
	 \]
	 The Planck function associated to the metric $\go$ is $\h = (\P\japxik)^{-1}$ while the one to the metric $\gt$ is $\tilde{h}(x,\xi) = \T(\P\japxik)^{-1}.$
	 
	 Basically, a pseudodifferential calculus is the datum of the metric satisfying some local and global conditions. In our case, it amounts to the conditions on $\P$. The symplectic structure and the uncertainty principle also play a natural role in the constraints imposed on $\Phi$. So we consider $\P=\Phi(|x|)$ to be a monotone increasing function of $|x|$ satisfying following conditions: 

	\begin{alignat*}{3}
		1 \; \leq & \quad \Phi(x) &&\lesssim  1+|x| && \quad \text{(sub-linear)} \\
		\vert x-y \vert \; \leq & \quad r\Phi(y) && \implies C^{-1}\Phi(y)\leq \Phi(x) \leq C \Phi(y)  && \quad \text{(slowly varying)} \\
		&\Phi(x+y) && \lesssim  \Phi(x)(1+|y|)^s && \quad \text{(temperate)}
	\end{alignat*}
	for all $x,y\in\R^n$ and for some $r,s,C>0$. Note that $C \geq 1$ in the slowly varying condition with $x=y$. 
	
	For the sake of calculations arising in the development of symbol calculus related to metrics $\go$ and $\gt$, we need to impose following additional conditions:
	\begin{align*}
		 \Phi(x+y) & \leq \Phi(x) + \Phi(y),   \qquad  (\text{Subadditive})\\
		|\partial_x^\beta \Phi(x)| & \lesssim \Phi(x) \japx^{-|\beta|}, \\
		\Phi(ax) &\leq \P,
	\end{align*}
	where $\beta \in \mathbb{Z}_+^n$ and $a \in [0,1]$. It can be observed that the above conditions are quite natural in the context of symbol classes. In our work, we need even the weight function $\om$ to satisfy the above properties given for $\Phi.$ In arriving at an energy estimate using the Sharp G\r{a}rding inequality (see Section \ref{Energy1} for details), we need 
	\begin{linenomath*}
		\[
		\om \lesssim \P, \quad x \in \R^n.
		\] 
	\end{linenomath*}

	\subsection{Sobolev Spaces}\label{sb}
	We now introduce the Sobolev spaces related to the metric $g_{\Phi,k}$.
	\begin{defn} \label{Sobol}   
		The Sobolev space $\sobol(s,\delta)$ for $s=(s_1,s_2) \in \R^2$ and $\delta \in \R$ is defined as
		\begin{linenomath*}
			\begin{equation}
				\label{Sobo3}
				\sobol(s,\delta) = \{v \in L^2(\R^n): e^{\delta\Theta(x,D_x)}\P^{s_2}\la D_x \rak^{s_1}v \in L^2(\R^{n}) \},
			\end{equation} 
		\end{linenomath*}
		equipped with the norm
		$
		\Vert v \Vl(s,\delta) = \Vert e^{\delta\Theta(\cdot,D)}\Phi(\cdot)^{s_2}\la D \rak^{s_1}v \Vert_{L^2} .
		$ 
	\end{defn}
	Here the operator $\Theta(x,D_x)$ is as in (\ref{Theta}). When $\vartheta$ satisfies the estimate in (\ref{ineq}), then the operator $e^{\delta\Theta(x,D_x)}$ is a finite order pseudodifferential operator. In that case the Sobolev spaces $\sobol(s,\delta)$ are of the form given by the following definition.
	\begin{defn} \label{Sobo}   
		The Sobolev space $\sobo(s)$ for $s=(s_1,s_2) \in \R^2$ is defined as
		\begin{linenomath*}
			\begin{equation}
				\label{Sobo2}
				\sobo(s) = \{v \in L^2(\R^n): \P^{s_2}\langle D_x \rak^{s_1}v \in L^2(\R^{n}) \},
			\end{equation} 
		\end{linenomath*}
		equipped with the norm
		$
		\Vert v \V(s) = \Vert \Phi(\cdot)^{s_2}\la D \rak^{s_1}v \Vert_{L^2} .
		$ 
		\end{defn}
		
		\begin{rmk}\label{sobormk}(Relation between $\sobol(s,\delta)$ and $\sobo(s)$)
			\begin{enumerate}
				\item If  $\vartheta$ is a bounded function then we have the equivalence $	\sobo(s) \equiv \sobol(s,\delta),$ as $\T$ is a bounded function in both $x$ and $\xi.$
				\item If $\lim\limits_{t \to 0^+} \frac{\vartheta(1/t)}{|\ln t|} = 0$, $	\sobo(s+\varepsilon e) \subseteq \sobol(s,\delta) \subseteq \sobo(s-\varepsilon e)$ for every $\varepsilon>0$.
				\item If $\vartheta(1/t) \equiv C_0 \ln(1+1/t)$ for some $C_0>0$, then $
				\sobo(s+C_0\delta e) \equiv \sobol(s,\delta).$ Here
				$\Theta(x,\xi) = C_0\ln (1+\P\japxik)$. 				
			\end{enumerate}
		\end{rmk}		
		
	\subsection{Subdivision of the Phase Space}\label{zones}
	One of the main tools in our analysis is the division of the extended phase space, $J=\J$, into three regions using the Planck function, $h(x,\xi)=(\P \japxik)^{-1}$, and the functions $\tilde\theta, \theta$ and $\psi$ which specify the order of singularity. We use these regions in the proof of Theorem \ref{result1} and Theorem \ref{result3} (see Section \ref{Proof1}) to handle the low regularity in $t$. We define $t_{x,\xi}$ and $\tilde t_{x,\xi}$ for a fixed $(x,\xi)$ as 
	\begin{linenomath*}
		\begin{align*}
			t_{x,\xi}&=N\:h(x,\xi) \zpi , \qquad \text{ and }\\
			\tilde t_{x,\xi}&=N\:h(x,\xi) \zpii,
		\end{align*}
	\end{linenomath*}
	where $N$ is the positive integer. For a fixed $(x,\xi)$ we split the time interval as 
	\[
		[0,T] = [0,t_{x,\xi}] \cup (t_{x,\xi}, \tilde t_{x,\xi}] \cup (\tilde t_{x,\xi}, T].
	\]
	We define the regions as below:
	\begin{linenomath*}
		\begin{align*}
			\label{zone1}
			\zint(N)&=\{(t,x,\xi)\in J : 0 \leq t \leq t_{x,\xi}\}, \\
			\zmid(N) &=\{(t,x,\xi)\in J : t_{x,\xi} < t \leq \tilde t_{x,\xi}\}, \\
			\zext(N) &=\{(t,x,\xi)\in J : \tilde t_{x,\xi} < t \}.
		\end{align*}
	\end{linenomath*}
	Note that for all $(x,\xi) \in \R^{2n}$, 
	\begin{equation}
		\label{zpts}
		\h \leq t_{x,\xi} \leq \tilde t_{x,\xi} \leq T.
	\end{equation}
	\begin{rmk}
		When $\tilde \theta(t)$ and $\psi(t)$ are bounded functions we see that $t_{x,\xi} \sim \tilde t_{x,\xi}.$ In such a case, given $(x,\xi)$ we split the time interval and the extended phase space as
		\[
		\begin{aligned}
			[0,T] &= [0,t_{x,\xi}] \cup ( t_{x,\xi}, T],\\
			J &= \zint(N) \cup \zext(N).
		\end{aligned}
		\]
		We do not need the region $\zmid(N).$ 
		An example of such a case is the oscillatory behavior (including very fast oscillations) i.e., $\theta(t) \sim (\ln (1+1/t))^{\gamma},   \tilde{\theta}(t) = \psi(t) = 1, \gamma \in [0,+\infty).$ In \cite{CSR} where fast oscillating coefficients depending only on $t$ are dealt, the authors subdivide the extended phase space into two regions - $\zint(N)$ and $\zext(N)$ using the time splitting point $t_{x,\xi}.$ Similar is the case in \cite{Cico1,RahulNUK3} where $\theta(t) = 1$ and there is no condition on the second time derivative of the coefficients.
	\end{rmk}
	
	\subsection{Parameter Dependent Global Symbol Classes}	\label{Symbol classes}
	In this section, we define parameter dependent global symbol classes whose geometry is governed by the metrics $\go$ and $\gt.$ Let $m_j \in \R$ for $j=1,\dots,6.$
	\begin{defn}
		$\G(m_1,m_2)$ is the space of all functions $a=a(x,\xi)\in C^\infty( \R^{2n})$ satisfying the symbolic estimate
		\begin{linenomath*}
			\begin{equation}
				\vert \partial_\xi^\alpha D_x^\beta a(x,\xi) \vert 
				\leq C_{\alpha \beta}   \japxik^{m_1-|\alpha|} \om^{m_2} \Phi(x)^{-|\beta|}	,
			\end{equation}
		\end{linenomath*}	
		for constants $C_{\alpha \beta}>0$ and all $\alpha,\beta \in \mathbb{N}^n_0$.
	\end{defn}
	We denote by $G^{-\infty}$ the class of symbols in
	\[
	\bigcap_{m_1,m_2 \in \R} \G(m_1,m_2).
	\]
	Note that $C_{\alpha\beta}(>0)$ is a generic constant which may vary from equation to equation.
	\begin{defn}\label{def2}
		$\GT(m_1,m_2)$ is the space of all functions $a=a(x,\xi)\in C^\infty( \R^{2n})$ satisfying the symbolic estimate
		\begin{linenomath*}
			\begin{equation}
				\vert \partial_\xi^\alpha D_x^\beta a(x,\xi) \vert 
				\leq C_{\alpha \beta}   \japxik^{m_1-|\alpha|} \om^{m_2} \Phi(x)^{-|\beta|} \T^{(\vert \alpha\vert+\vert \beta \vert)}	,
			\end{equation}
		\end{linenomath*}	
		for constants $C_{\alpha \beta}>0$ and all $\alpha,\beta \in \mathbb{N}^n_0$.
	\end{defn}
	\begin{defn}
		$\Gint(m_1,m_2,N,m_3)$ is the space of all functions $a(t,x,\xi)$ in \\$ C^2((0,T];C^\infty(\mathbb{R}^{2n}))$ satisfying 
		\begin{linenomath*}
			\begin{equation}
				\label{symest1}
				|\partial_\xi^\alpha  D_x^\beta a(t,x,\xi)| \leq C_{\alpha\beta} \japxik^{m_1-|\alpha|} \om^{m_2}\P^{-|\beta|} \tilde \theta(t)^{m_3}
			\end{equation}	
		\end{linenomath*}	
		for constants $C_{\alpha \beta}>0$ and for all $(t,x,\xi) \in \zint(N) $ and all $\alpha,\beta \in \mathbb{N}^n_0$.	
	\end{defn}
	
	\begin{defn}
		$\Gmid(m_1,m_2,m_3,N,m_4,m_5)$ is the space of all functions $a(t,x,\xi)\in C^2((0,T]; C^\infty(\mathbb{R}^{2n}))$ satisfying the symbolic estimate
		\begin{linenomath*}
			\begin{equation}\label{symest2}
				\vert \partial_\xi^\alpha D_x^\beta a(t,x,\xi) \vert 
				\leq C_{\alpha \beta}  \japxik^{m_1-|\alpha|} \om^{m_2}\P^{-|\beta|}		
				\bigg( \frac{\theta(t)}{t}\bigg)^{m_3} \tilde \theta(t)^{m_4+m_5(|\alpha| + |\beta|)}.
			\end{equation}
		\end{linenomath*}	
		for constants $C_{\alpha \beta}>0$ and for all $(t,x,\xi) \in \zmid(N)$ and all $\alpha,\beta \in \mathbb{N}^n_0$.
	\end{defn}

	\begin{defn}
		$\Gext(m_1,m_2,m_3,m_4,N,m_5,m_6)$ for $m_3 \geq m_4,$ is the space of all functions $a(t,x,\xi)\in C^2((0,T] ; C^\infty(\mathbb{R}^{2n}))$ satisfying the symbolic estimate
		\begin{linenomath*}
			\begin{equation}\label{symest3}
				\vert \partial_\xi^\alpha D_x^\beta a(t,x,\xi) \vert 
				\leq C_{\alpha \beta} \japxik^{m_1-|\alpha|} \om^{m_2}\P^{-|\beta|}		
				\bigg( \frac{\theta(t)}{t}\bigg)^{m_3}  e^{m_4\psi(t)} \tilde \theta(t)^{m_5+m_6(|\alpha| + |\beta|)}.
			\end{equation}
		\end{linenomath*}	
		for constants $C_{\alpha \beta}>0$ and for all $(t,x,\xi) \in \zext(N) $ and all $\alpha,\beta \in \mathbb{N}^n_0$.
	\end{defn}	
	Similar to Definition \ref{def2}, we can define $\GTint(m_1,m_2,N,m_3),$\\$ \GTmid(m_1,m_2,m_3,N,m_4,m_5)$ and $\GText(m_1,m_2,m_3,m_4,N,m_5,m_6).$
	\begin{rmk}
	When $\tilde\theta(t)$ is a bounded function, we have
	\begin{equation}\label{inclu}
		\begin{rcases}
		\begin{aligned}
			\Gint(m_1,m_2,N,m_3) &\equiv \Gint(m_1,m_2,N,0),\\
			\Gmid(m_1,m_2,m_3,N,m_4,m_5) &\equiv  \Gmid(m_1,m_2,m_3,N,0,0), \\
			\Gext(m_1,m_2,m_3,m_4,N,m_5,m_6)&\equiv  \Gext(m_1,m_2,m_3,m_4,N,0,0).\\
		\end{aligned}
		\end{rcases}
	\end{equation}
	\end{rmk}
	Given a $t$-dependent global symbol $a(t,x,\xi)$, we can associate a pseudodifferential operator $Op(a)=a(t,x,D_x)$ to $a(t,x,\xi)$ by the following oscillatory integral
	\begin{align*}
		a(t,x,D_x)u(t,x)& =\iint\limits_{\mathbb{R}^{2n}}e^{i(x-y)\cdot\xi}a(t,x,\xi){u}(t,y)dyd \xi\\
		& = \int\limits_{\mathbb{R}^n}e^{ix\cdot\xi}a(t,x,\xi)\hat{u}(t,\xi) \textit{\dj}\xi.
	\end{align*}
	where $\textit{\dj}\xi = (2\pi)^{-n}d \xi.$
	
	As for the calculus of symbol classes $G^{m_1,m_2}(\omega,g_{\Phi,k})$, we refer to \cite[Section 1.2 \& 3.1]{nicRodi}. The calculus for the operators with symbols in the additive form 
	\begin{linenomath*}
		\begin{equation}\label{symadd}
			\begin{rcases}
		\begin{aligned}
			a(&t,x,\xi) = a_1\var + a_2\var + a_3\var, \quad \text{for} \quad \\
			a_1 &\in \Gint(\tilde m_1,\tilde m_2,N,\tilde m_3), \\
			a_2 & \in \Gmid(m_1',m_2',m_3',N,m_4',m_5')\\
			a_3 & \in \Gext(m_1,m_2,m_3,m_4,N,m_5,m_6)
		\end{aligned}
		\end{rcases}
		\end{equation}
	\end{linenomath*}
	is given in Appendix I of this paper. The calculus for the $\gt$ versions of the symbol classes follows in similar lines. This requires that the function $\T$ is sub-additive and sub-multiplicative both $x$ and $\xi$ variables separately i.e.,
		\begin{alignat*}{3}
			\Theta(x+y,\xi) &\leq C\big( \Theta(x,\xi) + \Theta(y,\xi) \big), \qquad && \Theta(x+y,\xi) &&\leq C \Theta(x,\xi)  \Theta(y,\xi), \\
			\Theta(x,\xi+\eta) &\leq C\big( \Theta(x,\xi) + \Theta(x,\eta) \big), && \Theta(x,\xi+\eta) &&\leq C \Theta(x,\xi)  \Theta(x,\eta).
		\end{alignat*}
	In fact, the sub-multiplicative property can be derived from the sub-additivity as $\Theta \geq1.$
	 
	While dealing with the case of infinite order loss, we need to keep track of the weight sequences with respect to both $x$ and $\xi$. To this end we replace the generic constant $C_{\alpha\beta}$ by $CC'_{|\alpha|}K'_{|\beta|}$ such that
	\begin{equation}\label{wgtseq}
		\inf_{j \in \mathbb{N}} \frac{C'_jK_j'}{(\P\japxik)^j} \lesssim e^{-\delta_0\T}, \quad \delta_0>0.
	\end{equation}
	The calculus of the operators with symbols governed by such weight sequences can be developed in similar lines to the calculus given in the Appendix I of this paper along with the standard techniques from the book \cite[Section 6.3]{nicRodi}, \cite[Appendix I \& II]{Rahul_NUK2} and \cite[Appendix]{AscaCappi2}.

		\section{Statements of the Main Results}  \label{stmt}
	Before we state the main result of this paper, we introduce our model strictly hyperbolic equation. 
	We study the Cauchy problem 
	\begin{linenomath*}
		\begin{equation}
			\begin{cases}
				\label{eq1}
				P(t,x,D_t,D_x)u(t,x)= f(t,x), \qquad (t,x) \in (0,T] \times \R^n, \\
				u(0,x)=f_1(x), \quad \partial_tu(0,x)=f_2(x)
			\end{cases}
		\end{equation}
	\end{linenomath*}
	with the strictly hyperbolic operator $P(t,x,D_{t},D_{x}) = -D_t^2 + a(t,x,D_x)+ b(t,x,D_x)$ where
	\begin{linenomath*}			
		\[
		a(t,x,\xi)  = \sum_{j,l=1}^{n} a_{j,l}(t,x)\xi_j\xi_l \quad \text{ and } \quad
		b(t,x,\xi)  = i\sum_{j=1}^{n} b_{j}(t,x)\xi_j + b_{n+1}(t,x).
		\]
	\end{linenomath*}
	The matrix $(a_{j,l}(t,x))$ is real symmetric with $a_{j,l} \in C^2((0,T];C^\infty(\R^n))$ and $b_j \in C([0,T];C^\infty(\R^n))$. 
	The assumptions on $P$ are as follows
	\begin{linenomath*}
		\begin{align}
			\label{elli}
			a(t,x,\xi) &\geq C_0 \japxik^2 \om^2, \\
			\label{Lower}
			\vert \partial_\xi^\alpha \partial_x^\beta b(t,x,\xi) \vert &\leq C_{\alpha\beta} \japxik^{1-\vert \alpha \vert} \om \P^{-\vert \beta \vert},
		\end{align}
		for $(t,x,\xi) \in [0,T] \times \R^n\times \R^n$ and 
		\begin{align}
			\label{bound}
			\vert \partial_\xi^\alpha \partial_x^\beta a(t,x,\xi) \vert &\leq C_{\alpha\beta}   \japxik^{2-\vert \alpha \vert}  \om^2 \P^{-\vert \beta \vert} \tilde{\theta}(t), \\
			\label{B-up1}
			\vert \partial_\xi^\alpha \partial_x^\beta \partial_t a(t,x,\xi) \vert & \leq C_{\alpha\beta} \japxik^{2-\vert \alpha \vert} \om^2 \P^{-\vert \beta \vert} \frac{\theta(t)}{t},\\
			\label{B-up2}
			\vert \partial_\xi^\alpha \partial_x^\beta \partial_t^2 a(t,x,\xi) \vert & \leq C_{\alpha\beta} \japxik^{2-\vert \alpha \vert} \om^2 \P^{-\vert \beta \vert} \frac{\theta(t)^2}{t^2} e^{\psi(t)},
		\end{align}
	\end{linenomath*}		
	for $(t,x,\xi) \in (0,T] \times \R^n\times \R^n, \alpha,\beta \in \mathbb{N}_0^n$.

	We now state the main results of this paper whose proof is presented in Section \ref{Proof1}.
	Let $e=(1,1)$. 
	
	\begin{thm}[zero, arbitrarily small or finite loss]\label{result1}
		Consider the strictly hyperbolic Cauchy problem  (\ref{eq1}) satisfying the conditions (\ref{elli}) - (\ref{B-up2}) and (\ref{ineq}). Let the initial datum $f_j$ belong to $\sobo(s+{(2-j)}e) $, $j=1,2$ and the right hand side $f \in C([0,T];\sobo(s))$.
		Then, for every $\varepsilon \in (0,1)$ there exist $\kappa_0,\kappa_1 >0$ such that for every $s \in \R^2$ there is a unique global in time solution
		\begin{linenomath*}
			\[
				u \in \bigcap\limits_{j=0}^{1}C^{1-j}\Big([0,T];\sobol(s+je,{-\kappa(t)}) \Big),
			\]
		\end{linenomath*}
		where 
		\[
			\kappa(t)= \begin{cases}
			\begin{aligned}
				&\kappa_0+\kappa_1t^\varepsilon/\varepsilon, \quad \text{when } \tilde\theta \text{ is unbounded}\\
				& \kappa_0+\kappa_1t, \qquad \text{ otherwise.}
			\end{aligned}
			\end{cases}
		\]
		More specifically, the solution satisfies the a-priori estimate		
		\begin{linenomath*}
			\begin{equation}
				\begin{aligned}
					\label{est2}
					C\sum_{j=0}^{1} \Vert \partial_t^ju(t,\cdot) \Vl(s+{(1-j)}e,{-\kappa(t)}) \; &\leq \sum_{j=1}^{2} \Vert f_j\V(s+{(2-j)}e)\\
					&\qquad + \int_{0}^{t}\Vert f(\tau,\cdot)\Vl(s,-{\kappa(\tau)})\;d\tau
				\end{aligned}
			\end{equation}
		\end{linenomath*}
		for $0 \leq t \leq T, \; C=C_s>0$.				
	\end{thm}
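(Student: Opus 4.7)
The plan is to reduce the second-order operator $P$ to an equivalent first-order pseudodifferential system, diagonalize its principal part modulo lower-order terms, and then conjugate by the loss operator $e^{\nu(t)\Theta(x,D_x)}$ with $\nu(t)=\kappa(t)$ chosen precisely so that the singular behaviour of the coefficients is absorbed into the conjugation. First I would factor the characteristic polynomial: strict hyperbolicity (\ref{elli}) gives the real roots $\lambda_{1,2}(t,x,\xi)=\mp\sqrt{a(t,x,\xi)}$, which by (\ref{bound})--(\ref{B-up2}) inherit the same zone-wise symbolic structure, belonging to $\Gint(1,1,N,1/2)$, $\Gmid(1,1,0,N,\cdot,\cdot)$ and $\Gext(1,1,0,0,N,\cdot,\cdot)$ respectively (the square root being harmless because $a\geq C_0\japxik^2\om^2$). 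Setting $U=(\la D_x \rak u+b_0 u,\,D_t u)^T$ for a suitable elliptic first-order $b_0$ converts (\ref{eq1}) to $D_t U = A(t,x,D_x) U + B(t,x,D_x) U + F$, and a standard symmetrizer-plus-diagonalizer $M(t,x,D_x)$, constructed from $\lambda_1,\lambda_2$ via the calculus of Section \ref{Symbol classes} (Appendix I), puts the system in the form $D_t V=\Lambda V + R V + \tilde F$ with $\Lambda$ diagonal real and $R$ of order $(0,0)$ modulo acceptable errors.

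Next I would conjugate the system by $e^{\nu(t)\Theta(x,D_x)}$, where $\Theta$ is given in (\ref{Theta}) and $\nu(t)$ is chosen so that $\nu'(t)$ dominates the worst singular symbol that appears from the lower-order terms after diagonalization. Using $\partial_t e^{\nu\Theta}=\nu'(t)\Theta\, e^{\nu\Theta}$ together with the commutator estimates for symbols in $\GT{}$-classes, the conjugated system becomes $D_t W = \Lambda W + \big(R - i\nu'(t)\Theta\big)W + G$. The key is the zone-wise analysis of $R$: contributions arising from $\partial_t a$ and $\partial_t^2 a$ are bounded in $\zint(N)$ by $h^{-1}\tilde\theta(\h)\in\Gint(1,1,N,1)$ simply because $t\leq t_{x,\xi}\sim \h\theta(\h)$, while in $\zmid(N)$ and $\zext(N)$ they are pointwise controlled by $\theta(t)/t\cdot(\tilde\theta(t)+\psi(t))$, which is exactly the quantity that, integrated against $\chi_{\{t>t_{x,\xi}\}}$, is dominated by $\Theta(x,\xi)=\vartheta(\P\japxik)$ (this is where property (\ref{ineq3}) and the definition (\ref{cq}) are used). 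Thus, for $\kappa_1$ large enough and $\kappa(t)$ chosen as stated, the operator $R-i\nu'(t)\Theta$ has a nonpositive principal symbol of order $(1,1)$.

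At this stage I would apply the sharp G\r{a}rding inequality in the $g_{\Phi,k}$-calculus; the hypothesis $\om\lesssim\P$ recalled at the end of Section \ref{metric} is exactly what makes the remainder of the sharp G\r{a}rding inequality absorbable, since the gain of one order in $\langle\xi\rangle_k$ matches the loss of one $\Phi^{-1}$. This gives
\begin{equation*}
\frac{d}{dt}\|W(t)\|_{L^2}^{2} \leq C\|W(t)\|_{L^2}^{2} + C\|G(t)\|_{L^2}^{2},
\end{equation*}
and Gronwall's lemma produces an $L^2$ bound for $W$. Un-conjugating (i.e., multiplying by $e^{-\nu(t)\Theta(x,D_x)}$) and undoing the diagonalization/symmetrization transfers this into the weighted Sobolev estimate (\ref{est2}) by Remark \ref{sobormk} and Definitions \ref{Sobol}--\ref{Sobo}; choosing $k$ large enough ensures invertibility of all operators involved. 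Uniqueness follows from the same energy estimate applied to the difference of two solutions, and existence is obtained by a standard approximation argument where $a(t,x,\xi)$ is regularized in $t$ (e.g. by $a(t+\varepsilon,x,\xi)$), the bound (\ref{est2}) being uniform in $\varepsilon$.

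The main obstacle I anticipate is the zone-wise bookkeeping when proving that the lower-order symbol $R$ is controlled by $\nu'(t)\Theta$: one must verify that in $\zint(N)$ the potentially singular $\theta(t)/t$ is absorbed by the size of the Planck function through $t\leq N h(x,\xi)\theta(\h)$, while in $\zmid(N)\cup\zext(N)$ the integral $\int_{t_{x,\xi}}^{t}\theta(\tau)/\tau\,(\tilde\theta(\tau)+\psi(\tau))\,d\tau$ is bounded by $C\Theta(x,\xi)+\kappa_1 t^\varepsilon/\varepsilon$; this last step is precisely where (\ref{ineq}) is used to guarantee that the bound is finite and that $\Theta$ is of finite order, yielding finite rather than infinite loss and fixing the explicit shape of $\kappa(t)$.
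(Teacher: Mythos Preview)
Your overall architecture—reduction to a first-order system, diagonalization, conjugation by a loss operator, sharp G\r{a}rding, Gronwall—matches the paper. The gap is in the conjugation step: a single conjugation by $e^{\nu(t)\Theta(x,D_x)}$ with $\nu'(t)=\kappa_1 t^{\varepsilon-1}$ cannot do the job you claim. After conjugation the competing term is $\nu'(t)\Theta(x,\xi)$, which under (\ref{ineq}) is at most of order $t^{\varepsilon-1}\ln(1+\P\japxik)$. But in $\zint(N)$ the lower-order remainder you call $R$ contains a piece of genuine order $(1,1)$ (in the paper's notation this is $Q_3$, arising inevitably once you regularize the roots to avoid the non-integrable $\theta(t)/t$ blow-up at $t=0$), bounded by $C\om\japxik\,\tilde\theta(t)$. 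Near the boundary $t\sim t_{x,\xi}\sim h\,\theta(h)$ one has $\nu'(t)\Theta\sim h^{\varepsilon-1}\Theta$ while $|R|\sim h^{-1}\tilde\theta(h)$; the ratio is $\sim h^{\varepsilon}\Theta/\tilde\theta(h)\to 0$ as $h\to 0$, so the asserted nonpositivity of $R-i\nu'(t)\Theta$ fails and sharp G\r{a}rding cannot be applied as stated. If instead you do not regularize the roots, $D_tM$ carries a $\theta(t)/t$ factor that is not dominated by $\nu'(t)\Theta$ either (same computation at $t\sim t_{x,\xi}$).

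The paper resolves this by performing \emph{two} conjugations rather than one. The first is by an operator with symbol $\exp\bigl(-\int_0^t\mathfrak{M}_1(r,x,\xi)\,dr\bigr)$, where $\mathfrak{M}_1$ is the explicit zone-dependent majorant (\ref{psi2}): in $\zint(N)$ it equals $\kappa\,\om\japxik\,\tilde\theta(t)$, i.e.\ it \emph{is} of order $(1,1)$ and therefore dominates $|\sigma(P_4)|+|\sigma(Q_3)|$ pointwise. The crucial estimate (\ref{psi3}), namely $\int_0^T\mathfrak{M}_1\,dt\le \kappa_0\,\Theta(x,\xi)$, is what links this zone-wise majorant back to $\Theta$ and produces the constant $\kappa_0$ in $\kappa(t)$; note that $\mathfrak{M}_1$ is \emph{not} of the product form $\nu'(t)\Theta(x,\xi)$. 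The remainders generated by this first conjugation (the paper's $P_5$) are of the form $t^{-1+\varepsilon}\Theta$-bounded, and only then does a second conjugation by $\exp\bigl(-\int_0^t\kappa_1 r^{\varepsilon-1}\Theta\,dr\bigr)$ absorb them, yielding the $\kappa_1 t^{\varepsilon}/\varepsilon$ part of $\kappa(t)$. Your last paragraph correctly identifies the delicate zone bookkeeping, but the resolution requires this two-step scheme with a non-separable first weight, not a single product-form loss operator.
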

	In view of the Remark \ref{sobormk}, we see that when $\theta(t), \tilde \theta(t)$ and $\psi(t)$ are all bounded i.e., $\T \sim 1$, we have no loss. When $\vartheta(1/t)  \sim \ln(1+1/t)$ i.e., $\T \sim \ln (1+\P\japxik)$, we have at most finite loss of regularity. In between both the cases, the loss is arbitrarily small. The above result not only extends \cite[Theorem 8]{Reis} to the case of coefficients depending on $x$ and unbounded in $t$ but also to a global setting and hence settles the well-posedness issue for the oscillatory behavior case ($\gamma \in[0,1]$ in Definition \ref{osci}).
	\begin{thm}[Infinite Loss]\label{result3}
		Consider the strictly hyperbolic Cauchy problem  (\ref{eq1}) satisfying the conditions (\ref{elli}) - (\ref{B-up2}) and (\ref{ineq2}) with generic constant $C_{\alpha\beta}$ replaced by $CC'_{|\alpha|} K'_{|\beta|}$ as in (\ref{wgtseq}). Let the initial datum $f_j$ belong to $\sobol(s+{(2-j)}e,\delta_1) $, $\delta_1>0, j=1,2$ and the right hand side $f \in C([0,T];\sobol(s,\delta_2)), \delta_2>0.$
		Then, for every $\varepsilon\in(0,1)$ there exist $\delta^*,\tilde\kappa_0,\tilde\kappa_1 >0$ such that for every $s \in \R^2$ there exists a unique solution
		\begin{linenomath*}
			\[
			u \in \bigcap\limits_{j=0}^{1}C^{1-j}\Big([0,T^*];\sobol(s+je,{\tilde\kappa(t)})) \Big),
			\]
		\end{linenomath*}
		where 
			\[
		\tilde \kappa(t)= \begin{cases}
			\begin{aligned}
				&\tilde\kappa_0 + \tilde\kappa_1 ({T_0}^\varepsilon-t^\varepsilon)/\varepsilon, \quad \text{when } \tilde\theta \text{ is unbounded}\\
				& \tilde\kappa_0 + \tilde\kappa_1 ({T_0}-t), \;\qquad \text{ otherwise,}
			\end{aligned}
		\end{cases}
		\]
		and $T_0=\min\{\delta^*,\delta_1,\delta_2\}$. More specifically, the solution satisfies the a-priori estimate		
		\begin{linenomath*}
			\begin{equation}
				\begin{aligned}
					\label{est}
					C\sum_{j=0}^{1} \Vert \partial_t^ju(t,\cdot) \Vl(s+{(1-j)}e,{\tilde\kappa(t)}) \; &\leq \sum_{j=1}^{2} \Vert f_j\Vl(s+{(2-j)}e,{\tilde\kappa(0)})\\
					&\qquad+ \int_{0}^{t}\Vert f(\tau,\cdot)\Vl(s,{\tilde\kappa(\tau)})\;d\tau
				\end{aligned}
			\end{equation}
		\end{linenomath*}
		for $0 \leq t \leq T^*, \; C>0$.		
	\end{thm}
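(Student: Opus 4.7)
The plan is to mirror the first-order-system conjugation strategy used for the finite-loss Theorem \ref{result1} but with the symbolic calculus upgraded to handle infinite order. First, I factorize $P$ as $-(D_t - \lambda)(D_t + \lambda) + R$, where $\lambda(t,x,\xi) = \sqrt{a(t,x,\xi)}$ inherits the three zone-wise symbolic estimates (\ref{bound})--(\ref{B-up2}) at order $(1,1)$ instead of $(2,2)$, and $R$ is a lower-order remainder that can be absorbed into $b$. Setting $U = (\Lambda u, D_t u)^{T}$ for a suitable first-order weight $\Lambda$, this yields a system $D_t U = A(t,x,D_x) U + F$ whose symbol has its principal symmetric part in $\G(1,1)$ plus zone-dependent singular perturbations driven by $\tilde\theta(t)$, $\theta(t)/t$, and $(\theta(t)/t)^{2} e^{\psi(t)}$.

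Next I perform the conjugation $V = e^{\tilde\kappa(t)\Theta(x,D_x)} U$ with $\Theta$ defined in (\ref{Theta}), where the decreasing loss function $\tilde\kappa(t)$ is chosen so that $-\tilde\kappa'(t)$ matches the singular scale of the dominant perturbation; concretely, $-\tilde\kappa'(t)$ proportional to $\tilde\theta(t)/t^{1-\varepsilon}$ (or to $1$ in the bounded-$\tilde\theta$ case) reproduces the stated formula for $\tilde\kappa(t)$ after integrating from $t$ to $T_{0}$. The conjugated system reads
\begin{equation*}
D_{t} V = \bigl( A_{\tilde{g}}(t,x,D_x) + i\tilde\kappa'(t)\Theta(x,D_x) + R_{\tilde{g}}(t,x,D_x) \bigr) V + e^{\tilde\kappa(t)\Theta}F,
\end{equation*}
where $A_{\tilde{g}}$ has symbols in the metric-$\gt$ classes of Section \ref{Symbol classes} (with weight sequences $C'_{|\alpha|} K'_{|\beta|}$ satisfying (\ref{wgtseq})) and $R_{\tilde{g}}$ collects the remainders of the asymptotic expansions. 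The negative-definite term $i\tilde\kappa'(t)\Theta(x,D_x)$ is designed, after a sharp G\r{a}rding argument, to dominate the antisymmetric and singular lower-order perturbations in each of $Z_{int}(N)$, $Z_{mid}(N)$, $Z_{ext}(N)$.

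The energy estimate is then obtained by differentiating $\lVert V(t,\cdot)\rVert^{2}_{L^{2}}$ in $t$, splitting the integrand zone by zone, and using (\ref{symest1})--(\ref{symest3}) for the $\gt$-versions of the symbol classes. In $Z_{int}(N)$ the bound $t \leq N h(x,\xi)\theta(h(x,\xi))$ absorbs the $\theta(t)/t$ and $(\theta(t)/t)^{2} e^{\psi(t)}$ factors against the powers of $\Phi(x)\langle\xi\rangle_{k}$ stored in $\Theta$; in $Z_{mid}(N)$ the finer scale $\tilde t_{x,\xi}$ handles the second-derivative contribution and matches $\tilde\theta(t)$ with $\tilde\theta(h(x,\xi))$; in $Z_{ext}(N)$ the factors $\theta,\tilde\theta,\psi$ are effectively harmless, so the standard $\gt$-calculus closes the estimate. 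A sharp G\r{a}rding inequality applied to the nonnegative symbol arising from $-2\tilde\kappa'(t)\Theta$ minus the symmetrized singular remainder produces an $L^{2}$-inequality from which (\ref{est}) follows by Gronwall's lemma on $[0,T^{*}]$, once one observes that the restriction $T_{0} = \min\{\delta^{*}, \delta_{1}, \delta_{2}\}$ ensures both that $e^{\tilde\kappa(0)\Theta}$ acts continuously on the data class $\sobol(s+{(2-j)}e,\delta_{1})$ and on the source space $C([0,T^{*}]; \sobol(s,\delta_{2}))$, and that the calculus stays within the weight-sequence regime (\ref{wgtseq}).

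The main obstacle will be developing the composition and commutator calculus for operators whose symbols are tracked by weight sequences $C'_{|\alpha|} K'_{|\beta|}$ compatible with $e^{-\delta_{0}\Theta}$ decay of remainders: the asymptotic expansion for $e^{\tilde\kappa(t)\Theta} \circ A \circ e^{-\tilde\kappa(t)\Theta}$ is a formally infinite series of iterated commutators $[\Theta,[\Theta,\dots[\Theta,A]\dots]]$, whose convergence requires exploiting simultaneously the subadditivity of $\Theta$ in both $x$ and $\xi$, the strong uncertainty principle for $h_{g}$, and the bound (\ref{ineq3}) on the derivatives of $\vartheta$. This is the analog in the metric-$g_{\Phi,k}$ setting of the techniques used in \cite[Section 6.3]{nicRodi} and \cite{AscaCappi2}, and the selection of $\delta^{*}$ essentially reflects the radius of convergence of these series.
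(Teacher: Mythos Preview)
Your overall architecture---reduce to a first order system, conjugate by a loss operator $e^{\nu(t)\Theta}$, close by sharp G\r{a}rding and Gronwall---matches the paper, but two structural gaps would prevent the proof from closing as written.

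First, a single conjugation by $e^{\tilde\kappa(t)\Theta(x,D_x)}$ with $-\tilde\kappa'(t)\sim t^{-1+\varepsilon}$ is not enough. After reduction to a first order system the singular lower-order symbol is \emph{zone dependent}: it behaves like $\om\japxik\,\tilde\theta(t)$ in $Z_{int}(N)$, like $\theta(t)/t$ in $Z_{mid}(N)$, and like $(\P\japxik)^{-1}(\theta(t)/t)^{2}e^{\psi(t)}\tilde\theta(t)^{2}$ in $Z_{ext}(N)$. The term $-\tilde\kappa'(t)\Theta(x,\xi)=\tilde\kappa_1 t^{-1+\varepsilon}\vartheta(\P\japxik)$ is a \emph{scalar function of $t$ times a fixed function of $(x,\xi)$}; it cannot pointwise majorize all three profiles simultaneously (in $Z_{int}$ the singular symbol is of order $h(x,\xi)^{-1}$, not of order $\Theta$). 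The paper therefore conjugates in \emph{two} steps: first by $W_1$ with $\sigma(W_1)=\exp\big(\int_t^{T_1^*}\mathfrak{M}_1(r,x,\xi)\,dr\big)$, where $\mathfrak{M}_1$ is a zone-adapted majorant built exactly to dominate the singular symbol pointwise and whose \emph{time integral} is bounded by $\kappa_0\Theta(x,\xi)$ (this is equation (\ref{psi3}) and is the heart of the argument); only the residual error produced by Theorem \ref{conju}, of size $t^{-1+\varepsilon}\Theta$, is then absorbed by a second conjugation $W_2$ with $\sigma(W_2)=\exp\big(\int_t^{T_2^*}\kappa_1 r^{-1+\varepsilon}\Theta\,dr\big)$, which is what actually produces your $\tilde\kappa(t)$.

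Second, the scalar factorization $-(D_t-\lambda)(D_t+\lambda)+R$ skips what the paper treats as essential: a three-stage matrix diagonalization $\tilde{\mathcal N}_3\tilde{\mathcal N}_2\tilde{\mathcal N}_1$ whose purpose is to \emph{localize} the $\partial_t a$ singularity to $Z_{mid}$ and the $\partial_t^2 a$ singularity to $Z_{ext}$ before one ever tries to majorize them. Without that localization, the $(\theta(t)/t)^{2}e^{\psi(t)}$ contribution sits in all three zones and the key integral bound $\int_0^T|\partial_\xi^\alpha D_x^\beta\mathfrak{M}_1|\,dt\lesssim\Theta(x,\xi)$ fails. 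Relatedly, your phrase ``splitting the integrand zone by zone'' in the energy step is a misconception: the zones live in $(t,x,\xi)$, not in $t$ alone, so one cannot decompose the $L^2$ inner product by zone. The zones are used only at the symbol level to verify nonnegativity so that sharp G\r{a}rding applies globally.
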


	\section{Conjugation by an Infinite Order Pseudodifferential Operator}\label{CONJ}
		
		It is well-known in the theory of hyperbolic operators (see \cite{kn,CicoLor}) that the low-regularity in $t$ needs to be compensated by a higher regularity in the $x$ variable. This \enquote{balancing} operation is the key to understand the loss of regularity. It is clear (see Section \ref{Energy1} and $\ref{Energy2}$) that the conjugation is the operation that brings this balance and even encodes the quantity of the loss. 
		
		In the global setting, solutions experience an infinite loss of both derivatives and decay when (\ref{ineq2}) is satisfied. In order to overcome the difficulty of tracking a precise loss in our context we introduce a class of parameter dependent infinite order pseudodifferential operators of the form $e^{\nu(t) \Theta(x,D_x)}$ for the purpose of conjugation. These operators compensate, microlocally, the loss of regularity of the solutions. The operator $\Theta(x,D_x)$ explains the compensation for the singularity in $t$ and decides the quantity of the loss while the monotone continuous function $\nu(t)$ gives a scale for the loss. Hence, we call the conjugating operator as the \enquote{loss operator}. The form of the $\nu(t)$ is dealt in Sections \ref{Energy1} and \ref{Energy2}. 		
		The operators with loss of regularity (both derivatives and decay) are transformed to \enquote{good} operators by conjugation with such an infinite order operator. This helps us to derive \enquote{good} a priori estimates of solutions in the Sobolev space associated with the loss operator.
		
		In this section, we perform a conjugation of operators with symbols of the form (\ref{symadd}) by $e^{\nu(t) \Theta(x,D_x)}.$ Here we assume that $\nu(t)$ is a  continuous function for $t \in [0,T]$. When $e^{\nu(t) \Theta(x,D_x)}$ is an infinite order pseudodifferential operator, we need to consider an appropriate weight sequence so that the conjugation is well-defined. For this reason one can replace the generic contant $C_{\alpha\beta}$ appearing in the definitions of the symbols with $CC'_{|\alpha|} K'_{|\beta|}$ satisfying the condition (\ref{wgtseq}).

		The following proposition gives  an upper bound on the function $\nu(t)$ for the conjugation to be well defined.

	\begin{thm}\label{conju}
		Consider a symbol $a\var$ as in (\ref{symadd}) where the generic constant $C_{\alpha\beta}$ in the symbol estimates (\ref{symest1})- (\ref{symest3}) is replaced by $CC'_{|\alpha|} K'_{|\beta|}$ satisfying the condition (\ref{wgtseq}). Let $\nu = \nu(t) \in C([0,T]) \cap C^1((0,T])$. Then, there exists $\delta^*>0$ such that for $\nu(t) > 0$ with  $\nu(t)<\delta^*$,
		\begin{linenomath*}
			\begin{align}
				e^{\nu(t) \Theta(x,D_x)}a(t,x,D_x) e^{-\nu(t) \Theta(x,D_x)}  = a(t,x,D)+ \sum_{j=1}^{3} r_{\nu}^{(j)}(t,x,D_x),
			\end{align}
		\end{linenomath*}
		where $r_{\nu}^{(j)}(t,x,D_x),j=1,2,3,$ are such that 
		\begin{align*}
			\begin{rcases}
				\begin{aligned}
				\Theta(x,\xi)^{-1} r_{\nu}^{(1)}\var &\in L^{\infty}\left([0, T]; G^{-\infty,l_2^*-1}(\Phi,\gt) \right) \\
				\Theta(x,\xi)^{-1}r_{\nu}^{(2)} \var &\in L^{\infty}\left([0, T]; G^{l_1^*-1,-\infty}(\Phi,\gt)\right)
			\end{aligned}
		\end{rcases},&
		\quad \text{ if } \tilde\theta \text{ is bounded,}\\
			\begin{rcases}
				\begin{aligned}
					t^{1-\varepsilon}\Theta(x,\xi)^{-1}r_{\nu}^{(1)} \var &\in C\left([0, T]; G^{-\infty,l_2^*-1}(\Phi,\gt) \right)  \\
					t^{1-\varepsilon}\Theta(x,\xi)^{-1}r_{\nu}^{(2)} \var &\in C\left([0, T]; G^{l_1^*-1,-\infty}(\Phi,\gt) \right) 
				\end{aligned}
			\end{rcases},&
			\quad \text{ otherwise,}
		\end{align*}
		while $	\Theta(x,\xi)^{-1}r_{\nu}^{(3)}(t,x,\xi) \in L^{\infty}([0,T];G^{-\infty})$ for every $\varepsilon \in (0,1)$ and 
		$l_i^*= \max\{\tilde m_i,m_i'+m_3',m_i+m_3\}, i=1,2.$
	\end{thm}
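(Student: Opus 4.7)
The plan is to carry out the conjugation via an operator-valued ODE in a deformation parameter. Set
\[
A(\sigma) = e^{\sigma \nu(t)\Theta(x,D_x)}\, a(t,x,D_x)\, e^{-\sigma \nu(t)\Theta(x,D_x)}, \quad \sigma \in [0,1],
\]
so that $\partial_\sigma A = \nu(t)\,[\Theta(x,D_x), A(\sigma)]$ with $A(0) = a(t,x,D_x)$. Iteration yields the formal Volterra series
\[
A(1) = a(t,x,D_x) + \sum_{k \geq 1} \frac{\nu(t)^k}{k!}\,\mathrm{ad}_{\Theta}^{\,k}(a),
\]
where $\mathrm{ad}_\Theta(b) := [\Theta(x,D_x),\, b(t,x,D_x)]$. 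The theorem then reduces to three tasks: (i) compute the symbol of the single commutator $[\Theta(x,D_x), a_j(t,x,D_x)]$ for each of the three zonal pieces $a_j$ in (\ref{symadd}); (ii) iterate and control the sum in $k$; (iii) group the output into the three remainder types $r_\nu^{(1)}, r_\nu^{(2)}, r_\nu^{(3)}$.

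For (i), I would apply the Kohn--Nirenberg composition formula, as set up in the Appendix for our parameter-dependent classes, to get
\[
\sigma\bigl([\Theta, a]\bigr)\var \sim \sum_{|\alpha| \geq 1} \frac{1}{\alpha!}\Bigl(\partial_\xi^\alpha \T \cdot D_x^\alpha a - \partial_\xi^\alpha a \cdot D_x^\alpha \T\Bigr).
\]
At order $|\alpha|=1$, the term $\partial_\xi \T \cdot D_x a$ gains one factor of $\japxik^{-1}$ from $\partial_\xi\T$ and one of $\P^{-1}$ from $D_x a$; since $\om \lesssim \P$, one may trade $\P^{-1}$ for $\om^{-1}$, so this term lies in $\GT(m_1-1,m_2-1)$ once the $\T$-factor has been absorbed into the $\gt$ metric. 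Iterating the bracket $\ell$ times reduces both orders by $\ell$, and these contributions accumulate across all three zones into the remainder $r_\nu^{(1)} \in \GT(-\infty, l_2^*-1)$; here $l_i^* = \max\{\tilde m_i, m'_i + m'_3, m_i + m_3\}$ records the worst-case effective zonal order, with the $m'_3, m_3$ shifts arising because the singular time factors $(\theta(t)/t)^{m_3'}$ and $(\theta(t)/t)^{m_3}$ are absorbed into symbol estimates via the lower bounds $t > t_{x,\xi}$ and $t > \tilde t_{x,\xi}$ that define $\zmid(N)$ and $\zext(N)$. The symmetric piece $D_x \T \cdot \partial_\xi a$ builds up $r_\nu^{(2)} \in \GT(l_1^*-1, -\infty)$ analogously, while the higher-order terms of the oscillatory-integral expansion are smoothing in both variables and collect into $r_\nu^{(3)} \in G^{-\infty}$.

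Convergence of the Volterra series is where the smallness $\nu(t) < \delta^*$ and the weight-sequence hypothesis (\ref{wgtseq}) are decisive. Each additional commutator produces combinatorial factors $C'_{|\alpha|} K'_{|\beta|}$ from derivatives of $\T$ and $a$, and the bound
\[
\inf_{j \in \mathbb{N}} \frac{C'_j K'_j}{(\P \japxik)^j} \lesssim e^{-\delta_0 \T}
\]
supplies a geometric control $(\nu(t)\delta_0^{-1})^k$ on the $k$-th iterate, so any $\delta^* < \delta_0^{-1}$ works. The sub-additivity and sub-multiplicativity of $\T$ in each variable separately are precisely what keep the symbol class closed under composition at every step. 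For the $t$-dependence, in $\zint(N)$ the relevant factor is $\tilde\theta(t)^{m_3}$, which is $L^\infty_t$ exactly when $\tilde\theta$ is bounded; in $\zmid(N)$ and $\zext(N)$, the singular factors $(\theta(t)/t)^{m_3'}$ and $(\theta(t)/t)^{m_3} e^{m_4\psi(t)}$ are only continuous up to $t=0$ after multiplication by $t^{1-\varepsilon}$ (by (\ref{ineq3}), $\theta$ is of logarithmic type), which produces the bifurcation in the conclusion. The main obstacle I anticipate is the bookkeeping of this infinite-order expansion across all three zones simultaneously: showing that the $1/k!$ combinatorial gain together with the weight-sequence suppression absorbs the growth of the iterated composition remainders while delivering the correct zonal membership of each $r_\nu^{(j)}$ after resummation, which requires a careful interplay between the infinite-order calculus of Section \ref{Symbol classes} and the Planck-function zone decomposition of Section \ref{zones}.
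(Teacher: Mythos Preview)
Your commutator/Volterra series is a genuinely different route from the paper's. The paper writes the full conjugated symbol as a double oscillatory integral, Taylor-expands each exponential $e^{\pm\nu\Theta}$ to \emph{first} order about $(x,\xi)$, and defines $r_\nu^{(1)},r_\nu^{(2)},r_\nu^{(3)}$ as the three resulting cross terms (constant times $y$-remainder; $(\zeta+\eta)$-remainder times constant; remainder times remainder). The $G^{-\infty}$ decay in $\xi$ for $r_\nu^{(1)}$ is then obtained in one stroke: inside the oscillatory integral one inserts $y^{-\kappa}y^{\kappa}e^{-iy\cdot\eta}$, integrates by parts so that $D_\eta^\kappa$ lands on $a(t,x+z,\xi+\eta)$, and then \emph{optimizes over the free multi-index} $\kappa$ to invoke (\ref{wgtseq}) and extract a factor $e^{-\delta_0\Theta(y,\xi)}$. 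Sub-additivity of $\Theta$ in $x$ absorbs the conjugation exponential $E_1=e^{\nu(\Theta(x,\xi)-\Theta(x+\theta_1 y,\xi))}$ provided $\nu<\delta_0/2=:\delta^*$, and the residual $e^{-\delta_0'(\ln\japxik)^{\varrho_1}}$ is what produces arbitrary $\japxik$-decay while leaving the $\Phi$-order at $l_2^*-1$.

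Your plan has a genuine gap precisely here. Each iterated bracket $\mathrm{ad}_\Theta^k(a)$ carries a \emph{fixed} number of derivatives, so its leading symbol drops order by $k$ in $\xi$ \emph{and} by $k$ in $\Phi$ simultaneously; the Volterra sum is therefore dominated by its first term $\nu[\Theta,a]$, which lies only in $\Theta\cdot G^{l_1^*-1,l_2^*-1}(\Phi,\gt)$. No regrouping of the series into ``$\partial_\xi\Theta\cdot D_x a$'' versus ``$D_x\Theta\cdot\partial_\xi a$'' pieces places either piece into $G^{-\infty}$ in a single variable---both pieces are of finite joint order at every stage, and summation does not improve this. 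Your use of (\ref{wgtseq}) is also inverted: that hypothesis is an infimum over $j$, exploitable only when one has a free index to optimize (as the paper does with $\kappa$ inside the integral), not a term-by-term bound on a series whose $k$-th summand involves exactly $k$ derivatives; it therefore does not yield the ``geometric control $(\nu\delta_0^{-1})^k$'' you claim. To recover the specific three-way decomposition of the theorem from the commutator route you would have to resum the series back into the kernel and re-expand as an oscillatory integral, at which point you have reproduced the paper's argument.
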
  
	
	To prove Theorem \ref{conju}, we need the following lemma, which can be
	given an inductive proof.
	
	\begin{lem}
		Let $\delta \neq 0$. Then, for every $\alpha,\beta \in \mathbb{Z}^n_+$, we have
		\begin{linenomath*}
			\[
			\partial_x^\beta\partial_\xi^\alpha e^{\delta \Theta(x,\xi)} \leq (C\delta)^{|\alpha|+|\beta|}  \alpha! \beta!  e^{\delta \Theta(x,\xi)} \P^{-|\beta|} \japxik^{-|\alpha|} \Theta(x,\xi)^{\vert \alpha \vert + \vert \beta \vert}.
			\]
		\end{linenomath*}
	\end{lem}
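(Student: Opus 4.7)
The plan is to reduce the estimate on the exponential to an estimate on mixed derivatives of $\Theta$ itself, and then carry out an induction on $N := |\alpha|+|\beta|$, appealing to the Leibniz rule.

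As a preliminary step, I would prove the auxiliary bound
\begin{linenomath*}
\[
|\partial_x^\beta \partial_\xi^\alpha \Theta(x,\xi)| \leq \tilde{C}^{|\alpha|+|\beta|}\, \alpha!\, \beta!\, \Theta(x,\xi)\, \Phi(x)^{-|\beta|}\, \japxik^{-|\alpha|}
\]
\end{linenomath*}
on the inner function. Since $\Theta = \vartheta(\Phi(x)\japxik)$, I would apply Fa\`a di Bruno's formula: each term arising from the composition is a factor $\vartheta^{(j)}(\Phi\japxik)$ multiplied by products of mixed derivatives of the inner argument $\Phi(x)\japxik$. The hypothesis (\ref{ineq3}) yields $|\vartheta^{(j)}(r)| \leq C_j\, \vartheta(r)/r^j$; combined with the assumed symbol property $|\partial_x^\beta \Phi(x)| \lesssim \Phi(x)\japx^{-|\beta|} \lesssim \Phi(x)^{1-|\beta|}$ (using $\japx^{-1}\lesssim \Phi(x)^{-1}$ from $\Phi\lesssim \japx$) and the standard $|\partial_\xi^\alpha \japxik| \lesssim \japxik^{1-|\alpha|}$, the powers of $\Phi\japxik$ telescope so that each partition contribution has the exact weights $\Phi^{-|\beta|}\japxik^{-|\alpha|}$. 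The combinatorial part of Fa\`a di Bruno produces the $\alpha!\,\beta!$ factor together with a geometric constant to the $(|\alpha|+|\beta|)$-th power.

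For the lemma itself, I would induct on $N$. The base case $N=0$ is trivial. In the inductive step, assuming without loss of generality $\beta_j \geq 1$ and writing $\beta = \beta' + e_j$, I use $\partial_{x_j}e^{\delta\Theta} = \delta(\partial_{x_j}\Theta)\, e^{\delta\Theta}$ and then apply the Leibniz rule to distribute $\partial_x^{\beta'}\partial_\xi^\alpha$ between the factor $\delta\,\partial_{x_j}\Theta$ and the factor $e^{\delta\Theta}$. Each resulting term is estimated by the auxiliary bound on the $\Theta$-factor and by the inductive hypothesis on the $e^{\delta\Theta}$-factor, whose order is strictly smaller. Using $\Theta \geq 1$, the two powers of $\Theta$ combine into $\Theta^{N+1}$ or less, and the weights $\Phi^{-|\beta|}\japxik^{-|\alpha|}$ reconstitute themselves exactly by $(-|\beta''|-1) + (-|\beta'-\beta''|) = -|\beta|$ and similarly for $\alpha$. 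A short combinatorial check, based on $\binom{\alpha}{\alpha'}\alpha'!(\alpha-\alpha')! = \alpha!$ and the analogous identity for $\beta$ together with the bound $\beta_j''+1 \leq |\beta|$, lets the Leibniz sum be absorbed into the constant $C^{N+1}$ after multiplication by $\alpha!\,\beta!$.

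The main obstacle, as is typical for Fa\`a di Bruno/Gevrey-style estimates, is the combinatorial bookkeeping: ensuring that the Leibniz sum collapses to $\alpha!\,\beta!$ times a single geometric constant raised to the $(N+1)$-th power, without accumulating spurious extra factorials, and that the parameter $\delta$ combines into $(C\delta)^{N+1}$ rather than a polynomial in $\delta$ of mixed degree. This is the step where the strong-uncertainty-type estimate (\ref{ineq3}) is genuinely used: it is precisely what turns each $\vartheta^{(j)}$ into a controlled multiple of $\vartheta$, so that the iterated differentiation does not produce a non-Gevrey growth in the resulting constants.
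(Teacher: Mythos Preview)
Your proposal is correct and matches the paper's approach: the paper merely says the lemma ``can be given an inductive proof'' without further detail, and your outline---first bounding the derivatives of $\Theta$ via Fa\`a di Bruno and estimate (\ref{ineq3}), then inducting on $|\alpha|+|\beta|$ with Leibniz---is exactly the natural way to carry this out. One small caveat worth making explicit in the write-up: the mixed $\delta$-powers you flag are genuinely present (the Leibniz expansion produces $\delta^j$ with $1\le j\le |\alpha|+|\beta|$), so the constant $C$ in the stated bound necessarily absorbs a factor of $\max(1,|\delta|^{-1})$; this is harmless in the application since $\nu(t)$ is used for a fixed range $(0,\delta^*)$, but it should be said rather than left implicit.
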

	
	\begin{proof}[Proof of Theorem \ref{conju}]
		Throughout this proof we write $\nu$ in place of $\nu(t)$ for the sake of simplicity of the notation. 
		Let $a_{\nu,\Theta}(t,x,\xi)$ be the symbol of the operator
		\begin{linenomath*}
			\[
			\exp\{\nu \Theta(x,D_x)\}a(t,x,D_x) \exp\{-\nu \Theta(x,D_x)\}.
			\] 
		\end{linenomath*}
		Then  $a_{\nu,\Theta}(t,x,\xi)$ can be written in the form of an oscillatory integral as follows:
		\begin{linenomath*}
			\begin{align}\label{conj}
				a_{\nu,\Theta}(t,x,\xi)= \int \dots \int e^{-iy\cdot\eta}e^{-iz\cdot\zeta}&e^{\nu \Theta(x,\xi+\zeta+\eta)} a(t,x+z,\xi+\eta) \\
				& \times e^{-\nu \Theta(x+y,\xi)}dz\textit{\dj}\zeta dy\textit{\dj}\eta, \nonumber
			\end{align}
		\end{linenomath*}
		Taylor expansions of $\exp\{\nu \T\}$ in the first and second variables, respectively, are
		\begin{linenomath*} 
			\begin{align*}
				e^{-\nu \Theta(x+y,\xi)} &= e^{-\nu \Theta(x,\xi)} + \sum_{j=1}^{n} \int_{0}^{1}y_j \partial_{w_j'}e^{-\nu \Theta(w',\xi)} \Big|_{w'=x+\theta_1y}d\theta_1, \text{ and }\\
				e^{\nu \Theta(x,\xi+\zeta+\eta)} &= e^{\nu \Theta(x,\xi)} + \sum_{i=1}^{n} \int_{0}^{1}(\zeta_i + \eta_i) \partial_{w_i}e^{\nu \Theta(x,w)} \Big|_{w=\xi+\theta_2(\eta+\zeta)}d\theta_2.		
			\end{align*}
		\end{linenomath*}
		We can write $a_{\nu, \Theta}$ as 
		\begin{linenomath*}
			\[
			a_{\nu, \Theta}(t,x,\xi)=a(t,x,\xi) + \sum_{l=1}^{3} r_{\nu}^{(l)}(t,x,\xi)\quad \text{ where }
			\]	
		\end{linenomath*}
		\begin{linenomath*}	 
			\[
			r_{\nu}^{(l)}(x,\xi) = \int \cdots \int e^{-iy\cdot\eta}e^{-iz\cdot\zeta} I_l a(t,x+z,\xi+\eta) dz\textit{\dj}\zeta dy\textit{\dj}\eta,
			\]	
		\end{linenomath*}	
		and $I_l$, $l=1,2,3$ are as follows:
		\begin{equation*}
			\begin{aligned}
			I_1 &= e^{\nu \Theta(x,\xi)} \sum_{j=1}^{n} \int_{0}^{1}y_j \partial_{w_j'}e^{-\nu \Theta(w',\xi)} \Big|_{w'=x+\theta_1y}d\theta_1,\\
			I_2 &=  e^{-\nu \Theta(x,\xi)}  \sum_{i=1}^{n} \int_{0}^{1}(\zeta_i+ \eta_i) \partial_{w_i}e^{\nu \Theta(x,w)} \Big|_{w=\xi+\theta_2(\zeta + \eta)}d\theta_2,\\
			I_3 &= 
			\Bigg(\sum_{i=1}^{n} \int_{0}^{1}(\zeta_i+\eta_i) \partial_{w_i}e^{\nu \Theta(x,w)} \Big|_{w=\xi+\theta_2(\zeta+\eta)}d\theta_2\Bigg)\\
			& \qquad \times \Bigg(\sum_{j=1}^{n} \int_{0}^{1}y_j \partial_{w'_j}e^{-\nu \Theta(w',\xi)}\Big|_{w'=x+\theta_1y}d\theta_1\Bigg).
			\end{aligned}
		\end{equation*}
		
		Denote the indicator functions for the regions $\zint(N)$, $\zmid(N)$ and $ \zext(N)$ by $\chi_1$, $\chi_2$ and  $\chi_3,$respectively. Let $m_1^* = \max\{\tilde m_1, m_1', m_1\}$ and $m_2^* = \max\{\tilde m_2,m_2',m_2\}$.
		
		We will now determine the growth estimate for $r^{(1)}_\nu(t,x,\xi)$ using integration by parts. For $\alpha, \beta,\kappa \in (\mathbb{Z}^+_0)^n$ and $l \in \mathbb{Z}^+$ we have			
		\begin{linenomath*}
			\[
			\begin{aligned}
				&\partial_\xi^\alpha\partial_x^\beta r^{(1)}_\nu(t,x,\xi)\\
				&= \sum_{j=1}^{n} \sum_{\tiny{\beta'+ \beta'' \leq \beta}}\sum_{\scriptsize{\alpha'+\alpha'' \leq \alpha}} \int \dots \int y^{-\kappa} y^{\kappa} e^{-iy\cdot\eta}e^{-iz\cdot\zeta} 
				(\partial_\xi^{\alpha'} \partial_x^{\beta'} D_{\xi_j}a)(t,x+z,\xi+\eta)\\	
				&\quad\times \int_{0}^{1} \partial_\xi^{\alpha''}\partial_x^{\beta''}\partial_{w'_j}	e^{\nu (\T- \Theta(w',\xi))} 	\Big|_{w'=x+\theta_1y} d\theta_1 dz\textit{\dj}\zeta dy\textit{\dj}\eta\\
				&= \sum_{j=1}^{n} \sum_{\tiny{\beta'+ \beta'' \leq \beta}}\sum_{\scriptsize{\alpha'+\alpha'' \leq \alpha}} \int \dots \int y^{-\kappa} e^{-iy\cdot\eta}e^{-iz\cdot\zeta} \la \eta \ra_k ^{-2l} \la y\rak^{-2l}\la z \rak ^{-2l}  \la D_\zeta\rak^{2l} \la \zeta \rak^{-2l}  \\
				&\quad\times \la D_\eta\rak^{2l}\la D_z\rak^{2l} D_\eta^\kappa (\partial_\xi^{\alpha'} \partial_x^{\beta'} D_{\xi_j}a)(t,x+z,\xi+\eta)\\	
				&\quad\times \int_{0}^{1} \la D_y \rak^{2l} \partial_\xi^{\alpha''}\partial_x^{\beta''}\partial_{w'_j}	e^{\nu (\T- \Theta(w',\xi))} 	\Big|_{w'=x+\theta_1y} d\theta_1 dz\textit{\dj}\zeta dy\textit{\dj}\eta.
			\end{aligned}
			\]
		\end{linenomath*}
		Using the easy to show inequality $\Phi(x+z)^r \leq 2^{|r|} \P^r \Phi(z)^{|r|}, \forall r \in \R,$ 
		\[
			\begin{aligned}
				 \partial_{w'_j}e^{\nu (\T- \Theta(w',\xi))} \Big|_{w'=x+\theta_1y} & \leq E_1(t,x,y,\xi) \Phi(x+\theta_1y)^{-1}\Theta(x+\theta_1y,\xi)\\
				 & \leq C E_1(t,x,y,\xi)\P^{-1} \Phi(y) \T \Theta(y,\xi).
			\end{aligned}
		\]
		where $E_1(t,x,y,\xi) = \exp\{\nu (\T- \Theta(x+\theta_1y,\xi))\}$.  From the following  the estimation
		
		\[
			\begin{aligned}
				\Big|\sum_{j=0}^{2l} \partial_{y_i}^j e^{\nu (\T- \Theta(w',\xi))} \Big|_{w'=x+\theta_1y} \Big| 
				& \leq CE_1(t,x,y,\xi)  \sum_{j=0}^{2l}  \left(\frac{\Theta(x+\theta_1y,\xi)}{\Phi(x+\theta_1y)}\right)^{j}\\
				&\leq CE_1 \sum_{j=0}^{2l} \left(\frac{(\ln(2\Phi(x+\theta_1y)\japxik))^{\varrho_2}}{\Phi(x+\theta_1y)}\right)^{j}\\
				&= C E_1\sum_{j=0}^{2l} \left(   \frac{(\ln(2\Phi(x+\theta_1y)))^j}{\Phi(x+\theta_1y)^{j/\varrho_2}} + \frac{(\ln\japxik)^j}{\Phi(x+\theta_1y)^{j/\varrho_2}}  \right)^{\varrho_2} \\
				&\leq C E_1 (\ln \japxik)^{2\varrho_2 l},
			\end{aligned}
		\]	
		we have 
		\[
			\la D_y \rak^{2l} E_1(t,x,y,\xi) \leq C E_1(t,x,y,\xi) (\ln \japxik)^{2\varrho_2 l}.
		\]
		Let 
		$$
			\begin{aligned}
				G(t,x,\xi) & = \chi_1 \om^{\tilde{m}_2}  \japxik^{\tilde{m}_1} \tilde\theta(t)^{\tilde{m}_3} 
				+ \chi_2 \om^{{m}_2'} \japxik^{{m}_1'} \left(\frac{\theta(t)}{t}\right)^{m_3'} \tilde{\theta}(t)^{m_4'+m_5'(|\alpha|+|\beta|+1+4l) }\\
				  & \quad + \chi_3 \om^{{m}_2} \japxik^{{m}_1} \left(\frac{\theta(t)}{t}\right)^{m_3} e^{m_4 \psi(t)}  \tilde{\theta}(t)^{m_5+m_6(|\alpha|+|\beta|+1+4l)} 
			\end{aligned}
		$$
		Note that for $|y| \geq 1$ we have $\la y \ra \leq \sqrt{2}|y|$ and in the case $|y| <1$ we have $\la y\ra < \sqrt{2}$. Using these estimates along with the fact that $\la y \ra^{-|\kappa|} \leq \Phi(y)^{-|\kappa|}$ we have
		\begin{linenomath*}
			\[
			\begin{aligned}
				|\partial_\xi^\alpha\partial_x^\beta r^{(1)}_\nu(t,x,\xi)|&\leq CC'_{|\alpha|+1} K'_{|\beta|+1} \T^{1+\vert \alpha \vert+\vert \beta \vert} \P^{-1-|\beta|} \japxik^{-1-|\alpha|} G(t,x,\xi)\\
				& \quad  \times \sum_{\tiny{\beta'+ \beta'' \leq \beta}}\sum_{\scriptsize{\alpha'+\alpha'' \leq \alpha}}  \int \dots \int \omega(z) ^{|m_2'|} \Phi(z)^{|\beta'|} \Phi(y)^{1+|\beta''|} \\
				& \quad \times  \Theta(y,\xi)^{1+|\beta''| + |\alpha''|} \la \eta \rak^{|m_1'-1-|\alpha'||+ |\kappa| -2l}  (\ln(\japxik))^{2\varrho_2 l} \tilde\theta(t)^{\kappa}\\ 
				& \quad \times   \frac{C'_{|\kappa|} K'_{|\kappa|}}{ (\Phi(y)\japxik)^{|\kappa|} } E_1(t,x,y,\xi) \la z \rak^{-2l}  \la y \rak^{-2l} \la \zeta \rak^{-2l}dz\textit{\dj}\zeta dy\textit{\dj}\eta.
			\end{aligned}
			\]
		\end{linenomath*}
		Given $\alpha, \beta$ and $\kappa$, we choose $l$ such that $2l > n + \max\{m_1^*,m_2^*\}+|\alpha|+|\beta|+ |\kappa|$. So that
		\begin{linenomath*}
			\[
			\begin{aligned}
				|\partial_\xi^\alpha\partial_x^\beta r^{(1)}_\nu(t,x,\xi)| & \leq  CC'_{|\alpha|+1} K'_{|\beta|+1} \T^{1+\vert \alpha \vert+\vert \beta \vert} \P^{-1-|\beta|} \japxik^{-1-|\alpha|} G(t,x,\xi)  \\
				& \quad \times \tilde\theta(t)^{\kappa} \int  \Big( (\ln(\japxik))^{2\mu l} \Theta(y,\xi)^{1+|\beta''| + |\alpha''|}  \\
				& \quad \times   \frac{C'_{|\kappa|} K'_{|\kappa|}}{ (\Phi(y)\japxik)^{|\kappa|} } E_1(t,x,y,\xi) \Big) dy.
			\end{aligned}
			\]
		\end{linenomath*}
		Noting the inequality (\ref{wgtseq}), we have $  (\ln(\japxik))^{2\varrho_2 l}  \Theta(y,\xi)^{1+|\beta''| + |\alpha''|} e^{-\delta_0 \Theta(y,\xi)} \leq C e^{-\frac{\delta_0}{2} \Theta(y,\xi)}$. Thus,
		\begin{linenomath*}
			\[
			\begin{aligned}
				|\partial_\xi^\alpha\partial_x^\beta r^{(1)}_\nu(t,x,\xi)| & \leq  CC'_{|\alpha|+1} K'_{|\beta|+1} \T^{1+\vert \alpha \vert+\vert \beta \vert} \P^{-1-|\beta|} \japxik^{-1-|\alpha|} G(t,x,\xi)\\
				 & \quad \times  \int  \exp\Big\{\nu(\T- \Theta( x+\theta_1y,\xi)) - \frac{\delta_0}{2} \Theta(y,\xi)\Big\} dy.
			\end{aligned}
			\]
		\end{linenomath*}
		Since
		\begin{linenomath*}
			\begin{equation}\label{Pineq}
				\begin{aligned}
					\Theta(x,\xi) - \Theta(x+\theta_1y,\xi) &\leq \T - (\T-\Theta(\theta_1y,\xi)) \\
					&\leq \Theta(\theta_1 y,\xi)\leq\Theta(y,\xi),
				\end{aligned}  			
			\end{equation}
		\end{linenomath*} 		
		and $\delta_0$ is independent of $\nu$, there exists $\delta^*_1>0$ (in fact $\delta^*_1= \frac{\delta_0}{2}$) such that, for $\nu(t) <\delta^*_1$ and $\varrho_1$ as in (\ref{ineq2}) we obtain the estimate
			\begin{linenomath*}
			\begin{equation}\label{rem}
			\begin{aligned}
				|\partial_\xi^\alpha\partial_x^\beta r^{(1)}_\nu(t,x,\xi)| & \leq  CC'_{|\alpha|+1} K'_{|\beta|+1} \T^{1+\vert \alpha \vert+\vert \beta \vert} \P^{-1-|\beta|} \japxik^{-1-|\alpha|} G(t,x,\xi) \\
				& \quad  \times  e^{-\delta_0'(\ln\japxik)^{\varrho_1} }, \quad \delta_0'>0.
			\end{aligned}
			\end{equation}
		\end{linenomath*}
		In view of Propositions (\ref{p1})-(\ref{p3}), we have
		\begin{alignat*}{3}
			\Theta(x,\xi)^{-1}r_{\nu}^{(1)} &\in L^{\infty}\left([0, T]; G^{-\infty,l_2^*-1}(\Phi,\gt) \right), && \qquad \text{ if } \tilde\theta \text{ is bounded}\\
			t^{1-\varepsilon}\Theta(x,\xi)^{-1}r_{\nu}^{(1)} &\in C\left([0, T]; G^{-\infty,l_2^*-1}(\Phi,\gt)\right), && \qquad \text{ otherwise}
		\end{alignat*}
		for $l_2^*= \max\{\tilde m_2,m_2'+m_3',m_2+m_3\}.$

		In a similar fashion, we will determine the growth estimate for $r^{(2)}_\nu(t,x,\xi)$. Let $\alpha, \beta,\kappa \in (\mathbb{Z}^+_0)^n$ and $l \in \mathbb{Z}^+$. Then		
				
		\begin{linenomath*}
			\[
			\begin{aligned}
				&\partial_\xi^\alpha\partial_x^\beta r^{(2)}_\nu(t,x,\xi)\\
				&= \sum_{i=1}^{n} \sum_{\tiny{\beta'+ \beta'' \leq \beta}}\sum_{\scriptsize{\alpha'+\alpha'' \leq \alpha}} \int \dots \int \eta^{-\kappa} \eta^\kappa e^{-iy\cdot\eta} \zeta^{-\kappa} \zeta^\kappa e^{-iz\cdot\zeta} \la z \rak ^{-2l} \la \eta \rak^{-2l}\la D_y\rak^{2l} \\
				&\quad\times  \la y \rak^{-2l} \la \zeta \rak^{-2l}  \la D_z\rak^{2l} \la D_\zeta\rak^{2l}   \la D_\eta\rak^{2l} (\partial_\xi^{\alpha'} \partial_x^{\beta'} D_{x_i}a)(t,x+z,\xi+\eta)\\
				& \quad \times\int_{0}^{1} \partial_\xi^{\alpha''}\partial_x^{\beta''}\partial_{w_i} e^{\nu (\Theta(x,w)-\Theta(x,\xi))}\Big|_{w=\xi+\theta_2(\eta+\zeta)}	
				d\theta_2 dz\textit{\dj}\zeta dy\textit{\dj}\eta,\\
				&= \sum_{i=1}^{n} \sum_{\tiny{\beta'+ \beta'' \leq \beta}}\sum_{\scriptsize{\alpha'+\alpha'' \leq \alpha}} \int \dots \int \eta^{-\kappa} e^{-iy\cdot\eta} \zeta^{-\kappa} e^{-iz\cdot\zeta} D_y^\kappa D_z^\kappa \la z \rak ^{-2l} \la \eta \rak^{-2l} \la D_y\rak^{2l} \\
				&\quad\times \la y \rak^{-2l}  \la \zeta \rak^{-2l}  \la D_z\rak^{2l}  \la D_\zeta\rak^{2l}   \la D_\eta \rak^{2l} (\partial_\xi^{\alpha'} \partial_x^{\beta'} D_{x_i}a)(t,x+z,\xi+\eta)\\
				& \quad \times\int_{0}^{1} \partial_\xi^{\alpha''}\partial_x^{\beta''}\partial_{w_i} e^{\nu (\Theta(x,w)-\Theta(x,\xi))}\Big|_{w=\xi+\theta_2(\eta+\zeta)}	
				d\theta_2 dz\textit{\dj}\zeta dy\textit{\dj}\eta.
			\end{aligned}
			\]	
		\end{linenomath*}
		Let $E_2(t,x,\xi,\eta,\zeta) = \exp\{\nu( \Theta(x,\xi+\theta_2(\eta+\zeta))-\T)\}$.	
		We have	
		\begin{linenomath*}
			\[
			\begin{aligned}
				|\partial_\xi^\alpha\partial_x^\beta r^{(2)}_\nu(t,x,\xi)| & \leq  CC'_{|\alpha|+1} K'_{|\beta|+1} \T^{1+\vert \alpha \vert+\vert \beta \vert} \P^{-1-|\beta|} \japxik^{-1-|\alpha|} G(t,x,\xi) \\
				& \quad \times \sum_{\tiny{\beta'+ \beta'' \leq \beta}}\sum_{\scriptsize{\alpha'+\alpha'' \leq \alpha}} \int \dots \int \omega(z)^{|m_2'|} \Phi(z)^{|\beta'|+1} \la \eta \rak^{|m_1'-|\alpha'||}\\
				& \quad \times
				 \la \eta \rak^{-2l} \la \zeta \rak^{-2l}\la \eta + \zeta \rak^{1+|\alpha''|} \left(\frac{C'_{|\kappa|}K'_{|\kappa|}}{(\P \la \zeta \rak \la \eta \rak)^{|\kappa|}}\right) \tilde\theta(t)^{\kappa}\\		
				& \quad \times \la z \rak^{-2l+|\kappa|} \la y\rak^{-2l-|\kappa|} E_2(t,x,\xi,\eta,\zeta) dz\textit{\dj}\zeta dy\textit{\dj}\eta,
			\end{aligned}
			\]
		\end{linenomath*}
		where we have noted estimate
		\[
			\la D_{\zeta}\rak^{2l} \la D_{\eta}\rak^{2l} E_2(t,x,\xi,\eta,\zeta) \leq C E_2(t,x,\xi,\eta,\zeta) (\ln\P)^{4\varrho_2l},
		\]
		as in the case of $r^{(1)}_\nu$.
		In this case we choose $l$ such that $2l > 2(n+1) + \max\{m_1^*,m_2^*\}+|\alpha|+|\beta|+ |\kappa|$. 			
		Noting $(\la \eta \rak \la \zeta \rak)^{-1} \leq \la \zeta + \eta \rak^{-1}$ and $(\ln\P)^{4\varrho_2l}   e^{-\delta_0\Theta(x,\eta+\zeta)} \leq e^{-\frac{\delta_0}{2}\Theta(x,\eta+\zeta)},$ from (\ref{wgtseq}) we get	
		\begin{linenomath*}
			\[
			\begin{aligned}
				|\partial_\xi^\alpha\partial_x^\beta r^{(2)}_\nu(t,x,\xi)| 
				&\leq  CC'_{|\alpha|+1} K'_{|\beta|+1} \T^{1+\vert \alpha \vert+\vert \beta \vert} \P^{-1-|\beta|} \japxik^{-1-|\alpha|} \\
				& \quad \times G(t,x,\xi) \tilde\theta(t)^{\kappa} \int \int
				(\la \eta \rak \la \zeta \rak )^{-2(n+1)} \\
				& \quad \times \exp\left\{ \nu(\Theta(x,\xi+\eta+\zeta) - \T)-\frac{\delta_0}{2}\Theta(x,\eta+\zeta)) \right\} \textit{\dj}\zeta \textit{\dj}\eta.
			\end{aligned}
			\] 
		\end{linenomath*}	
		Since $\Theta(x,\xi+\eta+\zeta) - \T \leq \Theta(x,\eta+\zeta)$ and
		and $\delta_0$ is independent of $\nu$, there exists $\delta^*_2>0$ (in fact $\delta^*_2= \frac{\delta_0}{2}$) such that, for $\nu(t) <\delta^*_2$ we obtain the estimate
		\begin{linenomath*}
			\[
			\begin{aligned}
				|\partial_\xi^\alpha\partial_x^\beta r^{(2)}_\nu(t,x,\xi)| & \leq  CC'_{|\alpha|+2} K'_{|\beta|+1} \T^{1+\vert \alpha \vert+\vert \beta \vert} \P^{-1-|\beta|} \japxik^{-1-|\alpha|} G(t,x,\xi) \\
				& \quad \times e^{-\delta_0''(\ln \P)^{\varrho_1} }, \quad \delta_0'' >0.
			\end{aligned}
			\]
		\end{linenomath*}	
		In view of Propositions (\ref{p1})-(\ref{p3}), we have
		\begin{alignat*}{3}
			\Theta(x,\xi)^{-1}r_{\nu}^{(2)} &\in L^{\infty}\left([0, T]; G^{l_1^*-1,-\infty}(\Phi,\gt) \right), && \qquad \text{ if } \tilde\theta \text{ is bounded}\\
			t^{1-\varepsilon}\Theta(x,\xi)^{-1}r_{\nu}^{(2)} &\in C\left([0, T]; G^{l_1^*-1,-\infty}(\Phi,\gt)\right), && \qquad \text{ otherwise}
		\end{alignat*}
		for $l_1^*= \max\{\tilde m_1,m_1'+m_3',m_1+m_3\}.$
			
		By similar techniques used in the case of $ r^{(1)}_\nu$ and $ r^{(2)}_\nu$, one can show that $r^{(3)}_\nu \in C([0,T]; G^{-\infty})$. Taking $\delta^{*} = \min\{\delta^{*}_1, \delta^{*}_2\}$, proves the theorem.  
	\end{proof}
	
	\begin{rmk}
		When $\tilde\theta$ is bounded, we get a more refined symbol estimate for the remainder $r_\nu^{(1)}$ as seen from (\ref{rem}), suggesting that
			\begin{equation*}
			\begin{aligned}
				&\Theta^{-1}r_\nu^{(1)} \in G^{-\infty,1}\{0\}(\omega^{\tilde m_2}\Phi^{-1},\gt)^{(1)}_N \cap G^{-\infty,1}\{m_3',0,0\}(\omega^{m_2'}\Phi^{-1},\gt)^{(2)}_N  \\
				& \qquad\cap  G^{-\infty,1}\{m_3,m_4,0,0\}(\omega^{m_2}\Phi^{-1},\gt)^{(3)}_N
			\end{aligned}
		\end{equation*} 
		Similar is the case for the remainder $r_\nu^{(2)}.$
	\end{rmk}
	
	Substituting $a(t,x,D_x)$ in Theorem \ref{conju} with identity operator $I$ and taking $k$ sufficiently large, one can easily arrive at the following two corollaries.
	
	\begin{cor}\label{cor1}
		There exists a $k^*>1$ such that for $k \geq k^*$
		\[
			e^{\pm\nu(t)\Theta(x,D_x)} e^{\mp\nu(t)\Theta(x,D_x)} = I + R^{(\pm)}(t,x,D_x)
		\]
		where $ I + R^{(\pm)}$ are invertible operators with $\T^{-1} \sigma(R^{(\pm)}) \in C([0,T];\GT(-1,-1)).$
	\end{cor}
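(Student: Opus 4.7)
The plan is to invoke Theorem \ref{conju} with the operator $a(t,x,D_x)$ taken to be the identity $I$. Since $\sigma(I)\equiv 1$ lies trivially in each of $\Gint(0,0,N,0)$, $\Gmid(0,0,0,N,0,0)$ and $\Gext(0,0,0,0,N,0,0)$ with all orders zero, one has $l_1^{*}=l_2^{*}=0$. I would apply the theorem once with $+\nu(t)$ and once with $-\nu(t)$; the oscillatory-integral derivation in the proof of Theorem \ref{conju} goes through verbatim under a sign flip, the only requirement being $|\nu(t)|<\delta^{*}$. This yields
\[
e^{\pm\nu(t)\Theta(x,D_x)}\,e^{\mp\nu(t)\Theta(x,D_x)} \;=\; I + r^{(1)}_{\pm} + r^{(2)}_{\pm} + r^{(3)}_{\pm},
\]
with $\Theta^{-1}r^{(1)}_{\pm}\in G^{-\infty,-1}(\Phi,\gt)$, $\Theta^{-1}r^{(2)}_{\pm}\in G^{-1,-\infty}(\Phi,\gt)$ and $\Theta^{-1}r^{(3)}_{\pm}\in G^{-\infty}$. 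Because the identity carries no $t$-singularity, the delicate $\tilde\theta$-unbounded branch with its $t^{1-\varepsilon}$ weight is not triggered, and the continuity of $t\mapsto\sigma(r^{(j)}_{\pm})$ follows directly from $\nu\in C([0,T])$.

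Setting $R^{(\pm)}:=r^{(1)}_{\pm}+r^{(2)}_{\pm}+r^{(3)}_{\pm}$, each of the three classes above is contained in the common coarser space $\Theta\cdot\GT(-1,-1)$, since each symbol has order at most $-1$ in $\japxik$ and at most $-1$ in $\omega$. This already gives $\Theta^{-1}\sigma(R^{(\pm)})\in C([0,T];\GT(-1,-1))$ and settles the symbol-class part of the corollary.

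For the invertibility of $I+R^{(\pm)}$, the plan is a Neumann-series argument in $L^{2}(\R^{n})$. The symbol $\sigma(R^{(\pm)})$ inherits from the $\Theta\cdot\GT(-1,-1)$ estimate a uniform factor $\japxik^{-1}\Phi^{-1}$ dressed only by the logarithmically growing $\Theta\sim(\ln(\Phi\japxik))^{\varrho_2}$. A Calder\'on--Vaillancourt-type $L^{2}$-boundedness theorem adapted to the metric $\gt$ then bounds $\|R^{(\pm)}(t)\|_{L^{2}\to L^{2}}$ by a finite sum of symbol seminorms; since $\japxik^{-1}\leq k^{-1}$ uniformly in $\xi$ and $\Theta/\japxik\to 0$ as $k\to\infty$, the operator norm is of order $k^{-1}(\log k)^{\varrho_2}$. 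Choosing $k^{*}>1$ large enough forces $\sup_{t\in[0,T]}\|R^{(\pm)}(t)\|_{L^{2}\to L^{2}}\leq 1/2$ for all $k\geq k^{*}$, and then $I+R^{(\pm)}$ is invertible on $L^{2}$ via the convergent series $\sum_{j\geq 0}(-R^{(\pm)})^{j}$.

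The main obstacle is making sure that the Calder\'on--Vaillancourt seminorms genuinely absorb the logarithmic growth of $\Theta$ against the $\japxik^{-1}\Phi^{-1}$ decay, uniformly in $k$; this rests on the strong uncertainty bound $h_{g}(x,\xi)\leq(\Phi\japxik)^{-1}$, which forces $\Theta$ to enter the seminorm estimates only polynomially in $\ln(\Phi\japxik)$, so that the genuine negative order in $\japxik$ always dominates for $k$ sufficiently large.
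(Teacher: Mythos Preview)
Your proposal is correct and follows essentially the same route as the paper: the paper states that the corollary follows by substituting $a(t,x,D_x)=I$ in Theorem~\ref{conju} and taking $k$ sufficiently large, and you carry out exactly this, supplying the details (computing $l_1^*=l_2^*=0$, collecting the three remainders into $R^{(\pm)}$, and spelling out the Neumann-series invertibility via a Calder\'on--Vaillancourt bound) that the paper leaves implicit. Your observation that the identity carries no $t$-singularity, so the $t^{1-\varepsilon}$ weight is not needed, is a worthwhile refinement of the raw statement of Theorem~\ref{conju}.
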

	
	\begin{cor}\label{cor2}
		Let $0 \leq \varepsilon \leq \varepsilon'<\delta^*$ where $\delta^*$ is as in Theorem \ref{conju}. Then
		\[
			e^{\varepsilon\Theta(x,D_x)} e^{-\varepsilon'\Theta(x,D_x)} = e^{(\varepsilon-\varepsilon')\Theta(x,D_x)}\big(  I+ \hat{R}(x,D_x)  \big)
		\]
		where $\T^{-1} \sigma(\hat{R}) \in \GT(-1,-1)$ and for sufficiently large $k,$ $I+ \hat{R}$ is invertible.
	\end{cor}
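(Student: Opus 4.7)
The plan is to compute the symbol of the composition $e^{\varepsilon\Theta(x,D_x)} \circ e^{-\varepsilon'\Theta(x,D_x)}$ directly through the oscillatory-integral composition formula, isolate $e^{(\varepsilon-\varepsilon')\Theta(x,\xi)}$ as the principal contribution by Fourier inversion, and bound the residual using exactly the Taylor-expansion and integration-by-parts machinery developed for $r^{(1)}_\nu$ in the proof of Theorem \ref{conju}. Starting from
\[
    \sigma\big(e^{\varepsilon\Theta(x,D_x)} \circ e^{-\varepsilon'\Theta(x,D_x)}\big)(x,\xi) = \iint e^{-iy\cdot\eta}\, e^{\varepsilon\Theta(x,\xi+\eta)}\, e^{-\varepsilon'\Theta(x+y,\xi)} \, dy\, \textit{\dj}\eta,
\]
I would Taylor-expand $e^{-\varepsilon'\Theta(x+y,\xi)}$ around $y=0$: the zeroth-order term telescopes via Fourier inversion to $e^{\varepsilon\Theta(x,\xi)} e^{-\varepsilon'\Theta(x,\xi)} = e^{(\varepsilon-\varepsilon')\Theta(x,\xi)}$, while the first-order residual, after the identity $y_j e^{-iy\cdot\eta} = i\partial_{\eta_j} e^{-iy\cdot\eta}$ is used to integrate by parts in $\eta$, reduces to an oscillatory integral of precisely the form bounding $r^{(1)}_\nu$ in Theorem \ref{conju}.

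Invoking the derivative bound for $e^{\pm\varepsilon\Theta}$ from the preceding lemma, the weight-sequence decay \eqref{wgtseq}, and the inequality $\Theta(x,\xi) - \Theta(x+\theta_1 y,\xi) \leq \Theta(y,\xi)$ from \eqref{Pineq}, I would obtain the residual estimate
\[
    |\partial_\xi^\alpha \partial_x^\beta r(x,\xi)| \;\lesssim\; e^{(\varepsilon-\varepsilon')\Theta(x,\xi)}\, \Theta(x,\xi)^{1+|\alpha|+|\beta|}\, \P^{-1-|\beta|}\, \japxik^{-1-|\alpha|},
\]
yielding $e^{\varepsilon\Theta(x,D_x)} e^{-\varepsilon'\Theta(x,D_x)} = e^{(\varepsilon-\varepsilon')\Theta(x,D_x)} + S(x,D_x)$ with $S$ of the expected lower order. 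To extract $e^{(\varepsilon-\varepsilon')\Theta(x,D_x)}$ as a left factor, I would apply Corollary \ref{cor1} with $\nu = \varepsilon'-\varepsilon < \delta^*$: the product $e^{(\varepsilon-\varepsilon')\Theta(x,D_x)} \circ e^{(\varepsilon'-\varepsilon)\Theta(x,D_x)} = I + R^{(-)}$ is invertible for sufficiently large $k$, so $E := e^{(\varepsilon'-\varepsilon)\Theta(x,D_x)} \circ (I + R^{(-)})^{-1}$ is a right inverse of $e^{(\varepsilon-\varepsilon')\Theta(x,D_x)}$. Defining $\hat R := E \circ \big(e^{\varepsilon\Theta(x,D_x)} e^{-\varepsilon'\Theta(x,D_x)}\big) - I$ makes the factorization hold by construction, and the $\gt$-calculus applied to the composition $E \circ S$ produces the estimate $\Theta^{-1}\sigma(\hat R) \in \GT(-1,-1)$.

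Finally, since $\japxik \geq k$ and $\Theta \leq C(\ln(\P\japxik))^{\varrho_2}$, the bound $|\sigma(\hat R)| \lesssim \Theta (\P\japxik)^{-1}$ makes the $L^2$-operator norm of $\hat R$ arbitrarily small as $k \to \infty$, so a Neumann series delivers $(I + \hat R)^{-1}$ for sufficiently large $k$. The main obstacle is the bookkeeping in the factorization step: each composition (of $E$ with $e^{\varepsilon\Theta(x,D_x)} e^{-\varepsilon'\Theta(x,D_x)}$, and within the parametrix $(I + R^{(-)})^{-1}$) introduces lower-order corrections that must be shown to remain within $\Theta \cdot \GT(-1,-1)$. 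This follows from the sub-additivity and sub-multiplicativity of $\Theta$ underlying the $\gt$-calculus, but accounts for essentially all of the technical overhead in the argument.
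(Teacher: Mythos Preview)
Your proposal is correct and follows the route the paper intends: the paper gives no explicit proof of this corollary, merely stating that it (together with Corollary~\ref{cor1}) follows from ``substituting $a(t,x,D_x)$ in Theorem~\ref{conju} with identity operator $I$ and taking $k$ sufficiently large'', which in practice means rerunning the oscillatory-integral analysis of Theorem~\ref{conju} with the exponents $\varepsilon,\varepsilon'$ in place of a single $\nu$---exactly what you do. Your factorization step via Corollary~\ref{cor1} is a sensible way to extract $e^{(\varepsilon-\varepsilon')\Theta(x,D_x)}$ on the left; one minor point is that the $\eta$-derivative landing on $e^{\varepsilon\Theta(x,\xi+\eta)}$ (rather than on a standard symbol $a$ as in \eqref{rem}) contributes an extra $\Theta$ factor, so the residual bound reads $\Theta^{2+|\alpha|+|\beta|}$ rather than $\Theta^{1+|\alpha|+|\beta|}$, but since $\Theta$ grows only logarithmically this is immaterial for the invertibility of $I+\hat R$ and is consistent with how the corollary is actually used in Proposition~\ref{conti}.
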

	We use the above corollaries to prove the continuity of the operator $e^{\varepsilon \Theta(x,D_x) }$ on the spaces $\sobol(s,\varepsilon').$ The following proposition is helpful in making change of variable in Section \ref{Energy2}.
	\begin{prop}\label{conti}
		The operator $e^{\varepsilon \Theta(x,D_x) } :\sobol(s,\varepsilon') \to \sobol(s,\varepsilon'-\varepsilon)$ is continuous for $k\geq k_0$ and $0 \leq \varepsilon \leq \varepsilon' < \delta^*$ where $k_0$ sufficiently large and $\delta^*$ is as in Thereom \ref{conju}.
	\end{prop}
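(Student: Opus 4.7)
Unpacking the Sobolev norms of Definition \ref{Sobol}, the claim is equivalent to showing
\[
\|e^{(\varepsilon'-\varepsilon)\Theta(x,D_x)}\Phi(\cdot)^{s_2}\la D\rak^{s_1}e^{\varepsilon\Theta(x,D_x)}v\|_{L^2}\ \leq\ C\,\|e^{\varepsilon'\Theta(x,D_x)}\Phi(\cdot)^{s_2}\la D\rak^{s_1}v\|_{L^2}
\]
for every $v\in\sobol(s,\varepsilon')$. Setting $T$ to be the operator on the left and $P := e^{\varepsilon'\Theta(x,D_x)}\Phi(\cdot)^{s_2}\la D\rak^{s_1}$, the plan is to factor $T = (TP^{-1})\cdot P$ for an inverse $P^{-1}$ furnished by the two corollaries above, and then to show that $TP^{-1}$ is $L^2$-bounded by reducing it to a zeroth-order operator in the $\gt$-calculus.

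The first step is a symbol computation. Since $\varepsilon$ and $\varepsilon'-\varepsilon$ both lie in $[0,\delta^*)$, Theorem \ref{conju} (applied with $a=I$) ensures that $e^{\pm\nu\Theta(x,D_x)}$ belongs to the $\gt$-calculus of Section \ref{Symbol classes}. Computing the full symbol of the triple composition $T$ by the usual asymptotic expansion, its principal symbol equals $e^{(\varepsilon'-\varepsilon)\Theta(x,\xi)}\Phi(x)^{s_2}\la\xi\rak^{s_1}e^{\varepsilon\Theta(x,\xi)} = e^{\varepsilon'\Theta(x,\xi)}\Phi(x)^{s_2}\la\xi\rak^{s_1}$, exactly the principal symbol of $P$. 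Consequently $T = P + R$ with $R$ of strictly lower order in the $\gt$-sense.

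Next, Corollary \ref{cor1} (with $\nu=\varepsilon'$) together with the trivial invertibility of $\Phi(\cdot)^{s_2}$ and of the Fourier multiplier $\la D\rak^{s_1}$ shows that, for $k$ sufficiently large, the operator $P$ is a bijection from $\sobol(s,\varepsilon')$ onto $L^2$: its right inverse is $\la D\rak^{-s_1}\Phi(\cdot)^{-s_2}e^{-\varepsilon'\Theta(x,D_x)}(I+R^{(+)})^{-1}$, and injectivity is built into the norm identity $\|v\|_{\sobol(s,\varepsilon')} = \|Pv\|_{L^2}$. By the principal-symbol coincidence above, $TP^{-1}$ has symbol in $\GT(0,0)$, and is therefore $L^2$-bounded by the Calder\'on--Vaillancourt type theorem for the Beals--Fefferman calculus associated with $\gt$. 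Applying this to $u := Pv\in L^2$ delivers the inequality, since $\|Tv\|_{L^2} = \|TP^{-1}u\|_{L^2} \leq C\,\|u\|_{L^2} = C\,\|v\|_{\sobol(s,\varepsilon')}$.

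The main technical difficulty is not conceptual but arithmetic: tracking the exponential weight $e^{\pm\nu\Theta(x,\xi)}$ and the weight sequences $C'_{|\alpha|},K'_{|\beta|}$ of (\ref{wgtseq}) uniformly in $\nu \in [0,\delta^*)$ through both the symbol composition and the parametrix step, in particular verifying that the small-norm condition needed to invert $I+R^{(+)}$ can indeed be met by enlarging $k$. This mirrors exactly the book-keeping carried out in the proof of Theorem \ref{conju} and requires no new ideas.
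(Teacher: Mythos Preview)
Your overall strategy --- show that $TP^{-1}$ is $L^2$-bounded with $T=e^{(\varepsilon'-\varepsilon)\Theta}\Phi^{s_2}\la D\rak^{s_1}e^{\varepsilon\Theta}$ and $P=e^{\varepsilon'\Theta}\Phi^{s_2}\la D\rak^{s_1}$ --- is sensible, but the justification you give contains a genuine gap. The assertion that ``Theorem~\ref{conju} (applied with $a=I$) ensures that $e^{\pm\nu\Theta(x,D_x)}$ belongs to the $\gt$-calculus of Section~\ref{Symbol classes}'' is not correct: Theorem~\ref{conju} is a \emph{conjugation} statement, $e^{\nu\Theta}a\,e^{-\nu\Theta}=a+r$, and says nothing about $e^{\nu\Theta}$ itself lying in any $\GT(m_1,m_2)$. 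In the regime~(\ref{ineq2}) the operator $e^{\nu\Theta}$ is of \emph{infinite order}, so the ``usual asymptotic expansion'' for the full symbol of the triple composition $T$ is not available, and the claim ``$T=P+R$ with $R$ of strictly lower order in the $\gt$-sense'' has no immediate meaning. To salvage your route one would have to commute $e^{\varepsilon\Theta}$ past $\la D\rak^{-s_1}\Phi^{-s_2}$ via Theorem~\ref{conju}, then combine $e^{\varepsilon\Theta}$ with $e^{-\varepsilon'\Theta}$ via Corollary~\ref{cor2}, and finally pair $e^{(\varepsilon'-\varepsilon)\Theta}$ with the resulting $e^{(\varepsilon-\varepsilon')\Theta}$ via Corollary~\ref{cor1} --- each step introducing an $(I+R_j)^{\pm1}$ factor that itself has to be commuted through. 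This is doable but is considerably more than the ``arithmetic bookkeeping'' you describe.

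The paper's own argument sidesteps this entirely. It never treats $e^{\nu\Theta}$ as a symbol-class element; instead it works only with the three ready-made identities $e^{-\varepsilon'\Theta}e^{\varepsilon'\Theta}=I+R_1$, $e^{\varepsilon\Theta}e^{-\varepsilon'\Theta}=e^{(\varepsilon-\varepsilon')\Theta}(I+R_2)$, and $e^{(\varepsilon'-\varepsilon)\Theta}e^{-(\varepsilon'-\varepsilon)\Theta}=I+R_3$ supplied by Corollaries~\ref{cor1} and~\ref{cor2}. Writing $e^{\varepsilon\Theta}w=e^{\varepsilon\Theta}(e^{-\varepsilon'\Theta}e^{\varepsilon'\Theta}-R_1)w$ and substituting yields directly
\[
e^{(\varepsilon'-\varepsilon)\Theta}e^{\varepsilon\Theta}(I+R_1)w=(I+R_3)(I+R_2)\,e^{\varepsilon'\Theta}w,
\]
from which the norm inequality follows after substituting $w=(I+R_1)^{-1}v$ and taking $L^2$ norms. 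The advantage is that every exponential that appears is immediately paired with its partner from one of the two corollaries, so no composition of infinite-order operators outside that framework is ever needed.
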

	\begin{proof}
		Consider $w $ in $\sobol(s,\varepsilon')$. From Corollaries \ref{cor1} and \ref{cor2}, we have
		\begin{linenomath*}
			$$
			\begin{aligned}
				e^{-\varepsilon' \Theta(x,D_x) } e^{\varepsilon' \Theta(x,D_x)} &= I+R_1(x,D_x), \\
				e^{\varepsilon \Theta(x,D_x)} e^{-\varepsilon' \Theta(x,D_x)} &= e^{(\varepsilon-\varepsilon') \Theta(x,D_x)}(I+R_2(x,D_x)),\\
				e^{(\varepsilon' -\varepsilon) \Theta(x,D_x)} e^{-(\varepsilon' -\varepsilon) \Theta(x,D_x)} &= I+R_3(x,D_x).
			\end{aligned}
			$$	
		\end{linenomath*}
		where $\T^{-1}\sigma(R_j) \in \GT(-1,-1)$.	For $k\geq k_0$, $k_0$ sufficiently large, the operators $I+R_j(x,D_x),j=1,2,3$ are invertible.
		Then, one can write 
		\begin{linenomath*}
			\begin{equation*}
				e^{\varepsilon \Theta(x,D_x) } w = e^{\varepsilon (\Theta(x,D_x) }  \left(e^{-\varepsilon' \Theta(x,D_x) } e^{\varepsilon' \Theta(x,D_x) } -R_1 \right) w.
			\end{equation*}
		\end{linenomath*}
		This implies that 
			\begin{linenomath*}
			\begin{equation}
				\label{c2}
				e^{\varepsilon \Theta(x,D_x) } (I+R_1)w = e^{(\varepsilon-\varepsilon') \Theta(x,D_x) } (I+R_2) e^{\varepsilon' \Theta(x,D_x) } w.
			\end{equation}
		\end{linenomath*}
		From (\ref{c2}), we have
		\begin{linenomath*}
			\[
			e^{(\varepsilon' -\varepsilon) \Theta(x,D_x) } e^{\varepsilon \Theta(x,D_x) } (I+R_1)w = (I+R_3)(I+R_2) e^{\varepsilon' \Theta(x,D_x) } w.
			\]
		\end{linenomath*}
		Note that $(I+R_j),j=1,2,3,$ are bounded and invertible operators. Substituting  $w = (I+R_1)^{-1}v$ and taking $L^2$ norm on both sides of the above equation yields 
		\begin{linenomath*}
			\[
			\Vert e^{\varepsilon \Theta(x,D_x) } v \Vl(s,\varepsilon'-\varepsilon) \leq C_1 \Vert (I+R_1)^{-1}v  \Vl(s,\varepsilon')  \leq C_2 \Vert v  \Vl(s,\varepsilon')\;,
			\]
		\end{linenomath*}
		for all $v \in \sobol(s,\varepsilon')$ and for some $C_1,C_2>0$. This proves the proposition.
	\end{proof}

 \section{Proofs } \label{Proof1}

 	In this section, we give proofs of the main results. There are four key steps in the proofs of Theorem \ref{result1} and Theorem \ref{result3}. First, we employ the diagonalization procedure available in the literature \cite{yag,KuboReis} to arrive at a first order system with diagonalized principal part. Second, we localize the singularity to the regions $\zmid(N)$ and $\zext(N)$. Third, using the locazation achieved in the previous step we define a function that majorizes the symbol with singularity. This helps in making appropriate change of variable to handle the singularity. Lastly, using sharp G\r{a}rding’s inequality we arrive at an energy estimate that proves the well-posedness of the problem in the Sobolev spaces $\sobol(s,\delta)$.
 
	\subsection{First Order System with Diagonalized Principal Part}
	
	Let $\Lambda(x,D_x)= \P \la D_x \rak$ and $\tilde \Lambda(x,D_x)$ be such that $\sigma(\tilde \Lambda) = \P^{-1}\japxik^{-1}.$ Observe that 
	\[
		\tilde \Lambda(x,D_x) \Lambda(x,D_x) = I + K(x,D_x)
	\] 
	where $\sigma(K) \in \G(-1,-1).$ We choose $k>k_0$ for large $k_0$ so that the operator norm of $K$ is strictly lesser than $1.$ This guarantees
	\[
		(I + K(x,D_x))^{-1} = \sum_{j=0}^{\infty} (-1)^j K(t,x,D_x)^j \in \G(0,0).
	\]
	The transformation $U=\left(U_{1}, U_{2}\right)^{T}=\left(\Lambda(x,D_x) u, D_{t} u\right)^{T}$ transfers the Cauchy problem (\ref{eq1}) to
	\begin{equation}
		\label{FOS1}
		D_{t} U-A U=F, \quad U(0, x)=\left(\begin{array}{c}
			\Lambda(x,D_{x})f_1(x) \\
			-i f_2(x)
		\end{array}\right)
	\end{equation}
	where $F := \left( 0,-f \right)^T$ and
	$$
	A:=\left(\begin{array}{cc}
		0 & \Lambda(x,D_{x}) \\
		\left(a(t,x,D_x)+b(t,x,D_x)\right)(I+K(x,D_x))^{-1}\tilde\Lambda(x,D_x) & 0
	\end{array}\right).
	$$
	
	Consider a function $\chi \in C_{0}^{\infty}(\mathbb{R})$ such that $\chi(s) \equiv 1$ for $|s| \leq 1, \chi(s) \equiv 0$ for $|s| \geq 2$ and $0 \leq \chi(s) \leq 1$. Let $N>k$ in the definition of $\zi$ and $\zii$, where $k$ is appropriately chosen later in the discussion. We define the functions
	\begin{equation}
		\lambda_{j}(t, x, \xi)= d_{j} \CHI \om \japxik + \CHIC \tau_{j}(t, x, \xi), \quad j=1,2 
	\end{equation}
	where $d_{2}=-d_{1}$ is a positive constant and
	\begin{equation}
			\tau_{j}(t, x, \xi)=(-1)^{j} \sqrt{a(t, x, \xi)}, \quad a(t, x, \xi):=\sum_{l,m=1}^{n} a_{l,m}(t, x) \xi_{l} \xi_{m}.
	\end{equation}
    Note that $ a \in \Gint(2,2,N,1) \cap \Gmid(2,2,0,N,1,0) \cap \Gext(2,2,0,0,N,1,0).$ Let $\chi_1$, $\chi_2$ and  $\chi_3$ be the indicator functions for the regions $\zint(2N)$, $\zmid(N)$ and $ \zext(N),$ respectively. Observe that 
    \begin{enumerate}[label=\roman*)]
    	\item $\lambda_j\var $ is $G_{\omega}$-elliptic symbol i.e. for all $\var \in [0,T] \times \R^n \times \R^n$ \[
    	\vert \lambda_j(t,x,\xi) \vert \geq C\om\japxik, 
    	\]
    	for some $C>0$ independent of $k.$
    	
    	\item $\lambda_{j} \in \Gint(1,1,2N,0) \cap \Gmid(1,1,0,N,1,1) \cap \Gext(1,1,0,0,N,1,1).$ More precisely, for $|\alpha| + |\beta| > 0,$
    	\begin{equation}\label{root1}
    		\begin{rcases}
    			\begin{aligned}
    				\vert \lambda_j \var \vert & \leq C_0 \om \japxik \left\{  \chi_1 + \left(  \chi_2+\chi_3  \right) \tilde\theta(t) \right\} \\
    				\vert \partial_{\xi}^{\alpha} \partial_{x}^{\beta} \lambda_j \var \vert & \leq C_{\alpha\beta} \om \P^{-|\beta|} \japxik^{1-|\alpha|} \left\{  \chi_1 + \left(  \chi_2+\chi_3  \right) \tilde\theta(t)^{|\alpha| + |\beta|} \right\}
    			\end{aligned}
    		\end{rcases}
    	\end{equation}
    	
    	\item $\partial_t\lambda_{j} \in \Gint(1,1,2N,1) \cap \Gmid(1,1,1,N,0,1) \cap \Gext(1,1,1,0,N,0,1).$ More precisely, for $|\alpha| + |\beta| > 0,$
    	\begin{equation}\label{root2}
    		\begin{rcases}
    			\begin{aligned}
    				\partial_t \lambda_j & \sim 0 \quad \text{ in } \zint(N) \\
    				\vert \partial_t \lambda_j \var \vert & \leq C_0 \om \japxik \left\{  \chi_1 \tilde\theta(t) + \left(  \chi_2+\chi_3  \right) \frac{\theta(t)}{t} \right\} \\
    				\vert \partial_{\xi}^{\alpha} \partial_{x}^{\beta} \partial_t \lambda_j \var \vert & \leq C_{\alpha\beta} \om \P^{-|\beta|} \japxik^{1-|\alpha|} \left\{  \chi_1 \tilde\theta(t)+ \left(  \chi_2+\chi_3  \right)  \frac{\theta(t)}{t} \tilde\theta(t)^{|\alpha| + |\beta|} \right\}
    			\end{aligned}
    		\end{rcases}
    	\end{equation}
    \end{enumerate}

	We begin the diagonalization procedure by defining the matrix pseudodifferential operators $\cn(1)$ and $\tcn(1)$ with symbols
	$$
	\begin{aligned}
		\cn(1) &= \left(\begin{array}{cc}
		I & I \\
		\lambda_{1}(t, x, D_x) \tilde\Lambda(x,D_x) & \lambda_{2}(t, x, D_x) \tilde\Lambda(x,D_x) 
    	\end{array}\right), \quad \text{ and } \\
	  \tcn(1)  &= \frac{1}{2} \Lambda(x,D_x) \tilde \lambda_2(t,x,D_x)I \left(\begin{array}{cc}
			  \lambda_{2}(t, x, D_x) \tilde\Lambda(x,D_x) & -I \\
			- \lambda_{1}(t, x, D_x) \tilde\Lambda(x,D_x)&  I 
		\end{array}\right),
	\end{aligned}
	$$
	where $\sigma\big( \tilde \lambda_2(t,x,D_x) \big) = \lambda_2(t,x,\xi)^{-1}.$ Notice that $\mathcal{N}_1$ and $\tilde{\mathcal{N}}_1$ are elliptic and satisfy
	\[
		{\mathcal{N}}_1(t,x,D_x) \tilde{\mathcal{N}}_1(t,x,D_x) = I + K_1(t,x,D_x)
	\] 
	with $\sigma(K_1)$ in 
	$$ \Gint(-1,-1,N,0) \cap \Gmid(-1,-1,0,N,2,1) \cap \Gext(-1,-1,0,0,N,2,1) .$$
 	 From Propositions \ref{p1}-\ref{p3} and Remark \ref{lb}, $\sigma(K_1) \in G^{-1+\varepsilon,-1+\varepsilon}(\Phi,\go).$ We choose $k>k_1$ for large $k_1$ so that the operator norm of $K_1$ is strictly lesser than $1.$ This guarantees
	\[
		(I + K_1(t,x,D_x))^{-1} = \sum_{j=0}^{\infty} (-1)^j K_1(t,x,D_x)^j \in C([0,T];OP \G(0,0)) .
	\]
	We make the following change of variable 
	\begin{equation}
		\label{COV1}
		V_1(t,x)=\tilde{\mathcal{N}}_1(t,x,D_x) U(t,x). 
	\end{equation}
	Implying 
	\begin{align*}
	    D_{t} V_1 &=  \tilde{\mathcal{N}}_1 D_{t} U  + D_{t}  \tilde{\mathcal{N}}_1 U\\
		&=  \tilde{\mathcal{N}}_1 A (I+K_1)^{-1} {\mathcal{N}}_1 V_1 + D_{t} \tilde{\mathcal{N}}_1 (I+K_1)^{-1} {\mathcal{N}}_1 V_1 + F_1 . 
	\end{align*}
	Here $F_1 = \tilde{\mathcal{N}}_1  F $ and
	\begin{itemize}
		\item $\tilde{\mathcal{N}}_1 A (I+K_1)^{-1}{\mathcal{N}}_1 = A_1+A_2+A_3$ where 
		\begin{align*}
			\sigma(A_1) &= diag(\lambda_1(t,x,\xi), \lambda_2(t,x,\xi)), \\
			\sigma(A_2) & \in \Gint(1,1,2N,1) , \quad \sigma(A_2) \equiv 0 \text{ in } \zmid(N) \cup \zext(N),\\
			\sigma(A_3) & \in \Gint(0,0,N,0) \cap \Gmid(0,0,0,N,0,0) \cap \Gext(0,0,0,0,N,0,0) 
		\end{align*}
		
		\item $D_{t} \tilde{\mathcal{N}}_1 (I+K_1)^{-1} {\mathcal{N}}_1=-\tilde{\mathcal{N}}_1 D_{t}{\mathcal{N}}_1+ B_0 ,$ where
		$
			B_0  = D_t K_1 + D_t \tilde{\mathcal{N}}_1 \left( \sum_{j=1}^{\infty}(-K_1)^j \right){\mathcal{N}}_1,
		$
		\small{
		\begin{align*}
			\sigma(\tilde{\mathcal{N}}_1) \sigma\left(D_{t}{\mathcal{N}}_1\right) &= \frac{1}{2} \left(\begin{array}{cc}
				\frac{\Lambda(x,\xi)}{\lambda_1(t,x,\xi)}D_t \frac{ \lambda_1(t,x,\xi)}{\Lambda(x,\xi)}& \frac{-\Lambda(x,\xi)}{\lambda_2(t,x,\xi)}D_t \frac{ \lambda_2(t,x,\xi)}{\Lambda(x,\xi)} \\
				\frac{-\Lambda(x,\xi)}{\lambda_1(t,x,\xi)}D_t \frac{ \lambda_1(t,x,\xi)}{\Lambda(x,\xi)} & \frac{\Lambda(x,\xi)}{\lambda_2(t,x,\xi)}D_t \frac{ \lambda_2(t,x,\xi)}{\Lambda(x,\xi)}
			\end{array}\right), \\
			\sigma(\tilde{\mathcal{N}}_1 D_{t}{\mathcal{N}}_1) &\in \Gint(0,0,2N,1) \cap  \Gmid(0,0,1,N,0,1) \\
			& \qquad  \cap  \Gext(0,0,1,0,N,0,1) ,\\
			\sigma(B_0) &	\in	 \Gint(-1,-1,2N,1) \cap \Gmid(-1,-1,1,N,2,1) \\
			& \qquad \cap \Gext(-1,-1,1,0,N,2,1),	
		\end{align*}}
		and $\sigma(\tilde{\mathcal{N}}_1 D_{t}{\mathcal{N}}_1)=\sigma(B_0)=0$ in $\zint(N)$ by (\ref{root2}).
	\end{itemize}	
	In summary, we have the following proposition.
	\begin{prop}\label{D1}
		The solution $U(t,x)$ to the first order system (\ref{FOS1}) is given by
		\[
			U(t,x) = (I+K_1(t,x,D_x))^{-1} \mathcal{N}_1(t,x,D_x)V_1(t,x)
		\]
		where $V_1(t,x)$ is the solution to the following system
		\begin{equation}
			\label{FOS2}
			(D_{t} -\mathcal{D} +P_{1} +P_{2} +Q_1) V_1= F_1, \; V_1(0,x)= \tilde{\mathcal{N}}_1(0,x,D_x)U(0,x) 
		\end{equation}
		The matrix pseudodifferential operators $\mathcal{D}, P_{1}, P_{2}, Q$ possess the following properties:
		\begin{itemize}
			\item  The operator $\mathcal{D}=\mathcal{D}_1 + \mathcal{D}_2$ is such that
			\[
				\begin{aligned}
					\sigma(\mathcal{D}_1) &= \text{diag}\left\{\lambda_{1}\var , \lambda_{2}\var \right\} ,\\
					\sigma(\mathcal{D}_1)  &\in \Gint(1,1,2N,0) \cap \Gmid(1,1,0,N,1,1) \\  
					&\qquad \cap \Gext(1,1,0,0,N,1,1),  \\
					\sigma(\mathcal{D}_2) &= \frac{1}{2}\text{diag}\left\{\frac{\Lambda(x,\xi)}{\lambda_1\var}{D_{t} \frac{\lambda_{1}\var}{\Lambda(x,\xi)}},\frac{\Lambda(x,\xi)}{\lambda_2\var}{D_{t} \frac{\lambda_{2}\var}{\Lambda(x,\xi)}}\right\}, \\
					\sigma(\mathcal{D}_2) &\in \Gint(0,0,2N,1) \cap \Gmid(0,0,1,N,0,1) \\
					& \qquad \cap \Gext(0,0,1,0,N,0,1), \\
					\sigma(\mathcal{D}_2) &=0 \text{ in } \zint(N).
				\end{aligned}
			\] 
			
			\item $P_{1}$ is diagonal while $P_{2}$ is anti-diagonal and $	\sigma({P}_2) =0 \text{ in } \zint(N).$
			
			\item $\sigma\left(P_{1}\right) \in \Gint(0,0,N,0) \cap \Gmid(0,0,0,N,0,0) \cap \Gext(0,0,0,0,N,0,0)$.
			
			\item $\sigma\left(P_{2}\right) \in \Gint(0,0,2N,1) \cap \Gmid(0,0,1,N,0,1)\cap \Gext(0,0,1,0,N,0,1)$.
			
			\item $\sigma(Q_1) \in \Gint(1,1,2N,1), \sigma(Q) \equiv 0$ in $\zmid(2N) \cup \zext(2N)$.
		\end{itemize}
	\end{prop}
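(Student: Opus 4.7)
The plan is essentially a bookkeeping exercise: apply the diagonalizer $\tilde{\mathcal{N}}_1$ to (\ref{FOS1}), invoke invertibility of $I+K_1$, and then carefully track how every resulting operator splits across the three zones $\zint(N)$, $\zmid(N)$, $\zext(N)$. First I would confirm that the transition $U \leftrightarrow V_1$ is well defined: since $\sigma(K_1) \in G^{-1+\varepsilon,-1+\varepsilon}(\Phi,\go)$ by Propositions \ref{p1}--\ref{p3} and Remark \ref{lb}, for $k$ sufficiently large the Neumann series for $(I+K_1)^{-1}$ converges in the symbol class $G(0,0)$. Hence the change of variable $V_1 = \tilde{\mathcal{N}}_1 U$ is invertible with $U = (I+K_1)^{-1}\mathcal{N}_1 V_1$, and the initial condition for $V_1$ follows immediately.

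Next I would derive the equation for $V_1$. Differentiating gives $D_t V_1 = \tilde{\mathcal{N}}_1 D_t U + (D_t \tilde{\mathcal{N}}_1) U$; substituting $D_t U = A U + F$ from (\ref{FOS1}) and re-expressing $U$ through $V_1$ yields
\[
    D_t V_1 = \bigl[\tilde{\mathcal{N}}_1 A + D_t \tilde{\mathcal{N}}_1\bigr](I+K_1)^{-1}\mathcal{N}_1 V_1 + F_1.
\]
The first bracket is expanded by matrix symbolic calculus as $\tilde{\mathcal{N}}_1 A (I+K_1)^{-1}\mathcal{N}_1 = A_1 + A_2 + A_3$, where $A_1$ carries the diagonal principal symbol $\mathrm{diag}(\lambda_1,\lambda_2)$ and produces $\mathcal{D}_1$; $A_2$ is supported only in $\zint(2N)$ and feeds into $Q_1$; and $A_3$ contributes to the zero-order diagonal part of $P_1$. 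For the second bracket, the identity $D_t(\tilde{\mathcal{N}}_1 \mathcal{N}_1) = D_t(I+K_1)$ rewrites
\[
    (D_t \tilde{\mathcal{N}}_1)(I+K_1)^{-1}\mathcal{N}_1 = -\tilde{\mathcal{N}}_1 D_t \mathcal{N}_1 + B_0,
\]
with $B_0$ strictly lower order as already described in the preamble. The diagonal of $-\tilde{\mathcal{N}}_1 D_t \mathcal{N}_1$ is exactly $\mathcal{D}_2$ (read off from the explicit expression for $\sigma(\tilde{\mathcal{N}}_1)\,\sigma(D_t \mathcal{N}_1)$ listed above); its off-diagonal part becomes the anti-diagonal $P_2$, which vanishes in $\zint(N)$ by (\ref{root2}); and $B_0$ merges with the remaining diagonal zero-order pieces into $P_1$.

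Finally, the symbol-class memberships listed in the statement follow by direct application of (\ref{root1})--(\ref{root2}) together with the zonal symbolic calculus (Propositions \ref{p1}--\ref{p3}), after observing that products and Neumann-series inverses of zero-order operators remain zero-order. The main obstacle is the zonal bookkeeping for the subprincipal contributions $\mathcal{D}_2$, $P_2$, and $B_0$: since $\partial_t \lambda_j$ is essentially null in $\zint(N)$ but has the singular size $\theta(t)/t$ together with the loss weight $\tilde\theta(t)^{|\alpha|+|\beta|}$ on $(\alpha,\beta)$-derivatives in $\zmid(N) \cup \zext(N)$, each such operator must be described simultaneously in three distinct zone-adapted symbol classes, and one must verify that the listed orders are the sharpest obtainable from the defining estimates (\ref{root1})--(\ref{root2}).
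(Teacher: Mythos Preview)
Your proposal is correct and follows exactly the paper's approach: Proposition \ref{D1} is stated as a summary of the computation in the preceding paragraphs, which proceeds precisely through the decomposition $\tilde{\mathcal{N}}_1 A (I+K_1)^{-1}\mathcal{N}_1 = A_1+A_2+A_3$ and the identity $D_t\tilde{\mathcal{N}}_1(I+K_1)^{-1}\mathcal{N}_1 = -\tilde{\mathcal{N}}_1 D_t\mathcal{N}_1 + B_0$ that you describe. The only imprecision is in the final allocation: $A_3$ and $B_0$ are not purely diagonal, so their anti-diagonal pieces cannot all be placed in $P_1$; they must instead be routed to $P_2$ on $\zmid(N)\cup\zext(N)$ and to $Q_1$ on $\zint(2N)$ (via the smooth cutoff $\chi(t/t_{x,\xi})$), which is exactly what secures the claimed vanishing $\sigma(P_2)=0$ in $\zint(N)$.
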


	\subsection{Localization in $Z_{ext}$}
	The main goal of this section is to localize the singularity arising from the first $t$-derivative of coefficients to $\zmid(N)$ and the one from the second $t$-derivative to $\zext(N)$. We will not directly localize the first order system in Proposition \ref{D1} but a modified one after an application of suitable transformation.	
	To this end, consider the elliptic pseudodifferential operators $\mathcal{N}_2, \tilde{\mathcal{N}}_2$ with symbols
	\[
		\sigma(\mathcal{N}_2) = \sigma(\tilde{\mathcal{N}}_2)^{-1} =\left(\begin{array}{cc}
			\left(\frac{\lambda_{1}(t, x, \xi)}{\Lambda(x,\xi)}\right)^{1/2} & 0 \\
			0 & 	\left(\frac{\lambda_{2}(t, x, \xi)}{\Lambda(x,\xi)}\right)^{1/2} 
		\end{array}\right),
	\]
	$\sigma(\mathcal{N}_2), \sigma(\tilde{\mathcal{N}}_2) \in \Gint(0,0,N,0) \cap \Gmid(0,0,0,N,1,1) \cap \Gext(0,0,0,0,N,1,1).$ Note that the symbols is constant in $\zint(N)$. Let
	\[
		\cn(2)\tcn(2)(t,x,D_x) = I + K_2(t,x,D_x),
	\]
	with $\sigma(K_2)$ in
 	$$ \Gint(-1,-1,N,0) \cap \Gmid(-1,-1,0,N,2,1) \cap \Gext(-1,-1,0,0,N,2,1).$$
	 We choose $k>k_2$ for large $k_2$ so that the operator norm of $K_2$ is strictly lesser than $1.$ This guarantees
	\[
	(I + K_2(t,x,D_x))^{-1} = \sum_{j=0}^{\infty} (-1)^j K_2(t,x,D_x)^j \in C([0,T]; OP\G(0,0)).
	\]	
	We make the following change of variable
	\[
		{V}_2(t,x) = \tilde{\mathcal{N}}_2(t,x,D_x) V_1(t,x).
	\]
	Implying
	\[
		\begin{aligned}
			D_t V_2 &= \tilde{\mathcal{N}}_2 D_tV_1 + D_t \tilde{\mathcal{N}}_2 V_1 \\
			 &= \left( \tilde{\mathcal{N}}_2(\mathcal{D}_1+\mathcal{D}_2-P_1-P_2-Q) +  D_t \tilde{\mathcal{N}}_2\right) (I+K_2)^{-1} \mathcal{N}_2 {V}_2 + F_2,
		\end{aligned}
	\]
	where $F_2 = \tilde{\mathcal{N}}_2 F_1.$ Observe that $\sigma(\tilde{\mathcal{N}}_2)\sigma(\mathcal{D}_2)+\sigma(D_t \tilde{\mathcal{N}}_2)=0$ as { \small
	\[
		\sigma(\tilde{\mathcal{N}}_2)\sigma(\mathcal{D}_2)= -\sigma(D_t \tilde{\mathcal{N}}_2)= \left(\begin{array}{cc}
			\frac{1}{2}\frac{\Lambda(x,\xi)^{1/2}}{\lambda_1(t,x,\xi)^{3/2}} D_t \lambda_1(t,x,\xi) & 0\\
			0 & \frac{1}{2} \frac{\Lambda(x,\xi)^{1/2}}{\lambda_2(t,x,\xi)^{3/2}} D_t \lambda_2(t,x,\xi)
		\end{array}\right).
	\]}
	Here 
	$$
	\begin{aligned}
		\sigma(\tilde{\mathcal{N}}_2\mathcal{D}_2 + D_t \tilde{\mathcal{N}}_2) &\in \Gint(-1,-1,2N,2) \cap \Gmid(-1,-1,1,N,2,1)  \\ 
		& \qquad  \cap \Gext(-1,-1,1,0,N,2,1).
	\end{aligned}
	$$
	To be precise, due to the presence of $D_t \lambda_j,j=1,2,$ in $\sigma(D_t \tilde{\mathcal{N}}_2)$ the singularity of order $2$ in $\zint(2N)$ appears only in the region $\zint(2N) / \zint(N)$ as $D_t \lambda_j=0$ in $\zint(N).$ 
	From the estimate (\ref{root2}) Remark \ref{lb},
	$$
	\begin{aligned}
		\sigma(\tilde{\mathcal{N}}_2\mathcal{D}_2 + D_t \tilde{\mathcal{N}}_2) &\in \Gint(0,0,2N,0) \cap \Gmid(0,0,1,N,0,0)  \\ 
		& \qquad  \cap \Gext(-1,-1,1,0,N,2,1).
	\end{aligned}
	$$
	Let 
	\begin{equation}
		\label{ops}
		\begin{rcases}
			\begin{aligned}
				\tilde P_1 & =  \tilde{\mathcal{N}}_2 P_1  (I+K_2)^{-1}  \mathcal{N}_2 + \mathcal{D}_1 -  \tilde{\mathcal{N}}_2 \mathcal{D}_1 (I+K_2)^{-1}  \mathcal{N}_2 ,\\
				\tilde P_2 & = \tilde{\mathcal{N}}_2 P_2  (I+K_2)^{-1} \mathcal{N}_2 ,\\
				\tilde Q_1 & = \tilde{\mathcal{N}}_2 Q_1 (I+K_2)^{-1} \mathcal{N}_2,\\
				\tilde Q_2 & = -	\big(\tilde{\mathcal{N}}_2\mathcal{D}_2 + D_t \tilde{\mathcal{N}}_2 \big)(I+K_2)^{-1}  \mathcal{N}_2.
			\end{aligned}
	\end{rcases}
	\end{equation}
	Here $\sigma( \tilde P_1)$ is in
	\[
		\Gint(0,0,N,0) \cap \Gmid(0,0,0,N,2,1) \cap \Gext(0,0,0,0,N,2,1).
	\]	
	By Propositions \ref{p1}-\ref{p3} and Remark \ref{lb}, $t^{1-\varepsilon}\sigma( \tilde P_1) \in \G(0,0)$ for every $\varepsilon \in (0,1).$
	It is easy to see that $\tilde P_1$ is of diagonal structure while $\tilde P_2$ is of anti-diagonal and 
	\[
		\begin{aligned}
			\sigma(\tilde P_1)  &\in \Gint(0,0,N,0) \cap \Gmid(0,0,0,N,2,1)  \\ & \qquad \cap \Gext(0,0,0,0,N,2,1), \\
			 \sigma(\tilde P_2) &\in \Gint(0,0,N,1) \cap \Gmid(0,0,1,N,0,1)  \\ & \qquad \cap \Gext(0,0,1,0,N,0,1), \\
			\sigma(\tilde Q_1) &\in \Gint(1,1,N,1), \sigma(\tilde Q_1) \equiv 0 \text{ in } \zmid(2N) \cup \zext(2N),\\
			\sigma(\tilde Q_2) &\in \Gint(0,0,2N,0) \cap \Gmid(0,0,1,N,0,0)  \\ 
			& \qquad  \cap \Gext(-1,-1,1,0,N,2,1).
		\end{aligned}
	\]
	Summarizing the above discussion we have the following proposition.
	\begin{prop}\label{D2}
		The solution $V_1(t,x)$ to first order system (\ref{FOS2}) is given by
		\[
		V_1(t,x) = (I + K_2(t,x,D_x))^{-1}\mathcal{N}_2(t,x,D_x) {V}_2(t,x)
		\]
		where ${V}_2(t,x)$ satisfies the following system
		\begin{equation}
			\label{FOS20}
			(D_{t} -\mathcal{D}_1 +\tilde P_{1} + \tilde P_{2} + \tilde Q_1 + \tilde Q_2) V_2= F_2, \; V_2(0,x)=  \tilde{\mathcal{N}}_2(0,x,D_x)V_1(0,x).
		\end{equation}
		The matrix pseudodifferential operators $\tilde P_{1}, \tilde P_{2},  \tilde Q_1,  \tilde Q_2$ are as in (\ref{ops}) and $\mathcal{D}_1$ is as in Proposition \ref{D1}.
	\end{prop}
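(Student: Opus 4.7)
The plan is to mimic the diagonalization argument used to obtain Proposition \ref{D1}, replacing the first change of variables $\tilde{\mathcal{N}}_1$ by the elliptic multiplier $\tilde{\mathcal{N}}_2$ whose symbol is $\mathrm{diag}\bigl\{(\Lambda/\lambda_j)^{1/2}\bigr\}_{j=1,2}$. The choice of $\tilde{\mathcal{N}}_2$ is designed to kill the off-principal singular diagonal piece $\mathcal{D}_2$ via the identity $\sigma(\tilde{\mathcal{N}}_2)\sigma(\mathcal{D}_2)+\sigma(D_t\tilde{\mathcal{N}}_2)=0$ noted in the text; this is the crucial cancellation and should be verified first, as it is the whole reason we choose that particular $\tilde{\mathcal{N}}_2$. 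A quick calculation gives $\partial_t(\Lambda/\lambda_j)^{-1/2}=\tfrac12(\Lambda/\lambda_j)^{-1/2}\Lambda^{-1}\partial_t\lambda_j$, which matches $\sigma(\tilde{\mathcal{N}}_2)\sigma(\mathcal{D}_2)$ up to the factor $i$ coming from $D_t=-i\partial_t$; this is the identity displayed in the text immediately before the definition of the operators in \eqref{ops}.

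Next, I differentiate the identity $V_2=\tilde{\mathcal{N}}_2 V_1$ in $t$, use the Leibniz rule, and substitute for $D_t V_1$ using \eqref{FOS2}. Since $\mathcal{N}_2\tilde{\mathcal{N}}_2=I+K_2$ with $\sigma(K_2)\in\G(-1,-1)$ (up to the localization classes stated in the excerpt), the operator norm of $K_2$ can be made $<1$ by choosing $k$ large enough, so $I+K_2$ is invertible in $C([0,T];OP\G(0,0))$. I then express $V_1=(I+K_2)^{-1}\mathcal{N}_2 V_2$ in every resulting term; this conjugation produces, on the right-hand side, the operators $\tilde{\mathcal{N}}_2\mathcal{D}_1(I+K_2)^{-1}\mathcal{N}_2$, $\tilde{\mathcal{N}}_2 P_j (I+K_2)^{-1}\mathcal{N}_2$, $\tilde{\mathcal{N}}_2 Q_1(I+K_2)^{-1}\mathcal{N}_2$ and the already-analyzed $-(\tilde{\mathcal{N}}_2\mathcal{D}_2+D_t\tilde{\mathcal{N}}_2)(I+K_2)^{-1}\mathcal{N}_2$. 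To recover $\mathcal{D}_1$ (and not merely a conjugate of it) in the final equation I add and subtract $\mathcal{D}_1$ on the right, which is exactly what the definition of $\tilde P_1$ in \eqref{ops} encodes. This gives the system \eqref{FOS20} formally.

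It remains to verify the symbol-class assertions for $\tilde P_1,\tilde P_2,\tilde Q_1,\tilde Q_2$ by composition inside the calculus developed in Section \ref{Symbol classes} (Propositions \ref{p1}--\ref{p3}), tracking the localization zones carefully. For $\tilde P_2$ the anti-diagonal structure is preserved because $P_2$ is anti-diagonal and both $\tilde{\mathcal{N}}_2$ and $\mathcal{N}_2$ are diagonal; the diagonal structure of $\tilde P_1$ follows similarly together with the diagonal nature of $\mathcal{D}_1-\tilde{\mathcal{N}}_2\mathcal{D}_1(I+K_2)^{-1}\mathcal{N}_2$, whose principal symbol cancels, leaving a gain of one order as displayed. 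The support condition $\tilde Q_1\equiv 0$ on $\zmid(2N)\cup \zext(2N)$ is inherited from $Q_1$ since multiplication by symbols supported elsewhere preserves such cutoffs, and the stated orders of $\tilde Q_2$ follow by inspection of $\tilde{\mathcal{N}}_2\mathcal{D}_2+D_t\tilde{\mathcal{N}}_2$ in the three zones, using $\partial_t\lambda_j\equiv 0$ on $\zint(N)$ from \eqref{root2}, which is precisely why the order-$2$ singularity in $\zint(2N)$ only sees the annular region $\zint(2N)\setminus\zint(N)$.

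The main obstacle is bookkeeping rather than conceptual: one must simultaneously track the three different zonal symbol classes $G^{\cdots}\{\cdots\}(\omega,\go)^{(j)}$, $j=1,2,3$, through compositions and through the Neumann series defining $(I+K_2)^{-1}$, and verify that at every step the contributions in $\zint$, $\zmid$, $\zext$ remain in the prescribed classes. The key technical point one has to check with care is that the gain $\P^{-1}\japxik^{-1}$ from the cancellation $\mathcal{D}_1-\tilde{\mathcal{N}}_2\mathcal{D}_1(I+K_2)^{-1}\mathcal{N}_2$ survives inside every zone with the stated $\tilde\theta$ and $\theta/t$ powers; this reduces to the commutator expansion for symbols in the Beals--Fefferman calculus associated to $\go$, whose remainder estimates are already available from Propositions \ref{p1}--\ref{p3}.
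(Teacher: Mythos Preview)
Your proposal is correct and follows essentially the same route as the paper: define $V_2=\tilde{\mathcal{N}}_2 V_1$, use the Leibniz rule together with the symbolic cancellation $\sigma(\tilde{\mathcal{N}}_2)\sigma(\mathcal{D}_2)+\sigma(D_t\tilde{\mathcal{N}}_2)=0$, invert $I+K_2$ by Neumann series for large $k$, add and subtract $\mathcal{D}_1$ to isolate $\tilde P_1$, and track the zonal symbol classes via the calculus in Appendix~I. The only slip is in your quick derivative check, where $\partial_t(\Lambda/\lambda_j)^{1/2}=-\tfrac12\Lambda^{1/2}\lambda_j^{-3/2}\partial_t\lambda_j$ (you wrote the wrong exponent and a $\Lambda^{-1}$ instead of $\lambda_j^{-1}$), but this does not affect the argument since the cancellation you invoke is exactly the one displayed in the text.
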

	
	Next, we aim to localize the singularity.
	Let
	\[
		\sigma\left(\tilde P_{2}\right)=\left(\begin{array}{cc}
			0 & p_{12} \\
			p_{21} & 0
		\end{array}\right).
	\]
	We define the elliptic pseudodifferential operators $\cn(3)$ and $\tcn(3)$ with symbols
	\[
		\sigma(\cn(3)) = \sigma(\tcn(3))^{-1} = I+\eta\var
	\]
	where 
	\[
		\eta(t, x, \xi):=  \left(1-\chi(t/\tilde{t}_{x,\xi})\right) \left(\begin{array}{cc}
			0 & \frac{p_{12}}{2\lambda_{1}} \\
			\frac{p_{21}}{2\lambda_{2}} & 0
		\end{array}\right) 	\in \Gext(-1,-1,1,0,N,0,1).\\		
	\]
	The form of $\sigma(\tcn(3))$ is 
	\[
		\sigma(\tcn(3)) = \frac{1}{1-r}\left( \begin{array}{cc}
			1 & - (1-\chi)\frac{p_{12}}{2\lambda_{1}}\\
			-(1-\chi)\frac{p_{21}}{2\lambda_{2}} & 1
			\end{array}\right), \; r = \left(1-\chi(t/\tilde{t}_{x,\xi})\right)^2\frac{p_{12}p_{21}}{4\lambda_1\lambda_2}.
	\]
	The symbol $\sigma(\tcn(3))$ is well defined as we see that
	\[
		\vert r\var\vert \leq \frac{C}{\P^2\japxik^2} \left(\frac{\theta(t)}{t}\right)^2 \leq \frac{C}{N^2} \text{ in } \zext(N)
	\]
	and a large $N$ ensures that $\vert r\var\vert \leq 1/2$ in $\J.$ Let
	\[
	\cn(3)\tcn(3)= I + K_3(t,x,D_x),
	\]
	with $\sigma(K_3) =0$ in $\zint(N)\cup \zmid(N)$ and $\sigma(K_3) \in \Gext(-2,-2,1,0,N,0,1).$ From Propositions \ref{p1}-\ref{p3} and Remark \ref{lb}, $\sigma(K_3) G^{-1+\varepsilon,-1+\varepsilon}(\Phi,\go).$ We choose $k>k_3$ for large $k_3$ so that the operator norm of $K_3$ is strictly lesser than $1.$ This guarantees
	\[
	(I + K_3(t,x,D_x))^{-1} = \sum_{j=0}^{\infty} (-1)^j K_3(t,x,D_x)^j \in C([0,T]; OP\G(0,0)).
	\]	
	
	We make a following change of variable 
	\[
		V_3(t,x) = \tcn(3) V_2(t,x)
	\]
	Implying
	\[
		\begin{aligned}
			D_tV_3  &= \tilde{\mathcal{N}}_3D_tV_2 + D_t \tilde{\mathcal{N}}_3 V_2\\
			&= \left( \tilde{\mathcal{N}}_3 ( \mathcal{D}_1 -\tilde P_{1} - \tilde P_{2} - \tilde Q_1 - \tilde Q_2) + D_t \tilde{\mathcal{N}}_3 \right) (I+K_3)^{-1}\mathcal{N}_3V_3 + F_3  
		\end{aligned}
	\]
	where $F_3 = \tilde{\mathcal{N}}_3 F_2.$  Let us write
	\[
				\tilde{\mathcal{N}}_3  \big( \mathcal{D}_1 - \tilde{P}_2\big) (I+K_3)^{-1}\mathcal{N}_3  = \tilde{\mathcal{N}}_3 \big( \mathcal{D}_1 - \tilde{P}_2\big) \mathcal{N}_3  + \tilde{\mathcal{N}}_3 \big( \mathcal{D}_1 - \tilde{P}_2\big) \left(\sum_{j=1}^{\infty}(-K_3)^j\right) \mathcal{N}_3.
	\]
	We see that
	\[
		\begin{aligned}
			&\sigma(\tilde{\mathcal{N}}_3)  \sigma(\mathcal{D}_1) \sigma(\mathcal{N}_3) \\
			&=  \frac{1}{1-r}\left( \begin{array}{cc}
				1 & - (1-\chi)\frac{p_{12}}{2\lambda_{1}}\\
				-(1-\chi)\frac{p_{21}}{2\lambda_{2}} & 1
			\end{array}\right) 
			\left(\begin{array}{cc}
				\lambda_1 & 0\\
				0 & \lambda_2
			\end{array}\right)
			\left( \begin{array}{cc}
				1 & (1-\chi)\frac{p_{12}}{2\lambda_{1}}\\
				(1-\chi)\frac{p_{21}}{2\lambda_{2}} & 1
			\end{array}\right) \\
			&= \frac{1}{1-r} \left\{ \mathcal{D}_1  + (1-\chi)^2 \frac{p_{12}p_{21}}{4\lambda_2} diag\left\{  1,  -1\right\} + (1-\chi)\tilde{P}_2\right\}\\
			&= \mathcal{D}_1 + (1-\chi)\tilde{P}_2 + \frac{r}{1-r} \left( \mathcal{D}_1 + \tilde{P}_2 \right) + \frac{1}{1-r} (1-\chi)^2 \frac{p_{12}p_{21}}{4\lambda_2} diag\left\{  1,  -1\right\}\\
			&= \mathcal{D}_1 + (1-\chi)\tilde{P}_2 \; \text{ mod } \; \Gext(-1,-1,2,0,N,2,1).
		\end{aligned}
	\]
	Similarly,
	\[
		\begin{aligned}
			&\sigma(\tilde{\mathcal{N}}_3)  \sigma(\tilde{P}_2) \sigma(\mathcal{N}_3) \\
			&=  \frac{1}{1-r}\left( \begin{array}{cc}
				1 & - (1-\chi)\frac{p_{12}}{2\lambda_{1}}\\
				-(1-\chi)\frac{p_{21}}{2\lambda_{2}} & 1
			\end{array}\right) 
			\left(\begin{array}{cc}
				0 & p_{12}\\
				p_{21} & 0
			\end{array}\right)
			\left( \begin{array}{cc}
				1 & (1-\chi)\frac{p_{12}}{2\lambda_{1}}\\
				(1-\chi)\frac{p_{21}}{2\lambda_{2}} & 1
			\end{array}\right) \\
			&= \tilde{P}_2 + \frac{r}{1-r} \tilde{P}_2 -  \frac{1}{1-r} \left((1-\chi)\frac{p_{12}p_{21}}{2\lambda_1} - \frac{p_{12}p_{21}}{2\lambda_2} \right) I   -  \frac{(1-\chi)}{1-r} \left(
			\begin{array}{cc}
				0 &  \frac{p_{12}^2p_{21}}{4\lambda_1^2}\\
				  \frac{p_{12}p_{21}^2}{4\lambda_2^2} & 0
			\end{array}
			\right)\\
			& = \tilde{P}_2 \; \text{ mod } \; \Gext(-1,-1,2,0,N,2,1).
		\end{aligned}
	\]
	Thus  $	\sigma(\tilde{\mathcal{N}}_3)  \left(\sigma(\mathcal{D}_1) -  \sigma(\tilde{P}_2)\right) \sigma(\mathcal{N}_3) =  \mathcal{D}_1 -\chi \tilde{P}_2 \; \text{ mod } \; \Gext(-1,-1,2,0,N,2,1) .$ Note that $	\sigma(\chi\tilde{P}_2) = 0$ in $\zint(N)$ and 
	\[
		\sigma(\chi\tilde{P}_2) \in \Gint(0,0,2N,1) \cap \Gmid(0,0,1,2N,0,1), \; \sigma(\chi\tilde{P}_2) \equiv 0 \text{ in } \zext(N).
	\]
	From the structure of $\tilde{P}_2$ and the estimate on the second derivative in time of the characteristics, 
	\[
		\sigma(D_t\tilde{\mathcal{N}}_3) \in \Gext(-1,-1,2,1,N,2,1), \; \sigma(D_t\tilde{\mathcal{N}}_3) \equiv 0 \text{ in } \zint(N) \cup \zmid(N).
	\] 
	Let
	\begin{equation}\label{p34I}
		\begin{aligned}
				P_3 &= \tilde{\mathcal{N}}_3 \tilde{P}_1 (I+K_3)^{-1} \mathcal{N}_3, \\
				P_4 &=  \tilde{\mathcal{N}}_3 \left(\tilde{P}_2 + \tilde Q_2- \mathcal{D}_1\right)(I+K_3)^{-1} \mathcal{N}_3  + \mathcal{D}_1   - D_t\tilde{\mathcal{N}}_3.\\
				Q_3 &= \tilde{\mathcal{N}}_3 \tilde Q_1 (I+K_3)^{-1} \mathcal{N}_3.
		\end{aligned}
	\end{equation}
	Since
	\[
		\Gext(-1,-1,2,0,N,2,1) \subset \Gext(-1,-1,2,1,N,2,1),
	\]
	it is easy to see that 
	\begin{equation}\label{p34II}
		\begin{aligned}
			\sigma(P_3)  &\in \Gint(0,0,N,0) \cap \Gmid(0,0,0,N,2,1)\\
			& \qquad \cap \Gext(0,0,0,0,N,2,1),\\
			\sigma(P_4)  &\in \Gint(0,0,N,1) \cap \Gmid(0,0,1,N,0,1)\\
			& \qquad \cap \Gext(-1,-1,2,1,N,2,1),\\
			\sigma(Q_3) &\in \Gint(1,1,N,1), \sigma(Q_3) \equiv 0 \text{ in } \zmid(2N) \cup \zext(2N).
		\end{aligned}
	\end{equation}
	Let us summarize the above discussion in the following proposition.
		\begin{prop}\label{D3}
		The solution $V_2(t,x)$ to the first order system (\ref{FOS20}) is given by
		\[
		V_2(t,x) = (I + K_3(t,x,D_x))^{-1}\mathcal{N}_3(t,x,D_x){V}_3(t,x)
		\]	
		where $V_3(t,x)$ is solution to first order system
		\begin{equation}\label{FOS3}
			(D_{t} -\mathcal{D}_1 +P_{3} +P_4 +Q_3)V_{3}= F_3, \; V_3(0,x)= \tilde{\mathcal{N}_3}(0,x,D_x)V_2(0,x) 
		\end{equation}
		The matrix pseudodifferential operator $\mathcal{D}_1$ is as in Proposition \ref{D1} while $P_{3}, P_4, Q_3$ are as in (\ref{p34I})-(\ref{p34II}).
	\end{prop}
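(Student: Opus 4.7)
The plan is to imitate the strategy already established in Propositions \ref{D1} and \ref{D2}: perform the change of variable $V_3 = \tilde{\mathcal{N}}_3 V_2$, compute $D_t V_3$, conjugate the differential operator in (\ref{FOS20}) by $\tilde{\mathcal{N}}_3$ and $\mathcal{N}_3$, and then collect leftover terms into the prescribed symbol classes. The algebraic identities have in fact already been laid out in the passage preceding the statement, so the proof is mainly a matter of packaging these computations and verifying the symbol class bookkeeping.

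First, I would verify that $\tilde{\mathcal{N}}_3$ and $\mathcal{N}_3$ are well defined and elliptic. This uses the observation that $\vert r(t,x,\xi)\vert \leq \tfrac{C}{N^2}$ in $\zext(N)$ (from the symbol class of $p_{12}, p_{21}$) and vanishes in $\zint(N)\cup \zmid(N)$, so $N$ large guarantees $\vert r\vert \leq 1/2$ globally; this secures the formula for $\sigma(\tilde{\mathcal{N}}_3)$. A short calculation of the composition symbol then yields $\mathcal{N}_3 \tilde{\mathcal{N}}_3 = I+K_3$ with $\sigma(K_3)$ in the claimed class, and the Neumann series argument (analogous to the ones used for $K_1,K_2$) gives the invertibility of $I+K_3$ once $k \geq k_3$ is large.

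Next, I would apply $\tilde{\mathcal{N}}_3$ to $D_t V_2 = (\mathcal{D}_1 - \tilde{P}_1 - \tilde{P}_2 - \tilde{Q}_1 - \tilde{Q}_2)V_2 + F_2$, substitute $V_2 = (I+K_3)^{-1}\mathcal{N}_3 V_3$, and pull $D_t$ through $\tilde{\mathcal{N}}_3$ to pick up the commutator term $D_t \tilde{\mathcal{N}}_3$. The key algebraic identity is
\begin{equation*}
\sigma(\tilde{\mathcal{N}}_3)\bigl(\sigma(\mathcal{D}_1) - \sigma(\tilde{P}_2)\bigr)\sigma(\mathcal{N}_3) = \mathcal{D}_1 - \chi \tilde{P}_2 \quad \text{mod } \Gext(-1,-1,2,0,N,2,1),
\end{equation*}
which the passage above already verifies by direct matrix multiplication and the relation $(1-\chi)^2 p_{12}p_{21}/(4\lambda_1\lambda_2) = r$. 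Combined with the fact that $\sigma(D_t\tilde{\mathcal{N}}_3) \in \Gext(-1,-1,2,1,N,2,1)$ and vanishes in $\zint(N)\cup\zmid(N)$ (using the bound (\ref{root2}) on $\partial_t\lambda_j$ together with the derivative of the cutoff through $\tilde{t}_{x,\xi}$), the leftover pieces fit exactly into $P_4$ as defined in (\ref{p34I}).

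Finally, I would verify the symbol class memberships listed in (\ref{p34II}). For $P_3 = \tilde{\mathcal{N}}_3 \tilde{P}_1 (I+K_3)^{-1}\mathcal{N}_3$ this is a direct composition: $\tilde P_1$ already lies in the right class by Proposition \ref{D2}, and since $\tilde{\mathcal{N}}_3$, $\mathcal{N}_3$, and $(I+K_3)^{-1}$ are of order $(0,0)$ and equal the identity in $\zint(N) \cup \zmid(N)$ modulo zone-$\zext$ correction, the conjugation preserves the class. For $P_4$, one combines the identity above with the estimate $\sigma(\tilde Q_2)$ of Proposition \ref{D2} and the commutator $D_t\tilde{\mathcal{N}}_3$; the various lower-order remainders from the conjugation absorb into the class $\Gext(-1,-1,2,1,N,2,1)$ in the external zone, while in the internal and middle zones one retains the classes of $\tilde{P}_1, \tilde{P}_2, \tilde Q_2$ unchanged. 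The operator $Q_3$ inherits the support property $\sigma(Q_3)\equiv 0$ in $\zmid(2N)\cup\zext(2N)$ from $\tilde{Q}_1$ together with the fact that $\tilde{\mathcal{N}}_3,\mathcal{N}_3$ reduce to the identity there.

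The main obstacle I anticipate is not any single conjugation identity but rather the careful zonewise bookkeeping: because each of $\mathcal{D}_1, \tilde P_2, D_t\tilde{\mathcal{N}}_3$ has a different singular behavior in each of $\zint, \zmid, \zext$, one must check in each zone that the residual terms produced by the Neumann expansion of $(I+K_3)^{-1}$ and by the composition calculus of Appendix I do not exceed the advertised orders. In particular, verifying that the remainder from $\tilde{\mathcal{N}}_3(\mathcal{D}_1 - \tilde{P}_2)\mathcal{N}_3 - (\mathcal{D}_1 - \chi\tilde{P}_2)$ really lies in $\Gext(-1,-1,2,1,N,2,1)$ in the external zone (gaining one order of decay both in $\xi$ and in $x$) is the delicate point, and it relies crucially on $r = \mathcal{O}((\theta(t)/t)^2/(\Phi\langle\xi\rangle_k)^2)$ in $\zext(N)$.
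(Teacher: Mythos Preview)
Your proposal is correct and follows essentially the same approach as the paper: the paper's proof is precisely the discussion preceding the proposition statement, consisting of the change of variable $V_3=\tilde{\mathcal{N}}_3 V_2$, the explicit matrix computation of $\sigma(\tilde{\mathcal{N}}_3)\bigl(\sigma(\mathcal{D}_1)-\sigma(\tilde P_2)\bigr)\sigma(\mathcal{N}_3)$, the symbol class of $D_t\tilde{\mathcal{N}}_3$, and the resulting definitions (\ref{p34I})--(\ref{p34II}). Your identification of the delicate point---that the external-zone remainder gains one order in both $x$ and $\xi$ thanks to the estimate $r=\mathcal{O}\bigl((\theta(t)/t)^2/(\Phi\langle\xi\rangle_k)^2\bigr)$---matches exactly what the paper uses.
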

	\begin{rmk}
		The localization technique in our case has led to such a factorization where the regularity in the $t$-variable was lost along the way. Infact, after the localization procedure no new control of the $t$-derivative of the symbol $P_4$ was available.
	\end{rmk}

	\begin{rmk}
		Let us explain the philosophy of our approach. We use a careful amalgam of a localization technique on the extended phase space (defined in Section \ref{zones}) and the diagonalization procedure already available in the literature (see \cite{yag,KuboReis}) to handle the singularity. Note that we have restricted the singularity arising from the first $t$-derivative to $\zmid(N)$ and the one from the second $t$-derivative to $\zext(N).$ This kind of localization of the singularities allows one to come with a function (see (\ref{psi2})) with {\itshape good} estimate as in (\ref{psi3}) that majorizes the symbol of the operator $P_4 + Q_3$. It is woth noting that we do not need the so called \enquote{perfect diagnalization} in our analysis. 
	\end{rmk}

	\subsection{An Upper Bound for the Lower Order Terms with Singularity} \label{major}
	In this section, we define a function that majorizes $ \sigma(P_4) + \sigma(Q_3)$.	
	 Consider a smooth function $\mathfrak{M}_{1} \var$ of the form
	\begin{equation}\label{psi2}
		\begin{aligned}
				\mathfrak{M}_{1}\var = &
				= \kappa \Big( \chi(t/t_{x,\xi}) \om\japxik \tilde \theta(t)+ (1-\chi(t/t_{x,\xi}) \Big( 	\chi(t/\tilde{t}_{x,\xi})\frac{\theta(t)}{t} \\
				& \qquad \qquad + \frac{(1-\chi(t/\tilde{t}_{x,\xi}))}{\P\japxik} 	\frac{\theta(t)^2}{t^2} e^{\psi(t)} \tilde \theta(t)^2  \Big)\Big)
		\end{aligned}
	\end{equation}
   where $\kappa>0$ is chosen in such way that we have 
   \[
   		|\sigma(P_4)| + |\sigma(Q_3)|\leq \mathfrak{M}_{1}.
   \]
   From Propositions \ref{p1} - \ref{p3}, $t^{1-\varepsilon} \mathfrak{M}_{1} \in  C([0,T]; G^{1,1}(\Phi,\go)),$ for every $\varepsilon \in(0,1).$ 
   
   When $\tilde \theta(t)$ is unbounded near $t=0$, we use following estimate
   \begin{equation}\label{rel}
   	\int_{0}^{t} {\tilde \theta}(s)ds \lesssim t \tilde{\theta}(t).
   \end{equation}
   Observe that the estimate (\ref{rel}) is natural in the context of logarithmic-type functions. 
   Using the estimate (\ref{rel}), we readily have
    \begin{align*}
   	\int_0^{2t_{x,\xi}} \vert \partial_\xi^\alpha D_x^\beta \mathfrak{ M}_{1}(s,x,\xi) \vert ds 
   	&\leq 	\kappa_{\alpha\beta}  \om \P^{-\vert \beta \vert} \japxik^{1-\vert \alpha \vert} 	\int_0^{2t_{x,\xi}} \tilde \theta(t)dt\\
   	&\leq 	\kappa_{\alpha\beta}  \om \P^{-\vert \beta \vert} \japxik^{1-\vert \alpha \vert} 2t_{x,\xi} \tilde \theta(2t_{x,\xi}) \\
   	&\leq 	\kappa_{\alpha\beta}  \P^{-\vert \beta \vert} \japxik^{-\vert \alpha \vert} \tilde \theta(h) \theta(h) .
   	\end{align*}
   Similarly using the definitions of $t_{x,\xi}$ and $\tilde t_{x,\xi},$ we have the following estimates
   
   \begin{align*}
   		\int_{t_{x,\xi}}^{2\tilde t_{x,\xi}} \vert \partial_\xi^\alpha D_x^\beta  \mathfrak{ M}_{1}(s,x,\xi) \vert ds &\leq \kappa_{\alpha\beta}  \P^{-\vert \beta \vert} \japxik^{-\vert \alpha \vert} \left| \int_{t_{x,\xi}}^{2\tilde t_{x,\xi}} \frac{\theta(s)}{s} ds \right| \\
   		&\leq \kappa_{\alpha\beta} \P^{-\vert \beta \vert} \japxik^{-\vert \alpha \vert} 	 \theta(h) \left| \int_{t_{x,\xi}}^{2\tilde t_{x,\xi}} \frac{1}{s} ds \right|\\
   		&\leq \kappa_{\alpha\beta} \P^{-\vert \beta \vert} \japxik^{-\vert \alpha \vert}   \theta(h)(\ln 2 + \vert\ln\tilde\theta(h)\vert + \psi(h)) ,
   	 \end{align*}
   	 \begin{align*}
   		\int^{T}_{\tilde t_{x,\xi}} \vert \partial_\xi^\alpha D_x^\beta  \mathfrak{ M}_{1} (s,x,\xi) \vert ds 
   		&\leq \kappa_{\alpha\beta}  \P^{-1-\vert \beta \vert} \japxik^{-1-\vert \alpha \vert} 	\left| \int^{T}_{\tilde t_{x,\xi}} \frac{\theta(s)^2}{s^2} e^{\psi(s)} \tilde \theta(s)^2ds \right| \\
   		&\leq \kappa_{\alpha\beta}  \P^{-1-\vert \beta \vert} \japxik^{-1-\vert \alpha \vert}  \theta(h)^2 \tilde\theta(h)^2e^{\psi(h)}  \left| \int^{T}_{\tilde t_{x,\xi}} \frac{1}{s^2} ds \right|   \\
   		&\leq \kappa_{\alpha\beta}  \P^{-1-\vert \beta \vert} \japxik^{-1-\vert \alpha \vert}  \theta(h)^2 \tilde\theta(h)^2e^{\psi(h)} \frac{\P\japxik}{N\tilde\theta(h) \theta(h)e^{\psi(h)}}\\
   		&\leq \kappa_{\alpha\beta}  \P^{-\vert \beta \vert} \japxik^{-\vert \alpha \vert} \tilde\theta(h) \theta(h). 
   \end{align*}
	Thus we have,
	\begin{equation}
		\label{psi3}
		\int_{0}^{T} \vert D_x^\beta \partial_\xi^\alpha  \mathfrak{ M}_{1}(t,x,\xi)\vert dt \leq \kappa_{\alpha\beta} \Theta(x,\xi) \P^{-\vert \beta \vert} \japxi^{-\vert \alpha \vert} .
	\end{equation}
	The function $\mathfrak{M}_1$ plays an important role in performing conjugation and thus in quantifying the loss of regularity.
	
	\subsection{Infinite Loss of Regularity via Energy Estimate}\label{Energy1}
	In this section, we derive an energy estimate when (\ref{ineq2}) is satisfied and show that the loss is infinite. The case for finite loss i.e., when (\ref{ineq}) is satisfied follows in similar lines.
	
	Consider the operator $L$ defined by
	\begin{equation}
		L= D_t - \mathcal{D}_1 + P_3 + P_4 + Q_3,
	\end{equation}
	the matrix pseudodifferential operators are as in Proposition \ref{D3}. 
	Then the first order system (\ref{FOS3}) is equivalent to 
	\begin{equation}
		\label{FOS4}
		LV_3= F_3, \; V_3(0,x) =\tilde{ \mathcal{N}}_3(0,x,D_x) V_2(0,x).
	\end{equation}
	Note that to prove the estimate (\ref{est}), it is sufficient to consider the case $s=(0,0)$ as the operator $\P^{s_2} \la D\rak^{s_1}L\la D\rak^{-s_1}\P^{-s_2}$, where $s=(s_1,s_2)$ is the index of the Sobolev space, satisfies the same hypotheses as $L$. 
	
	We perform a change of variable, which allows us to control lower order terms. We set 
	\begin{equation}\label{one}
		V_4(t,x)= W_{1}(t,x,D_x){V_3}(t,x),
	\end{equation}
	where $W_{1}$ is a infinite order pseudodifferential operator with 
	\[
	\sigma(W_{1})(t,x,\xi) =\exp{\Big(\int_t^{T^*_1}{\mathfrak{M}_{1}(r,x,\xi)dr\Big)}}
	\]
    for $\mathfrak{M}_{1}$ is as in (\ref{psi2}).
	We have $V_4(0,x) = W_1(0,x,D_x) V_3(0,x)$ and for $0<t\leq T$
	\begin{equation*}
		||{V_3}(t)\Vl(0e,\tilde\kappa_0) \leq C ||V_4(t)||_{L^2},
	\end{equation*}
	where $ C>0$ and $e=(1,1)$. Here $\tilde\kappa_{0}$ is the constant $\kappa_{\alpha\beta}$ in (\ref{psi3}) with $\alpha=\beta=0$. Let $\tilde{W}_{1}(t,x,D_x)$ be such that $\sigma(\tilde{W}_{1}) = \exp{\Big(-\int_t^{T^*_1} \mathfrak{M}_{1}(r,x,\xi)dr\Big)}$. Then, by Theorem \ref{conju},
	\[
	\begin{aligned}
		\tilde{W}_{1}(t,x,D_x) {W}_{1}(t,x,D_x) &= I + K_4^{(1)}(t,x,D_x), \\
		{W}_{1}(t,x,D_x) \tilde{W}_{1}(t,x,D_x) &= I + K_4^{(2)}(t,x,D_x)
	\end{aligned}
	\]
	where $\T^{-1} \sigma(K_4^{(l)}) \in C([0,T]; \GT(-1,-1)), l=1,2.$ We choose $k>k_4$ for large $k_4$ so that the operator norm of $K_4^{(l)}$ is strictly lesser than $1.$ This guarantees
	\[
	(I + K_4^{(l)}(t,x,D_x))^{-1} = \sum_{j=0}^{\infty} (-1)^j K_4^{(l)}(t,x,D_x)^j \in C([0,T]; OP\GT(0,0)).
	\]		
	For the sake of simplicity, let us denote the operators $\big(I + K_4^{(1)}\big)^{-1}$ and $\big(I + K_4^{(2)}\big)^{-1}$ by $\mathcal{K}_{1}$ and $\tilde{\mathcal{K}}_{1},$ respectively.
	
	In the following we arbitrarily fix $\varepsilon \in (0,1)$ when $\tilde\theta$ is unbounded while for the bounded case we set $\varepsilon=1.$ We see that the pseudodifferential system (\ref{FOS4}) is equivalent to
	\begin{equation}
		\label{FOS5}
		\tilde{L}(t,x,D_t,D_x)V_4(t,x)= F_4(t,x), 
	\end{equation} 
	where $	\tilde{L} = L- i\mathfrak{M}_{1}(t,x,D_x) I + P_5(t,x,D_x),$ $F_4 = W_{1}(t,x,D_x) F_3$ and 
	\begin{align*}  
		\sigma(P_5) &=   \tilde{\mathcal{K}}_{1}W_{1}\mathcal{K}_{1}^{-1}\left( \big( P_3+P_4+Q_3 -\mathcal{D}_1\big)\mathcal{K}_{1} + D_t\mathcal{K}_{1}  \right) \tilde{W}_{1} + (\tilde{\mathcal{K}}_{1}W_{1}D_t\tilde{W}_{1} + i \mathfrak{M}_{1}  I )	\\
		&\qquad \qquad \quad - (P_3 +P_4+Q_3 -\mathcal{D}_1).		
	\end{align*}
	From Theorem \ref{conju}, $t^{1-\varepsilon}\T^{-1} \sigma(P_5) \in {C([0,T]; G^{0,0}(\Phi,\gt)}.$ $T_1^*$ is chosen in such a way that all the above conjugations with operator $W_1$ are valid in view of Theorem \ref{conju}.
	
	Let $\kappa_1>0$ be such that 
	\begin{equation}
		\label{mj}
		\vert \sigma(P_5) \vert \leq \mathfrak{M}_{2} \var =\kappa_1 t^{-1+\varepsilon}\Theta(x,\xi).
	\end{equation}	
	We make a further change of variable
	\begin{equation}\label{two}
		V_5(t,x) = 	W_2(t,x,D_x)V_4(t,x)
	\end{equation}
	where $W_{2}$ is an infinite order pseudodifferential operator with 
	\[
	\sigma(W_{2})(t,x,\xi) =  \exp\left\{ \int^{T_2^*}_t \mathfrak{M}_{2}(s,x,\xi) ds \right\}.
	\]	
	We have $V_5(0,x) = W_2(0,x,D_x) V_4(0,x)$ and 
	\begin{equation*}
		||{V_4}(t)\Vl(0e,\kappa_1^*) \leq C ||V_5(t)||_{L^2}, 
	\end{equation*}	
    where $\kappa_1^*(t) = \tilde\kappa_1({T^*_2}^\varepsilon-t^\varepsilon)/\varepsilon.$
	Let $\tilde{W}_{2}(t,x,D_x)$ be such that $\sigma(\tilde{W}_{2}) = e^{-\int^{T_2^*}_t\mathfrak{M}_{2}(s,x,\xi) ds}.$ Then, by Theorem \ref{conju}, 
	\[
	\begin{aligned}
		\tilde{W}_{2}(t,x,D_x) {W}_{2}(t,x,D_x) &= I + K_5^{(1)}(t,x,D_x), \\
		{W}_{2}(t,x,D_x) \tilde{W}_{2}(t,x,D_x) &= I + K_5^{(2)}(t,x,D_x),
	\end{aligned}
	\]
	where $\Theta(x,\xi)^{-1} \sigma(K_5^{(l)}) \in C([0,T]; G^{-1,-1}(\Phi,\gt)  ), l=1,2.$ 	
	We choose $k>k_5$ for large $k_5$ so that the operator norm of $K_5^{(l)}$ is strictly lesser than $1.$ This guarantees
	\[
	(I + K_5^{(l)}(t,x,D_x))^{-1} = \sum_{j=0}^{\infty} (-1)^j K_5^{(l)}(t,x,D_x)^j \in C([0,T]; OP\GT(0,0)).
	\]	
	Let us denote the operators $\big(I + K_5^{(1)}\big)^{-1}$ and $\big(I + K_5^{(2)}\big)^{-1}$ by $\mathcal{K}_{2}$ and $\tilde{\mathcal{K}}_{2},$ respectively.
	
	We see that the pseudodifferential system (\ref{FOS5}) is equivalent to
	\begin{equation}
		\label{FOS6}
		\big(\tilde{L}(t,x,D_t,D_x) - i\mathfrak{M}_{2}(t,x,D_x) I + P_6(t,x,D_x) \big)V_5(t,x)= F_5(t,x), 
	\end{equation} 
	where $F_5 = W_{2}(t,x,D_x) F_4$ and 
	\begin{align*}  
		\sigma(P_6) &=   \tilde{\mathcal{K}}_{2}W_{2}\mathcal{K}_{2}^{-1}\left( \big(  P_3+P_4+Q_3+P_5 -\mathcal{D}_1 \big) \mathcal{K}_{2} + D_t\mathcal{K}_{2}  \right) \tilde{W}_{2} \\& \qquad- (P_3 +P_4 +Q_3+P_5 -\mathcal{D}_1)
		 + ( \tilde{\mathcal{K}}_{2}W_{2}D_t\tilde{W}_{2} + i \mathfrak{M}_2I ),
	\end{align*}
	with $  t^{1-\varepsilon}\sigma(P_6)  \in {C([0,T]; \GT(0,0)}.$ $T_2^*$ is chosen in such a way that all the above conjugations with operator $W_2$ are valid in view of Theorem \ref{conju}.
	
	Let us write down the first order system (\ref{FOS6}) explicitly as below
	\begin{equation*}
		\partial_tV_2 = \left(i\mathcal{D}_1-\left( \mathfrak{M_1}I + iP_4 + iQ_3 + \mathfrak{M_2}I +iP_5  \right)- iP_6 -iP_3 \right)V_5+ i F_5.
	\end{equation*}
	Here, the diagonal matrix operator, $i\mathcal {D}_1$ is of the form 
	\[
	i\mathcal{D}_1 = \text{diag}\left\{i\lambda_1, i\lambda_2\right\}.
	\]
	Observe that the symbol $d(t,x,\xi)$ of the operator $i\mathcal{D}_1-i\mathcal{D}_1^*$ is such that 
	\begin{linenomath*}
		\[
		d \in \Gint(0,0,N,0) \cap \Gmid(0,0,0,N,1,1) \cap \Gext(0,0,0,0,N,1,1).
		\]
	\end{linenomath*}
	It follows from Propositions \ref{p1}-\ref{p3} and Remark \ref{lb} that 
	\begin{linenomath*}
		\[
		t^{1-\varepsilon}d \in C([0,T];\G(0,0)).
		\]
	\end{linenomath*}
	Hence, by Calderon-Vaillancourt theorem ,
	\begin{equation}
		\label{diag}
		\begin{aligned}
			2 \Re \la i\mathcal{D}_1V_5,V_5 \ra &\leq \frac{C}{t^{1-\varepsilon}} \la V_5,V_5 \ra, \; C>0.
		\end{aligned}
	\end{equation}

	Further,
	by the choice of the functions, $\mathfrak{M_1}\var+\mathfrak{M_2}(x,\xi)$
	\[
	\Re(  (\mathfrak{M_1}\var+\mathfrak{M_2}(t,x,\xi))I + i\sigma(P_4) +i\sigma(Q_3) +i\sigma(P_5)  )\geq 0.
	\]
	Assuming
	\[
	\om \lesssim \P,
	\]
	we apply sharp G\r{a}rding inequality (see Thereom \ref{sg} in Appendix II and \cite[Theorem 18.6.14]{Horm}) to $2\Re(  \mathfrak{M_1}I + iP_4 +iQ_3)$ with the metric $\go$ and Planck function $h(x,\xi)=(\Phi(x) \japxik) ^{-1}$ and to $2\Re(  \mathfrak{M_2}I + iP_5  )$ with the metric $\gt$ and Planck function $\tilde h(x,\xi)=\T(\Phi(x) \japxik) ^{-1}.$ We obtain
	\begin{equation}
		\label{psiA}
		2\Re \la (  (\mathfrak{M_1}+\mathfrak{M_2})I + iP_4+iQ_3+iP_5  )V_5,V_5 \ra_{L^2} \geq - \frac{C}{t^{1-\varepsilon}} \la V_5,V_5 \ra_{L^2}, \; C>0.
	\end{equation}
	Since $t^{1-\varepsilon}\sigma(iP_3+iP_6)$ is uniformly bounded, by Calderon-Vaillancourt theorem we have
	\begin{equation}
		\label{R2}
		-2\Re \la i(P_3+ P_6)V_5,V_5\ra \leq \frac{C}{t^{1-\varepsilon}} \la V_5,V_5\ra.
	\end{equation}
	From (\ref{diag})-(\ref{R2}) it follows that
	\begin{equation*}
		\label{K2}
		2 \Re \la ( i\mathcal{D}_1-\left( (\mathfrak{M_1}+\mathfrak{M_2})I + iP_4+iQ_3 +iP_5 \right)- iP_6-iP_3 ) V_5,V_5 \ra_{L^2} \leq  \frac{C}{t^{1-\varepsilon}} \la V_5,V_5 \ra_{L^2}.
	\end{equation*}
	This yields 
	\[
	\partial_t ||V_5(t, \cdot)||_{L^2}^2 \leq C( t^{-1+\varepsilon}\Vert V_5(t,\cdot)\Vert_{L^2}^2 + \Vert F_5(t, \cdot) \Vert_{L^2}^2), \; 0\leq t \leq T.
	\]
	Considering the above inequality as a differential inequality, we apply Gronwall's lemma and obtain that
	$$
	\|V_5(t, \cdot)\|_{L^{2}}^{2} \leq C_{\varepsilon} \left( \|V_5(0, \cdot)\|_{L^{2}}^{2}+ \int_{0}^{t}\left\|F_5(\tau, \cdot)\right\|_{L^{2}}^{2} d \tau \right), \quad t \in [0,T^*].
	$$
	If $\tilde\theta$ is unbounded $C_\varepsilon = C'e^{T^\varepsilon/\varepsilon}$ for a fixed $\varepsilon \in (0,1),$ else $C_\varepsilon = C''$ for some $C',C''>0.$ $T^*=\min\{T_1^*,T_2^*,\delta_1,\delta_2\}$ where $\delta_1$ and $\delta_2$ are related to the initial datum and the right hand side of the Cauchy problem (\ref{eq1}).
	
	This proves the well-posedness of the auxiliary Cauchy problem (\ref{FOS6}). Note that the solution $V_3$ to (\ref{FOS4}) belongs to $C\left([0, T] ; \sobol(s,{\tilde\kappa(t)})\right),\tilde\kappa(t) = \tilde\kappa_0 + \tilde\kappa_1^*(t)  .$ Returning to our original solution $u=u(t, x)$ we obtain the estimate (\ref{est}) with
	$$
	u \in C\left([0, T] ;  \sobol(s+e,{\tilde\kappa(t)}) \right) \bigcap C^{1}\left([0, T] ; \sobol(s,{\tilde\kappa(t)})  \right).
	$$
	This proves Theorem  \ref{result3}.
	
	\subsection{Finite Loss of Regularity via Energy Estimate}\label{Energy2}
	
	Let us assume that (\ref{ineq}) is satisfied. The energy estimate (\ref{est2}) in such case follows in similar lines to the infinite loss case except that we need to replace the changes of variable (\ref{one}) and (\ref{two}) with the following counterparts
	\[
	\begin{aligned}
		V_4(t,x) &= W'_{1}(t,x,D_x){V_3}(t,x),\\
		V_5(t,x) &= W'_2(t,x,D_x)V_4(t,x),
	\end{aligned}
	\] 
	where
	\[
	\begin{aligned}
		\sigma(W'_{1})(t,x,\xi) &= \exp\left\{-\int_{0}^{t} \mathfrak{M}_{1}(r,x,\xi)dr\right\}, \\
		\sigma(W'_{2})(t,x,\xi) &=   \exp\left\{-\int_{0}^{t} \mathfrak{M}_{2}(r,x,\xi)dr \right\}
	\end{aligned}
	\]
	for $\mathfrak{M}_{1}$ and $\mathfrak{M}_{2}$ as in (\ref{psi2}) and (\ref{mj}). Note that operators $W_1'$ and $W_2'$ are finite order pseudodifferential operators in view of (\ref{ineq}). Hence, depending on the order these operators we have zero, arbitrarily small or finite loss.

 \section{Cone Condition}\label{cone}
 Existence and uniqueness follow from the a priori estimate established in the previous section. It now remains to prove the existence of cone of dependence. 
 
 We note here that the $L^1$ integrability of the singularity plays a crucial in arriving at the finite propagation speed. The implications of the discussion in \cite[Section 2.3 \& 2.5]{JR} to the global setting suggest that if the Cauchy data in (\ref{eq1}) is such that $f \equiv 0$ and $f_1,f_2$ are supported in the ball $\vert x \vert \leq R$, then the solution to Cauchy problem (\ref{eq1}) is supported in the ball $\vert x \vert \leq R+\gamma_0 \om \tilde\theta(t)$. The quantity $t\tilde\theta(t)$ is bounded in $[0,T]$. The constant $\gamma_0$ is such that the quantity $\gamma_0 \om\tilde\theta(t)$ dominates the characteristic roots, i.e.,
 \begin{equation}
 	\label{speed}
 	\gamma_0= \sup\Big\{\sqrt{a(t,x,\xi)}\om^{-1} \tilde\theta(t)^{-1}:(t,x,\xi) \in[0,T] \times \R^n_x \times \R^n_\xi,\:|\xi|=1\Big\}.
 \end{equation}
 Note that the support of the solution increases as $|x|$ increases since $\om$ is monotone increasing function of $|x|$.
 
 In the following we prove the cone condition for the Cauchy problem $(\ref{eq1})$. Let $K(x^0,t^0)$ denote the cone with the vertex $(x^0,t^0)$:
 \begin{linenomath*}
 	\[
 	K(x^0,t^0)= \{(t,x) \in [0,T] \times \R^n : |x-x^0| \leq \gamma_0 \om \tilde\theta(t^0-t) (t^0-t)\}.
 	\]
 \end{linenomath*}
 Observe that the slope of the cone is anisotropic, that is, it varies with both $x$ and $t$.
 
 \begin{prop}
 	The Cauchy problem (\ref{eq1}) has a cone dependence, that is, if
 	\begin{equation}\label{cone1}
 		f\big|_{K(x^0,t^0)}=0, \quad f_i\big|_{K(x^0,t^0) \cap \{t=0\}}=0, \; i=1, 2,
 	\end{equation}
 	then
 	\begin{equation}\label{cone2}
 		u\big|_{K(x^0,t^0)}=0.
 	\end{equation}
 \end{prop}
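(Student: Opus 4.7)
The plan is to derive the cone condition from the finite propagation speed recalled in the paragraph preceding the proposition, adapted from \cite[Section 2.3 \& 2.5]{JR} to the present global setting, via a standard domain-of-dependence argument. By linearity of $P$ it suffices to show $u(\bar t, \bar x) = 0$ for an arbitrary $(\bar t, \bar x) \in K(x^0, t^0)$.

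I would first introduce the backward cone issued from $(\bar t, \bar x)$,
\[
K'(\bar x, \bar t) = \{(s, y) \in [0, \bar t] \times \R^n : |y - \bar x| \leq \gamma_0\, \omega\, \tilde\theta(\bar t - s)(\bar t - s)\},
\]
and establish the geometric inclusion $K'(\bar x, \bar t) \subseteq K(x^0, t^0)$. Indeed, for $(s, y) \in K'(\bar x, \bar t)$ the triangle inequality together with $(\bar t, \bar x) \in K(x^0, t^0)$ gives
\[
|y - x^0| \leq \gamma_0 \omega\, \tilde\theta(\bar t - s)(\bar t - s) + \gamma_0 \omega\, \tilde\theta(t^0 - \bar t)(t^0 - \bar t),
\]
which one must bound by $\gamma_0 \omega\, \tilde\theta(t^0 - s)(t^0 - s)$. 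This uses the monotonicity of $\tilde\theta$, the relation (\ref{rel2}) which forces $r \mapsto r\tilde\theta(r)$ to behave sub-additively for the logarithmic-type growth at $0$, and the sub-additivity and monotonicity of $\omega$ imposed in Section \ref{metric}.

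Once the inclusion is secured, hypothesis (\ref{cone1}) gives $f|_{K'(\bar x, \bar t)} \equiv 0$ and $f_i|_{K'(\bar x, \bar t) \cap \{t = 0\}} \equiv 0$ for $i = 1, 2$. To pass from vanishing of the data on the backward cone to $u(\bar t, \bar x) = 0$, I would construct an auxiliary problem whose coefficients agree with those of $P$ on a slight enlargement of $K'(\bar x, \bar t)$ and whose data are compactly supported in a thin annular neighborhood of the base of $K'(\bar x, \bar t)$. By Theorem \ref{result1} (resp.\ Theorem \ref{result3}) this modified problem has a unique solution $\tilde u$ in the relevant weighted Sobolev space. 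The finite propagation speed quoted from \cite{JR} — whose adaptation to our setting is enabled precisely by the $L^1$-integrability of $\theta(t)/t$ on $[0, T]$ — then shows that the contribution of data supported outside the enlargement cannot reach $(\bar t, \bar x)$, so that $\tilde u = u$ on $K'(\bar x, \bar t)$ while simultaneously $\tilde u(\bar t, \bar x) = 0$. Hence $u(\bar t, \bar x) = 0$, and (\ref{cone2}) follows.

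The hardest step is making the sub-cone inclusion $K'(\bar x, \bar t) \subseteq K(x^0, t^0)$ rigorous in this \emph{anisotropic} setting, where the slope $\gamma_0 \omega\, \tilde\theta(t^0 - t)$ depends both on the spatial weight $\omega$ and on the temporal growth $\tilde\theta$. The ``transitivity of backward cones'' is not automatic: it requires sub-additivity in both the spatial direction (handled by the hypotheses on $\omega$) and the temporal direction (handled by the nonincreasing nature of $\tilde\theta$ together with (\ref{rel2})); the interplay of the two is exactly what makes the choice of $\gamma_0$ in (\ref{speed}) the correct propagation speed in the global setting, and verifying this compatibility is the substantive content of the proof.
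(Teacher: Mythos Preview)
Your approach and the paper's diverge substantially. The paper does \emph{not} argue by nested backward cones and a direct appeal to finite propagation speed. Instead it regularizes in time: for each small $\varepsilon>0$ it sets $P_\varepsilon(t,x,D_t,D_x):=P(t+\varepsilon,x,D_t,D_x)$, whose coefficients are $C^2$ in $t$ on $[0,T-\varepsilon_0]$ with no singularity at $t=0$. For these shifted operators the classical cone condition (for non-singular coefficients) gives $v_\varepsilon\equiv 0$ on $K(x^0,t^0)$. The heart of the proof is then an analytic limit: one estimates $v_{\varepsilon_1}-v_{\varepsilon_2}$ via the a~priori estimate (\ref{est2}) (or (\ref{est})), using a Taylor bound
\[
|a_{i,j}(\tau+\varepsilon_1,x)-a_{i,j}(\tau+\varepsilon_2,x)|\le \om^{2}\Big|\int_{\tau+\varepsilon_2}^{\tau+\varepsilon_1}\frac{\theta(r)}{r}\,dr\Big|,
\]
whose $L^1$-integrability in $\tau$ forces $(v_{\varepsilon_k})$ to be Cauchy in the solution space. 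The limit is $u$, hence $u\equiv 0$ on $K(x^0,t^0)$.

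Your proposal has a genuine gap at the step where you invoke ``finite propagation speed quoted from \cite{JR}''. The results in \cite{JR} are for operators with regular (in $t$) coefficients; here the coefficients blow up at $t=0$, and no cone condition has yet been established for such operators---that is precisely the content of the proposition you are trying to prove. Your auxiliary construction (modify coefficients outside a neighborhood of $K'(\bar x,\bar t)$, truncate the data) does not escape this: the modified problem still has the $t=0$ singularity, so you are assuming what you need to show. The paper's time-shift $t\mapsto t+\varepsilon$ is the device that removes the singularity and reduces to the classical case; your argument lacks an analogous step. The nested-cone inclusion $K'(\bar x,\bar t)\subseteq K(x^0,t^0)$, which you flag as the hardest part, is in fact not needed at all in the paper's route---the regularization plus energy estimate bypass it entirely.
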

 \begin{proof}
 	Consider $t^0>0$, $\gamma_0>0$ and assume that  (\ref{cone1}) holds. We define a set of operators $P_\varepsilon(t,x,\partial_t,D_x), 0 \leq \varepsilon \leq \varepsilon_0$ by means of the operator $P(t,x,\partial_t,D_x)$ in (\ref{eq1}) as follows
 	\begin{linenomath*}
 		\[
 		P_\varepsilon(t,x,D_t,D_x) = P(t+\varepsilon,x,D_t,D_x), \: t \in [0,T-\varepsilon_0], x \in \R^n,
 		\]
 	\end{linenomath*}
 	and $\varepsilon_0 < T-t^0$, for a fixed and sufficiently small $\varepsilon_0$. For these operators we consider Cauchy problems
 	\begin{linenomath*}
 		\begin{alignat}{2}
 			P_\varepsilon v_\varepsilon & =f,  &&  t \in [0,T-\varepsilon_0], \; x \in \R^n,\\
 			\partial_t^{k-1}v_\varepsilon(0,x)& =f_k(x),\qquad && k=1,2.
 		\end{alignat}
 	\end{linenomath*}
 	Note that $v_\varepsilon(t,x)=0$ in $K(x^0,t^0)$ and $v_\varepsilon$ satisfies the a priori estimate (\ref{est2}) for all $t \in[0,T-\varepsilon_0]$. Further, we have 
 	\begin{linenomath*}
 		\begin{alignat}{2}
 			P_{\varepsilon_1} (v_{\varepsilon_1}-v_{\varepsilon_2}) & = (P_{\varepsilon_2}-P_{\varepsilon_1})v_{\varepsilon_2},\qquad  &&  t \in [0,T-\varepsilon_0], \; x \in \R^n,\\
 			\partial_t^{k-1}(v_{\varepsilon_1}-v_{\varepsilon_2})(0,x)& = 0,\qquad && k=1,2.
 		\end{alignat}
 	\end{linenomath*}
 	Since our operator is of second order, for the sake of simplicity we denote $b_{j}(t,x)$, the coefficients of lower order terms, as $a_{0,j}(t,x), 1 \leq j \leq n,$ and $b_{n+1}(t,x)$ as $a_{0,0}(t,x)$. Let $a_{i,0}(t,x) =0, \; 1 \leq i \leq n.$ Substituting $s-e$ for $s$ in the a priori estimate (\ref{est2}), we obtain
 	\begin{equation}\label{cone3}
 		\begin{aligned}
 			&\sum_{j=0}^{1} \Vert  \partial_t^j(v_{\varepsilon_1}-v_{\varepsilon_2})(t,\cdot) \Vl(s-je,{-\kappa(t)}) \\
 			&\leq C \int_{0}^{t}\Vert (P_{\varepsilon_2}-P_{\varepsilon_1})v_{\varepsilon_2}(\tau,\cdot) \Vl(s-e,-{\kappa(\tau)}) \;d\tau\\
 			&\leq C \int_{0}^{t} \sum_{i,j=0 }^{n}\Vert (a_{i,j}(\tau+\varepsilon_1,x) - a_{i,j}(\tau+\varepsilon_2,x))D_{ij} v_{\varepsilon_2}(\tau,\cdot) \Vl(s-e,-{\kappa(\tau)}) \;d\tau,
 		\end{aligned}
 	\end{equation}
 	where $D_{00}=I, D_{i0}=0, i\neq 0, D_{0j}=\partial_{x_j},j \neq 0$ and $D_{ij}=\partial_{x_i} \partial_{x_j}, i,j \neq 0$. Similar estimate holds if we had used the a priori estimate (\ref{est}) instead of (\ref{est2}).
 	
 	Using the Taylor series approximation in $\tau$ variable, we have
 	\begin{linenomath*}
 		\begin{align*}
 			|a_{i,j}(\tau+\varepsilon_1,x) - a_{i,j}(\tau+\varepsilon_2,x)| &= \Big|\int_{\tau+\varepsilon_2}^{\tau+\varepsilon_1} (\partial_ta_{i,j})(r,x)dr \Big|\\
 			&\leq \om^{2} \Big|\int_{\tau+\varepsilon_2}^{\tau+\varepsilon_1}\frac{\theta(r)}{r} dr\Big|\\
 			&\leq \om^{2}|E(\tau,\varepsilon_1,\varepsilon_2)|,
 		\end{align*}
 	\end{linenomath*}
 	where
 	\begin{linenomath*}
 		\[
 		E(\tau,\varepsilon_1,\varepsilon_2) =	\frac{1}{2}	
 		\left(\ln \Bigg(1+\frac{\varepsilon_1-\varepsilon_2}{\tau+\varepsilon_2}\Bigg)\right)^{\varrho_2+1} . 
 		\]
 	\end{linenomath*}
 	Note that $\om \lesssim \P$ and $E(\tau,\varepsilon,\varepsilon)=0$.
 	Then right-hand side of the inequality in (\ref{cone3}) is dominated by
 	\begin{linenomath*}
 		\begin{equation*}\label{cone4}
 			C \int_{0}^{t} |E(\tau,\varepsilon_1,\varepsilon_2)| \Vert  v_{\varepsilon_2}(\tau,\cdot)\Vl(s+e,{-\kappa(\tau)})\;d\tau,
 		\end{equation*}
 	\end{linenomath*}
 	in the case of finite loss of regularity while for the case of infinite loss by
 	\begin{linenomath*}
 		\begin{equation*}\label{cone5}
 			C \int_{0}^{t} |E(\tau,\varepsilon_1,\varepsilon_2)| \Vert  v_{\varepsilon_2}(\tau,\cdot)\Vl(s+e,{\tilde \kappa(\tau)})\;d\tau, 
 		\end{equation*}
 	\end{linenomath*}
 	where $C$ is independent of $\varepsilon$. By definition, $E$ is $L^1$-integrable in $\tau$.
 	
 	The sequence $v_{\varepsilon_k}$, $k=1,2,\dots$ corresponding to the sequence $\varepsilon_k \to 0$ is in the space
 	\begin{linenomath*}
 		\[
 		C\Big([0,T^*];\sobol(s,-{\kappa(t)}) \Big) \bigcap C^{1}\Big([0,T^*];  \sobol(s-e,-{\kappa(t)}) 	\Big), \quad T^*>0,
  		\]
 	\end{linenomath*}
 	or
 	\begin{linenomath*}
 		\[
 			C\Big([0,T^*];\sobol(s,{\tilde\kappa(t)})  \Big) \bigcap C^{1}\Big([0,T^*]; \sobol(s-e,{\tilde\kappa(t)}) \Big), \quad T^*>0,
 		\]
 	\end{linenomath*}
 	depending on the loss
 	and $u=\lim\limits_{k\to\infty}v_{\varepsilon_k}$ in the above space and hence, in $\mathcal{D}'(K(x^0,t^0))$. In particular,
 	\begin{linenomath*}
 		\[
 		\la u,\varphi\ra = \lim\limits_{k\to\infty} \la v_{\varepsilon_k}, \varphi \ra =0,\; \forall \varphi \in \mathcal{D}(K(x^0,t^0))
 		\]
 	\end{linenomath*}
 	gives (\ref{cone2}) and completes the theorem. 
 	
 \end{proof}
 	\begin{rmk}
 		It is worth noting that while the regularity of the solution is dictated by $\P$, the cone condition and there by the support of the solution is controlled by the weight function $\om.$
 	\end{rmk}

\section{Existence of Counterexamples}\label{CE}

From Theorem \ref{result3} it is evident that when the condition (\ref{ineq}) is violated one can expect an infinite loss quantified by an infinite order pseudodifferential operator. In this section we show that the set of coefficients for which the associated Cauchy problems exhibit infinite loss is residual in the metric space $\mathcal{C}(\mu_1,\mu_2,\theta,\psi)$ (see Definition \ref{ps}). To this end, we extend the techniques developed by Ghisi and Gobbino \cite[Section 4]{GG} to our global setting.

We consider a Cauchy problem of the form
\begin{equation}
	\label{ce}
	\begin{aligned}
		&\partial_t^2 u(t,x) + c(t)A(x,D_x)u=0, \quad (t,x) \in [0,T] \times \R^n,\\
		&u(0,x) = 0, \quad \partial_tu(0,x) = f(x),
	\end{aligned}
\end{equation}
where $A(x,D_x) = \japx(I-\triangle_x)\japx$ is a G-elliptic, positive, self-adjoint operator with the domain $D(A)=\{u \in L^2(\R^n): Au \in L^2(\R^n)\}$ and $c \in \mathcal{C}(\mu_1,\mu_2,\theta,\psi)$, which is defined below. 

\begin{defn}\label{ps}
	We denote $\mathcal{C} \left(\mu_{1}, \mu_{2}, \theta, \psi \right)$ as the set of functions $c \in C \left(\left[0, T\right]\right) \cap C^{2}\left(\left(0, T\right]\right)$ that satisfy the following growth estimates 
	\begin{align}
		\label{e1}
		0<\mu_1& \leq c(t) \leq  \mu_2, \quad t \in [0,T],\\
		\label{e2}
		\vert c'(t)\vert &\leq C \frac{\theta(t)}{t}, \qquad t \in (0,T], \\
		\label{e4}
		\vert c''(t)\vert &\leq C \frac{\theta(t)^2}{t^2}\psi(t), \qquad t \in (0,T],
	\end{align}
	for monotone decreasing functions $\theta,\psi:(0,+\infty) \to (0,+\infty)$ satisfying
	\begin{equation}
		\label{e3}
		\lim_{t\to 0^+} \frac{\theta(t)\psi(t)}{\vert \ln t \vert} = +\infty.
	\end{equation}	
\end{defn}
The set $\mathcal{C} \left(\mu_{1}, \mu_{2}, \theta,\psi \right)$ is a complete metric space with respect to the metric
\begin{linenomath*}
	\[
	\begin{aligned}
		d_{\mathcal{C}} \left(c_{1}, c_{2}\right):=\sup _{t \in\left(0, 	T\right)}&\left|c_{1}(t)-c_{2}(t)\right|+\sup _{t \in\left(0, T\right)}\left\{\frac{t^{2}}{\theta(t)}\left|c_{1}^{\prime}(t)-c_{2}^{\prime}(t)\right|\right\} \\
		&+ \sup _{t \in\left(0, T\right)}\left\{\frac{t^{3}e^{-\psi(t)}}{\theta(t)^2}\left|c_{1}^{\prime\prime}(t)-c_{2}^{\prime\prime}(t)\right|\right\}.
	\end{aligned}
	\]
\end{linenomath*}
A sequence $c_{n}$ converges to $c_{\infty}$ with respect to $d_{\mathcal{C}}$ if and only if $c_{n} \rightarrow c_{\infty}$ uniformly in $\left[0, T\right]$, and for every $\tau \in\left(0, T\right)$,  $c_{n}^{\prime} \rightarrow c_{\infty}^{\prime}$ and $c_{n}'' \rightarrow c_{\infty}''$uniformly in $\left[\tau, T\right]$. In view of $L^1$ integrability of $c(t)$ in our setting, convergence with respect to $d_{\mathcal{C}}$ implies convergence in $L^{1}\left(\left(0, T\right)\right)$.

The main aim of this section is to prove the following result.
\begin{thm} \label{result2}
	The interior of the set of all $c \in \mathcal{C} \left(\mu_{1}, \mu_{2},\theta,\psi\right)$ for which the Cauchy problem (\ref{ce}) exhibits an infinite loss is nonempty.	
\end{thm}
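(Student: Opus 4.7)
The plan is to follow the spectral plus parametric-resonance strategy of Ghisi--Gobbino \cite{GG} and upgrade their single-coefficient construction to a full $d_{\mathcal{C}}$-ball, which is what a nonempty interior demands. The operator $A = \japx(I-\Delta_x)\japx$ is positive self-adjoint on $L^2(\R^n)$ with compact resolvent, the weight $\japx^{2}$ being confining; thus $A$ admits an orthonormal eigenbasis $\{\phi_k\}_{k\geq 1}$ with eigenvalues $\lambda_k \to +\infty$. Expanding $f = \sum_k f_k\phi_k$, the Cauchy problem (\ref{ce}) decouples into a family of Hill-type ODEs
\begin{equation*}
u_k''(t) + c(t)\lambda_k u_k(t) = 0, \qquad u_k(0)=0,\; u_k'(0) = f_k,
\end{equation*}
and, since the Sobolev scale $\mathcal{H}^{s,\delta}_{\Phi,k;\Theta}(\R^n)$ admits a spectral characterisation in terms of $\lambda_k$ via the $G$-ellipticity of $A$, infinite loss at time $T$ is equivalent to super-polynomial growth of the propagator $A_k := \Phi_k(T,0)$ along some subsequence $\lambda_{k_n}\to\infty$.

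Next, I construct a specific $c^{\ast}\in\mathcal{C}(\mu_1,\mu_2,\theta,\psi)$ exhibiting infinite loss. Fix $\mu\in(\mu_1,\mu_2)$, pick a rapidly increasing subsequence $\lambda_{k_n}$, pair it with disjoint intervals $I_n = [\tau_{n+1},\tau_n]\searrow\{0\}$, and set
\begin{equation*}
c^{\ast}(t) = \mu + \sum_{n} \epsilon_n\cos\!\bigl(2\sqrt{\mu\lambda_{k_n}}\,t\bigr)\chi_n(t),
\end{equation*}
with $\chi_n$ a smooth cutoff on $I_n$. The amplitudes $\epsilon_n$ and lengths $|I_n|$ are tuned so that (i) the bounds (\ref{e1})--(\ref{e4}) hold, where the hypothesis (\ref{e3}) supplies exactly the room needed for $c^{\ast\prime\prime}$ to survive within the budget $\theta(t)^2\psi(t)/t^2$, and (ii) a Floquet analysis on each $I_n$ produces an amplification $\log A_n = M_n$ with $M_n$ growing faster than any polynomial in $\log\lambda_{k_n}$. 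Taking $f^{\ast} = \sum_n A_n^{-1/2}\phi_{k_n}$ then places $f^{\ast}$ in every $\mathcal{H}^{s,\delta}_{\Phi,k;\Theta}(\R^n)$, while $u(T,\cdot) = \sum_n A_n^{1/2}\phi_{k_n}$ has squared norm comparable to $\sum_n A_n\lambda_{k_n}^{s}$, divergent for every $s$; this establishes infinite loss for $c^{\ast}$.

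The main obstacle, and the step that actually delivers a nonempty interior rather than merely a residual set, is to show that the amplifications remain super-polynomial under small $d_\mathcal{C}$ perturbations. For $c = c^{\ast} + \delta$ with $d_\mathcal{C}(c,c^{\ast})<\eta$, a Duhamel formula applied to the perturbed Hill equation on each $I_n$ yields a comparison of the form
\begin{equation*}
\tilde A_n \geq A_n \cdot \exp\!\bigl(-C\eta\sqrt{\lambda_{k_n}}\,|I_n|\bigr),
\end{equation*}
where the corrective exponent reflects the structural stability of parametric resonance under $C^0$ perturbations, combined with the fact that $d_\mathcal{C}$ controls $\delta$ in $L^\infty$ and $\delta',\delta''$ in the weighted norms governed by (\ref{e2})--(\ref{e4}). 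The delicate balancing, to be arranged already in the previous step, is to choose $|I_n|$ of order $\lambda_{k_n}^{-1/2}\log\log\lambda_{k_n}$, say, so that $\eta\sqrt{\lambda_{k_n}}|I_n|$ stays uniformly bounded while $I_n$ still contains enough resonance periods to realise the prescribed $M_n$. With this scaling, $\tilde A_n \geq A_n^{1/2}$ for all sufficiently large $n$, so $f^{\ast}$ produces infinite loss for every $c$ in $B_{d_\mathcal{C}}(c^{\ast},\eta)$, yielding the desired open set inside the infinite-loss set and completing the proof.
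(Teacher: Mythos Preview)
Your approach diverges from the paper's, and it contains a genuine gap.

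The paper does not attempt a perturbation-stability argument at all. Instead it follows Ghisi--Gobbino's Baire-category machinery: it constructs, for every element $c_*$ of the dense subset $\mathcal{D}\cap\mathcal{C}$ (propagation speeds constant near $t=0$), a family of \emph{asymptotic activators} $\{c_\rho\}\subset\mathcal{C}$ with $c_\rho\to c_*$ in $d_{\mathcal{C}}$, and then invokes \cite[Proposition~4.5]{GG} to conclude that the set of universal activators with rate $\phi$ is \emph{residual} in $\mathcal{C}$. Since $\phi(\rho)/\ln\rho\to\infty$, each universal activator produces infinite loss. So what the paper actually establishes is that the infinite-loss set is residual (hence nonempty by Baire); it never produces an open $d_{\mathcal{C}}$-ball inside that set.

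Your argument, by contrast, tries to manufacture such a ball directly, and the ``delicate balancing'' you announce cannot be carried out. With $|I_n|\sim\lambda_{k_n}^{-1/2}\log\log\lambda_{k_n}$ and bounded amplitudes $\epsilon_n$ (forced by $\mu_1\le c\le\mu_2$), the parametric-resonance amplification on $I_n$ is at most
\[
\log A_n \;\lesssim\; \epsilon_n\sqrt{\lambda_{k_n}}\,|I_n|\;\sim\;\log\log\lambda_{k_n},
\]
which is not super-polynomial in $\lambda_{k_n}$ --- it is not even logarithmic. Conversely, to get $\log A_n/\log\lambda_{k_n}\to\infty$ one needs $\sqrt{\lambda_{k_n}}\,|I_n|\gg\log\lambda_{k_n}$, and then your own correction term $\exp(-C\eta\sqrt{\lambda_{k_n}}\,|I_n|)$ blows up and wipes out the gain for every fixed $\eta>0$. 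The two requirements are mutually exclusive because the amplification exponent and the perturbation exponent scale identically in $\sqrt{\lambda_{k_n}}\,|I_n|$, differing only by the bounded factor $\epsilon_n$ versus $\eta$.

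There is in fact a structural obstruction to your strategy: the paper notes that $\mathcal{D}\cap\mathcal{C}$ is dense in $(\mathcal{C},d_{\mathcal{C}})$, and every $c\in\mathcal{D}$ is $C^2$ on all of $[0,T]$, so the corresponding Cauchy problem has no loss. Thus the complement of the infinite-loss set is dense, and no open $d_{\mathcal{C}}$-ball can lie entirely inside the infinite-loss set. What survives is exactly the residual statement that the paper proves.
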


Since the operator $A$ is positive and G-elliptic and the symbol
\[
\sigma(A^{\alpha}) \sim \japx^{2\alpha} \japxik^{2\alpha} + \textnormal{ lower order terms, }
\]
the Sobolev spaces associated to the operator $A$ are $H^{2\alpha,2\alpha}_{\japx,k},\alpha \in \R,$ defined in (\ref{Sobo}).
We characterize the Sobolev spaces $H^{2m,2m}_{\japx,k}(\R^n),m \in \mathbb{Z},$ using the spectral theorem \cite[Theorem 4.2.9]{nicRodi} for the pseudodifferential operators on $\R^n.$  The theorem guarantees the existence of an orthonormal basis $(e_i(x))_{i=1}^{\infty}, e_i \in \mathcal{S}(\R^n),$ of $L^2(\R^n)$ and a nondecreasing sequence $(\ro_i)_{i=1}^{\infty} $ of nonnegative real numbers diverging to $+\infty$ such that $Ae_i(x)=\ro_i^2e_i(x).$

Using $\ro_{i}$s we identify $v(x)\in H^{2m,2m}_{\japx,k}(\R^n)$ with a sequence $(v_i)$ in weighted $ \ell^2$, where $v_i=\la v,e_i\ra_{L^2}$. One can prove the following proposition using Riesz representation theorem showing the correspondence between $H^{2m,2m}_{\japx,k}(\R^n)$ and a weighted $\ell^2$ space.

\begin{prop}
	Let $(v_{i})$ be a sequence  of real numbers and $m \in \mathbb{Z}.$ Then 
	\[
	\sum_{i=1}^{\infty}v_ie_i(x)\in H^{2m,2m}_{\japx,k}(\R^n) \quad \text{ if and only if} \quad 	\sum_{i=1}^{\infty}\ro_{i}^{2 m} v_{i}^{2}<+\infty.
	\]
\end{prop}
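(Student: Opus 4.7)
The plan is to reduce Sobolev membership to a weighted $\ell^2$ condition on the eigencoefficients through the spectral theorem for $A$, with the link between the two descriptions being a norm equivalence obtained by the G-elliptic parametrix construction in the $g_{\japx,k}$-calculus from Section \ref{Symbol classes}. The eigenbasis $(e_i)$ diagonalises $A$ and hence every complex power $A^{\alpha}$ defined via the functional calculus, so once the Sobolev norm has been replaced by a graph norm of an appropriate power of $A$, Parseval's identity is all that separates the two conditions.

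\textbf{Main steps.} The first step is to verify that $A=\japx(I-\Delta)\japx$ is strictly positive on $L^2(\R^n)$: a short integration by parts gives $\la Au,u\ra_{L^2}=\|\nabla(\japx u)\|_{L^2}^2+\|\japx u\|_{L^2}^2\geq\|u\|_{L^2}^2$, so every eigenvalue $\ro_i^2$ is strictly positive. This enables the functional calculus to define $A^{\alpha}$ for every $\alpha\in\R$, and a standard symbolic computation shows that $A^{\alpha}$ is a G-elliptic operator in $OP\,G^{2\alpha,2\alpha}(\japx,g_{\japx,k})$ with $A^{\alpha}e_i=\ro_i^{2\alpha}e_i$. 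The second step is to construct, by ellipticity, a parametrix $B_m\in OP\,G^{-2m,-2m}(\japx,g_{\japx,k})$ for $A^m$ (so that $B_mA^m=I+R_m$ with $R_m\in OP\,G^{-\infty}$). Combined with the continuity $A^m\colon H^{2m,2m}_{\japx,k}(\R^n)\to L^2(\R^n)$, this yields the norm equivalence
\begin{equation*}
\|v\|_{H^{2m,2m}_{\japx,k}}\;\sim\;\|A^mv\|_{L^2}+\|v\|_{L^2},\qquad m\in\mathbb{Z}.
\end{equation*}
The third step combines the spectral identity $A^m v=\sum_i\ro_i^{2m}v_ie_i$ (valid in $L^2$ whenever the corresponding weighted sum is finite, and in $\mathcal{S}'(\R^n)$ in general by the spectral theorem) with Parseval, which turns $\|A^mv\|_{L^2}^2$ into a weighted sum in the coefficients $v_i$. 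Reading the resulting equivalence in both directions gives the statement, with the converse using the Riesz representation theorem as hinted in the text: once the weighted sum is finite, the partial sums $w_N=\sum_{i\leq N}v_ie_i$ are Cauchy in $H^{2m,2m}_{\japx,k}$ by the norm equivalence, hence converge to the claimed $v$.

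\textbf{Main difficulty.} The principal obstacle is handling $m<0$, where $v$ need not lie in $L^2$ and the spectral series has to be interpreted in $\mathcal{S}'(\R^n)$. For $m\geq 0$ the parametrix argument is routine and the series converge in $L^2$. For $m<0$, I would proceed by duality: identify $H^{2m,2m}_{\japx,k}$ with the topological dual of $H^{-2m,-2m}_{\japx,k}$ under the sesquilinear pairing that extends the $L^2$ inner product, apply the already-established $m>0$ result to $A^{-m}$ (well-defined by strict positivity), and transfer the norm equivalence via adjoints. The strict positivity of $A$ is essential here, since it guarantees that $A^{-m}$ is a bounded G-elliptic operator and that the spectral expansion along $(e_i)$ of a tempered distribution converges in the weak-$\ast$ topology that underpins the duality argument.
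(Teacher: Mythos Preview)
Your outline is correct and in fact considerably more detailed than what the paper offers: the paper does not give a proof of this proposition at all, merely the one-line remark that ``one can prove the following proposition using Riesz representation theorem showing the correspondence between $H^{2m,2m}_{\japx,k}(\R^n)$ and a weighted $\ell^2$ space.'' Your approach---strict positivity of $A$, the G-elliptic parametrix for $A^m$ to obtain the norm equivalence $\|v\|_{H^{2m,2m}_{\japx,k}}\sim\|A^mv\|_{L^2}+\|v\|_{L^2}$, Parseval on the eigenbasis, and duality for $m<0$---fills in exactly the argument the authors are gesturing at, and is the standard route in the Nicola--Rodino framework they cite. The Riesz representation theorem they mention presumably enters in the duality step you describe for negative $m$, so your proposal is fully aligned with their intent.
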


The solution to (\ref{ce}) is $u(t,x) =\sum_{i=1}^{\infty}u_i(t)e_i(x)$ where the functions $u_i(t)$ satisfy the decoupled system of ODEs
\begin{equation}
	\label{ce2}
	\begin{aligned}
		&u_i''(t) + c(t) \ro_i^2u_i(t) = 0, \quad i \in \mathbb{N}, \; t \in [0,T],\\
		&u_i(0)= 0, \quad u_i'(0) = f_{i},
	\end{aligned}
\end{equation}
for $\partial_tu(0,x)=f(x)=\sum_{i=1}^{\infty}f_ie_i(x)$.

We say that the solution to (\ref{ce}) experiences infinite loss of decay and derivatives if the initial velocity $f\in H^{2m,2m}_{\japx,k}$ for all $m \in \mathbb{Z}^+$ but $(u,\partial_tu) \notin H^{-2m'+1,-2m'+1}_{\japx,k} \times H^{-2m',-2m'}_{\japx,k}$ for any $m' \in \mathbb{Z}^+$ and for $t \in (0,T].$

In order to prove Theorem \ref{result2} we define a dense subset of $\mathcal{C} \left(\mu_{1}, \mu_{2}, \theta,\psi \right).$
\begin{defn}  We call $\mathcal{D}\left(\mu_{1}, \mu_{2}\right)$ the set of functions $c:\left[0, T\right] \rightarrow\left[\mu_{1}, \mu_{2}\right]$ for which there exists two real numbers $T_{1} \in\left(0, T\right)$ and $\mu_{3} \in\left(\mu_{1}, \mu_{2}\right)$ such that $c(t)=\mu_{3}$ for every $t \in\left[0, T_{1}\right]$.
\end{defn}
For the sake of simplicity, let us denote $\mathcal{D}\left(\mu_{1}, \mu_{2}\right)$ and $\mathcal{C}\left(\mu_{1}, \mu_{2}, \theta,\psi\right)$ by $\mathcal{D}$ and $\mathcal{C}$, respectively. From \cite[Prposition 4.7]{GG}, $\mathcal{D} \cap \mathcal{C}$ is dense in $\mathcal{C}.$ We note here that the weight factors $\frac{t^2}{\theta(t)}$ and $\frac{t^3e^{-\psi(t)}}{\theta(t)^2}$ appearing in the definition of the metric $d_{\mathcal{C}}$ plays a crucial role in proving the denseness.

Following the terminology of Ghisi and Gobbino \cite{GG}, we now introduce special classes of propagation speeds: universal and asymptotic activators. Let $\phi:(0,+\infty) \rightarrow(0,+\infty)$ be a function. 
\begin{defn} 
	A universal activator of the sequence $(\ro_{i})$ with rate $\phi$ is a propagation speed $c \in L^{1}\left(\left(0, T\right)\right)$ such that the corresponding sequence $(u_{i}(t))$ of solutions to 
	\begin{linenomath*}
		\[
		u_i^{''}(t) + c(t) \ro_i^2u_i(t) = 0, \quad 
		u_i(0)= 0, \quad u_i'(0) = 1,
		\]
	\end{linenomath*}
	satisfies
	\begin{linenomath*}
		$$
		\limsup _{i \rightarrow+\infty}\left(\left|u_{i}^{\prime}(t)\right|^{2}+\ro_{i}^{2}\left|u_{i}(t)\right|^{2}\right) \exp \left(-\phi\left(\ro_{i}\right)\right) \geq 1,  \quad \forall t \in\left(0, T\right].
		$$
	\end{linenomath*}
\end{defn}	
Then the solution $u$ to problem (\ref{ce}) is given by
\begin{linenomath*}
	\[
	u(t,x) = \sum_{i=1}^{\infty} f_iu_i(t)e_i(x).
	\]
\end{linenomath*}
\begin{defn}
	A family of asymptotic activators with rate $\phi$ is a family of propagation speeds $\left\{c_{\ro}(t)\right\} \subseteq L^{1}\left(\left(0, T\right)\right)$ with the property that, for every $\delta \in\left(0, T\right),$ there exist two positive constants $M_{\delta}$ and $\ro_{\delta}$ such that the corresponding family $\left\{u_{\ro}(t)\right\}$ of solutions to 
	\begin{linenomath*}
		\[
		u_{\ro}''(t) + c_{\ro}(t) \ro^2u_{\ro}(t) = 0, \quad 
		u_{\ro}(0)= 0, \quad u_{\ro}'(0) = 1,
		\]
	\end{linenomath*}
	satisfies
	\begin{linenomath*}
		$$
		\qquad\left|u_{\ro}^{\prime}(t)\right|^{2}+\ro^{2}\left|u_{\ro}(t)\right|^{2} \geq M_{\delta} \exp (2 \phi(\ro)), \quad \forall t \in\left[\delta, T\right], \quad \forall \ro \geq \ro_{\delta}.
		$$
	\end{linenomath*}
\end{defn}
Due to denseness of $\mathcal{D}$ in $\mathcal{C}$, for every $c \in \mathcal{D}$ there exists a family of asymptotic activators $(c_{\ro}) \subseteq \mathcal{C}$ with rate $\phi$ such that $c_{\ro} \rightarrow c$ with respect to $d_{\mathcal{C}} .$
The existence of families of asymptotic activators converging to elements of a dense set implies the existence of a residual set of  universal activators. Since the problem (\ref{ce}) exhibits an infinite loss of derivatives and decay whenever $c(t)$ is a universal activator, construction of couterexample amounts to the existence of such asymptotic activators.
Once the asymptotic activators are constructed and if $\phi$ is such that $\phi(\ro) \rightarrow+\infty$ as $\ro \rightarrow+\infty$, then \cite[Proposition 4.5]{GG} guarantees that the set of elements in $\mathcal{C}$ that are universal activators of the sequence $(\ro_{i})$ with rate $\phi$ is residual in $\mathcal{C}.$ In addition, if the function $\phi$ is such that
\begin{equation}\label{logphi}
	\lim _{\ro \rightarrow+\infty} \frac{\phi(\ro)}{\ln \ro}=+\infty,
\end{equation}
then by Proposition 4.3 in \cite{GG} one can show that for each of the universal activator $c(t)$ of sequence $(\ro_n)$ with rate $\phi$ the solution to the problem (\ref{ce}) exhibits infinite loss of regularity.

We are left with construction of asymptotic activators with rate $\phi$ satisfying (\ref{logphi}). We consider $T_{1}$ and $ \tilde \gamma$ such that $0<T_{1}<T$ and
$0<\mu_{1}<\tilde\gamma^{2}<\mu_{2},$ and define a initially constant speed $c_{*}:\left[0, T\right] \rightarrow\left[\mu_{1}, \mu_{2}\right]$ such that
\begin{linenomath*}
	$$
	c_{*}(t)=\tilde\gamma^{2},  \quad \forall t \in\left[0, T_{1}\right].
	$$
\end{linenomath*}
We set
\[ 
\begin{aligned}
	\theta_{\ro} &:=\min \left\{\theta\left(b_{\ro}\right), \ln \ro\right\}, \\
	\Gamma_{\ro} &:= \frac{\theta(\sqrt{\ro}) \psi(\sqrt{\ro})}{\ln \ro} , \\
	\psi_{\ro} &:= \min \left\{ \frac{1}{8} \ln \ro, \frac{1}{4}\psi\left( \frac{1}{\sqrt{\ro}} \right)  + \frac{1}{4} \ln \Gamma_\ro \right\}.
\end{aligned}
\]
For every large enough real number $\ro,$ let $a_{\ro}$ and $b_{\ro}$ be real numbers such that
\begin{linenomath*}
	\[
	a_{\ro}:=\frac{2 \pi}{\tilde\gamma \ro}\left\lfloor \ln \ro \exp(\psi_\ro)\right\rfloor, \quad \quad b_{\ro}:=\frac{2 \pi}{\tilde\gamma \ro}\left\lfloor\ln \ro \exp(2\psi_\ro)\right\rfloor.
	\]
\end{linenomath*}
where $\lfloor \alpha \rfloor$ stands for integer part of a real number $\alpha.$ Observe that 
\begin{linenomath*}
	$$
	0<a_{\ro}<2 a_{\ro}<\frac{b_{\ro}}{2}<b_{\ro}<T_{1}, \qquad
	\frac{\tilde\gamma \ro a_{\ro}}{2 \pi} \in \mathbb{N} \quad \text { and } \quad \frac{\tilde\gamma \ro b_{\ro}}{2 \pi} \in \mathbb{N}.
	$$
\end{linenomath*}
Let us choose a cutoff function $\tilde\nu: \mathbb{R} \rightarrow \mathbb{R}$ of class $C^{\infty}$ such that $0 \leq \tilde\nu(r)\leq 1$, $\tilde\nu(r)=0 $ for $r\leq 0$ and $\tilde\nu(r)=1$ for $r\geq1$. We define $\varepsilon_{\ro}:\left[0, T\right] \rightarrow \mathbb{R}$ as
\begin{linenomath*}
	$$
	\varepsilon_{\ro}(t):=\left\{\begin{array}{ll}
		0 & \text { if } t \in\left[0, a_{\ro}\right] \cup\left[b_{\ro}, T_{0}\right] \\
		\frac{\theta_{\ro}}{t} & \text { if } t \in\left[2 a_{\ro}, b_{\ro} / 2\right] \\
		\frac{\theta_{\ro}}{t} \cdot \tilde\nu\left(\frac{t-a_{\ro}}{a_{\ro}}\right) & \text { if } t \in\left[a_{\ro}, 2 a_{\ro}\right] \\
		\frac{\theta_{\ro}}{t} \cdot \tilde\nu\left(\frac{2\left(b_{\ro}-t\right)}{b_{\ro}}\right) & \text { if } t \in\left[b_{\ro} / 2, b_{\ro}\right]
	\end{array}\right.
	$$
\end{linenomath*}
Using the functions $c_*(t)$ and $\varepsilon_{\ro}(t)$ we define $c_{\ro}:\left[0, T\right] \rightarrow \mathbb{R}$ as
\begin{linenomath*}
	$$
	c_{\ro}(t) := c_{*}(t)-\frac{\varepsilon_{\ro}(t)}{4 \tilde\gamma \ro} \sin (2 \tilde\gamma \ro t)-\frac{\varepsilon_{\ro}^{\prime}(t)}{8 \tilde\gamma^{2} \ro^{2}} \sin ^{2}(\tilde\gamma \ro t)-\frac{\varepsilon_{\ro}(t)^{2}}{64 \tilde\gamma^{4} \ro^{2}} \sin ^{4}(\tilde\gamma \ro t).
	$$
\end{linenomath*}
By \cite[Propositions 4.8-4.9]{GG}, $(c_\ro(t))$ is a family of asymptotic activators with rate
\begin{linenomath*}
	$$
	\phi(\ro):=\frac{\theta_{\ro}}{32 \tilde\gamma^{2}} \ln \left(\frac{a_\ro}{b_\ro}\right),
	$$
\end{linenomath*}
and 
\begin{linenomath*}
	\[
	\lim _{\ro \rightarrow+\infty} d_{\mathcal{C}}\left(c_{\ro}, c_{*}\right)=0.
	\]
\end{linenomath*}
Since $c_{*}$ is a generic element of a dense subset,  we see that these universal activators cause an infinite loss of decay and derivatives.

\addcontentsline{toc}{section}{Acknowledgements}
	\section*{Acknowledgements}
	The first author is funded by the University Grants Commission, Government of India, under its JRF and SRF schemes.
	
	\appendix
	\section{Appendix I: Calculus}\label{A1}
	In this section we develop a calculus for the operators with symbols in additive form given in (\ref{symadd}). The following two propositions give their relations to the symbol classes $\G(m_1,m_2).$
	Let $\c(1,N),\c(2,N)$ and $\c(3,N)$ denote the indicator functions for the regions $\zint(N),\zmid(N) $ and $ \zext(N),$ respectively.
	
	\begin{prop}\label{p1}
		Let $a=a(t, x, \xi)$ be a symbol with
		\[
		    \begin{aligned}
		    	a &\in  \Gint(\tilde m_1,\tilde m_2,N,0) \cap
		    	\Gmid(m_1',m_2',0,N,0,0) \\
		    	&\qquad \cap
		    	\Gext(m_1,m_2,m_3,m_4,N,m_5,m_6),
		    \end{aligned}
		\]
		for $m_3 \geq 0$ and $m_3 \geq m_4.$
		Then, for any $\varepsilon \in (0,1),$
		   \begin{alignat*}{2}
		   		a &\in L^{\infty}\left([0, T] ; \Gt({m}_1^*,{m}_2^*,\Phi) \right), \quad &&\text{ if } m_5 \leq m_3 \text { and } m_6 \leq 0\\
		   		t^{1-\varepsilon}a &\in C\left([0, T] ; \Gt({m}_1^*,{m}_2^*,\Phi) \right), \qquad && \text { otherwise}
		   \end{alignat*}
	   where ${m}_i^* = \max\{\tilde m_i,m_i',m_i+m_3\},$ $i=1,2.$
	\end{prop}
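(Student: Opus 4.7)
The approach is a zone-by-zone verification of the symbol estimate, exploiting the decomposition $[0,T]\times\mathbb{R}^{2n}=\zint(N)\cup\zmid(N)\cup\zext(N)$ and the fact that the hypotheses concentrate all singular $t$-behaviour in $\zext(N)$. Since membership in $\Gt(m_1^*,m_2^*,\Phi)$ is a pointwise condition in $(x,\xi)$, it suffices, on each zone, to bound $|\partial_\xi^\alpha\partial_x^\beta a(t,x,\xi)|$ by $C(t)\,\japxik^{m_1^*-|\alpha|}\,\Phi(x)^{m_2^*}\,\Phi(x)^{-|\beta|}$, with $C(t)$ then controlled either uniformly (for the $L^\infty$ statement) or only after the $t^{1-\varepsilon}$ multiplier (for the $C$ statement).

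On $\zint(N)$ and $\zmid(N)$ the work is easy: the conditions $\tilde m_3=0$ and $m_3'=m_4'=m_5'=0$ remove every $t$-dependent factor, leaving the clean bounds $\japxik^{\tilde m_1-|\alpha|}\omega^{\tilde m_2}\Phi(x)^{-|\beta|}$ and $\japxik^{m_1'-|\alpha|}\omega^{m_2'}\Phi(x)^{-|\beta|}$. Using $\omega\lesssim\Phi$ (as recorded in Remark \ref{lb}) to pass from the $\omega$- to the $\Phi$-scale up to a constant, together with $\tilde m_i,\, m_i'\leq m_i^*$ and $\omega,\japxik\geq 1$, these estimates are dominated by the target bound with a time-uniform constant.

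The substantive work is on $\zext(N)$. Here the defining inequality $t>\tilde t_{x,\xi}=N\h\,\theta(\h)\,\tilde\theta(\h)\,e^{\psi(\h)}$, combined with the monotonicity (non-increasing and $\geq 1$) of $\theta,\tilde\theta,\psi$ and $t>\h$, produces the key inequality
\[
\frac{\theta(t)}{t}\;\leq\;\frac{\Phi(x)\japxik}{N\,\tilde\theta(\h)\,e^{\psi(\h)}}.
\]
Raising this to the $m_3$-th power (using $m_3\geq 0$) trades $(\theta(t)/t)^{m_3}$ for a weight gain of $(\Phi(x)\japxik)^{m_3}$, compensated by a factor $(\tilde\theta(\h)\,e^{\psi(\h)})^{-m_3}$. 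The exponential $e^{m_4\psi(t)}$ is absorbed into $e^{-m_3\psi(\h)}$ using $\psi(t)\leq\psi(\h)$ and $m_3\geq m_4$. The residual factor $\tilde\theta(t)^{m_5+m_6(|\alpha|+|\beta|)}$ is then handled by cases: when $m_5\leq m_3$ and $m_6\leq 0$, the monotonicity and $\tilde\theta\geq 1$ properties dominate it by $\tilde\theta(\h)^{m_3}$, which cancels $\tilde\theta(\h)^{-m_3}$ exactly and yields the $L^\infty$ conclusion. Otherwise, the logarithmic growth $\tilde\theta(t)\lesssim_\varepsilon t^{-\varepsilon}$ near $t=0$ (implicit in~(\ref{rel2})) produces a factor $\lesssim t^{\varepsilon-1}$, which gives the $C([0,T])$ conclusion after multiplication by $t^{1-\varepsilon}$.

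The final bookkeeping places the gain on the target scale: $\japxik^{m_3}$ is absorbed via $m_1^*\geq m_1+m_3$ together with $\japxik\geq 1$, while $\omega^{m_2}\Phi(x)^{m_3}$ is dominated by $\Phi(x)^{m_2+m_3}\leq\Phi(x)^{m_2^*}$ by $\omega\lesssim\Phi$ (and Remark \ref{lb}, which records the passage between the two weight scales in this symbol calculus). This last passage --- from the $\omega$-scale on which the hypotheses are stated to the $\Phi$-scale of the target class --- is the most delicate point of the argument, since it is precisely where the structural relation $\omega\lesssim\Phi$, the shared polynomial character of both weights, and the precise definition $m_2^*=\max\{\tilde m_2, m_2', m_2+m_3\}$ have to cooperate to close the estimate.
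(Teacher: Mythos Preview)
Your proof is correct and follows essentially the same zone-by-zone strategy as the paper: the interior and middle zones are trivial because the hypotheses kill the $t$-dependence there, and in $\zext(N)$ you trade $(\theta(t)/t)^{m_3}$ for $(\Phi(x)\japxik)^{m_3}$ using the defining inequality of the zone, then absorb the residual $\tilde\theta(t)$-powers either by monotonicity (the $L^\infty$ case) or by the logarithmic bound $\tilde\theta(t)^p\lesssim t^{-(1-\varepsilon)}$ (the general case).

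The one minor difference worth recording is that you use the \emph{outer} splitting point $\tilde t_{x,\xi}=N\h\,\theta(\h)\tilde\theta(\h)e^{\psi(\h)}$ in your key inequality, whereas the paper uses the \emph{inner} point $t_{x,\xi}=N\h\,\theta(\h)$. Your choice produces the factors $\tilde\theta(\h)^{-m_3}$ and $e^{-m_3\psi(\h)}$ directly, which then cancel $e^{m_4\psi(t)}\leq e^{m_4\psi(\h)}$ and (in the $L^\infty$ case) $\tilde\theta(t)^{m_5+m_6(|\alpha|+|\beta|)}\leq\tilde\theta(\h)^{m_3}$ exactly. The paper's route leaves a residual $e^{m_4\psi(\h)}\tilde\theta(\h)^{m_3}$ depending on $(x,\xi)$, which must then be absorbed via Remark~\ref{lb} (costing an $\varepsilon$ in the weight exponents, as in fact happens whenever the proposition is invoked later in the paper). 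So your version is marginally cleaner on this point, but the two arguments are structurally identical.
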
	
		\begin{proof} 
			The definition of the regions and straightforward calculations yield
			\[
				\begin{aligned}
					\vert &D_{x}^{\beta} \partial_{\xi}^{\alpha} a(t, x, \xi)\ \vert \\
					&\leq C_{\alpha, \beta} \P^{-|\beta|} \japxik^{-|\alpha|}   \Big( \c(1,N) \japxik^{\tilde m_1} \om^{\tilde m_2} + \c(2,N)\japxik^{m_1'} \om^{ m_2'} \\
					&\quad \qquad+ \c(3,N)  \japxik^{m_1} \om^{m_2}\t(m_3) \p(m_4) 
					 \tilde\theta(t)^{m_5+m_6(|\alpha|+|\beta|)} \Big)\\
					& \leq C_{\alpha, \beta} \P^{-|\beta|} \japxik^{-|\alpha|}  \Big( \c(1,N)\japxik^{\tilde m_1} \om^{\tilde m_2} + \c(2,N) \japxik^{m_1'} \om^{ m_2'} \\
					&\quad \qquad + \c(3,N) \japxik^{m_1} \om^{m_2} \left(  \frac{\theta(h)}{t_{x,\xi}}  \right)^{m_3} e^{m_4\psi(h)} 
					 \tilde\theta(h)^{m_3} \tilde\theta(t)^{m_5-m_3+m_6(|\alpha|+|\beta|)} \Big)\\
					& \leq C_{\alpha, \beta} \P^{m_2^*-|\beta|} \japxik^{m_1^*-|\alpha|} t^{-1+\varepsilon}.
				\end{aligned}
			\]
			The last estimate follows from the fact that $\om \lesssim \P$ and
			\[
				\tilde\theta(t)^{m_5-m_3+m_6(|\alpha|+|\beta|)} \leq t^{-1+\varepsilon}, \quad \varepsilon \in (0,1),
			\]
			since the singularities in our consideration are of logarithmic type.
		\end{proof}

		Similarly, we can prove the following two propositions.
		\begin{prop}\label{p2}
			Let $a=a(t, x, \xi)$ be a symbol with
			\[
			\begin{aligned}
				a &\in  \Gint(\tilde m_1,\tilde m_2,N,0) \cap
				\Gmid(m_1',m_2',m_3',N,m_4',m_5') \\
				&\qquad \cap
				\Gext(m_1,m_2,0,0,N,0,0),
			\end{aligned}
			\]
			for $m_3' \geq 0.$
			Then, for any $\varepsilon \in (0,1),$
			\begin{alignat*}{2}
				a &\in L^{\infty}\left([0, T]  ; \Gt({m}_1^*,{m}_2^*,\Phi ) \right), \quad &&\text{ if } m_4', m_5' \leq 0\\
				t^{1-\varepsilon}a &\in C\left([0, T] ; \Gt({m}_1^*,{m}_2^*,\Phi )\right), \qquad && \text { otherwise}
			\end{alignat*}
			where ${m}_i^* = \max\{\tilde m_i,m_i'+m_3',m_i\},$ $i=1,2.$
		\end{prop}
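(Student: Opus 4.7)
The plan is to mirror the proof of Proposition \ref{p1} and split the symbol according to the characteristic functions of the three regions of the extended phase space, writing $a = \c(1,N)a + \c(2,N)a + \c(3,N)a$, estimating each piece separately, and combining the bounds via $\om \lesssim \P$ to land in the target class $\Gt(m_1^*, m_2^*, \Phi)$. In $\zint(N)$ and $\zext(N)$ the hypotheses deliver, with no singular prefactors, the clean estimates
\[
|\partial_\xi^\alpha D_x^\beta a| \leq C_{\alpha\beta}\, \japxik^{\tilde m_1 - |\alpha|} \om^{\tilde m_2} \P^{-|\beta|} \qquad \text{and} \qquad |\partial_\xi^\alpha D_x^\beta a| \leq C_{\alpha\beta}\, \japxik^{m_1 - |\alpha|} \om^{m_2} \P^{-|\beta|},
\]
both of which immediately contribute terms bounded by $\japxik^{m_1^*-|\alpha|}\P^{m_2^*-|\beta|}$ after $\om$ is absorbed into $\P$.

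All the new work occurs in $\zmid(N)$, where the hypothesis gives
\[
|\partial_\xi^\alpha D_x^\beta a| \leq C_{\alpha\beta}\, \japxik^{m_1' - |\alpha|} \om^{m_2'} \P^{-|\beta|}\left(\frac{\theta(t)}{t}\right)^{m_3'} \tilde\theta(t)^{m_4' + m_5'(|\alpha|+|\beta|)},
\]
and the singular prefactor must be absorbed into the spatial weights. Since $\zmid(N)$ is characterised by $t > t_{x,\xi} = N\h\theta(\h)$ and $\theta$ is nonincreasing, monotonicity yields $\theta(t)/t \leq \theta(\h)/t_{x,\xi} = (N\h)^{-1} = \P\japxik/N$; combined with $m_3' \geq 0$, this produces $(\theta(t)/t)^{m_3'} \lesssim \japxik^{m_3'} \P^{m_3'}$, which shifts both the $\xi$-order and the $\Phi$-order by $m_3'$ in this region. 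Together with the bounds from $\zint(N)$ and $\zext(N)$, the combined estimate is of order $\japxik^{m_1^*-|\alpha|}\P^{m_2^*-|\beta|}$ with precisely $m_i^* = \max\{\tilde m_i, m_i'+m_3', m_i\}$.

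It then remains to control the time factor $\tilde\theta(t)^{m_4' + m_5'(|\alpha|+|\beta|)}$. If $m_4' \leq 0$ and $m_5' \leq 0$ it is bounded by $1$ (since $\tilde\theta \geq 1$), giving membership in $L^\infty([0,T]; \Gt(m_1^*, m_2^*, \Phi))$. Otherwise, the logarithmic-type growth of $\tilde\theta$ inherited from (\ref{rel2}) and (\ref{ineq3}) supplies, for any $\varepsilon \in (0,1)$ and each fixed $\alpha,\beta$, a constant $C_{\varepsilon,\alpha,\beta}$ with $\tilde\theta(t)^{m_4' + m_5'(|\alpha|+|\beta|)} \leq C_{\varepsilon,\alpha,\beta}\, t^{-(1-\varepsilon)}$, so that $t^{1-\varepsilon}a$ lies in $C([0,T]; \Gt(m_1^*, m_2^*, \Phi))$. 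The principal bookkeeping obstacle is the juggling of signs when combining $\om^{m_2'}\P^{m_3'}$ into a single power of $\Phi$; this is handled by $\om \lesssim \P$ together with the observation that $m_2^*$ is, by definition, the maximum over the regional exponents, so every regional contribution is dominated by $\P^{m_2^*-|\beta|}$.
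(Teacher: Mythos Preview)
Your proposal is correct and follows precisely the route the paper intends: the paper gives no separate proof of Proposition~\ref{p2} but states that it is proved ``similarly'' to Proposition~\ref{p1}, and your argument is exactly that adaptation---splitting into the three regions, noting that $\zint(N)$ and $\zext(N)$ carry no singular prefactors under the hypotheses of Proposition~\ref{p2}, and in $\zmid(N)$ using $t>t_{x,\xi}=N\,h(x,\xi)\,\theta(h)$ together with the monotonicity of $\theta$ to obtain $(\theta(t)/t)^{m_3'}\lesssim (\Phi(x)\langle\xi\rangle_k)^{m_3'}$, then absorbing the residual $\tilde\theta$ powers either trivially (when $m_4',m_5'\le 0$) or via $t^{1-\varepsilon}$ as in the last line of the proof of Proposition~\ref{p1}.
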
	
	
		\begin{prop}\label{p3}
			Let $a=a(t, x, \xi)$ be a symbol with
			\[
			\begin{aligned}
				a &\in  \Gint(\tilde m_1,\tilde m_2,N,\tilde m_3) \cap
				\Gmid(m_1',m_2',0,N,0,0) \\
				&\qquad \cap
				\Gext(m_1,m_2,0,0,N,0,0),
			\end{aligned}
			\]
			for $\tilde m_3 \geq 0.$
			Then, for any $\varepsilon \in (0,1),$
			\begin{alignat*}{2}
				a &\in L^{\infty}\left([0, T]  ; \Gt({m}_1^*,{m}_2^*,\omega) \right), \quad &&\text{ if } \tilde m_3= 0\\
				t^{1-\varepsilon}a &\in C\left([0, T] ; \Gt({m}_1^*,{m}_2^*,\omega) \right), \qquad && \text { otherwise}
			\end{alignat*}
			where ${m}_i^* = \max\{\tilde m_i,m_i',m_i\}, i=1,2.$
		\end{prop}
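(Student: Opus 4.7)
\textbf{Proof proposal for Proposition \ref{p3}.} The plan is to mirror the argument used for Proposition \ref{p1}, with the simplifications afforded by the vanishing of all the singular indices in $Z_{mid}$ and $Z_{ext}$. Concretely, I would split a generic $\partial_\xi^\alpha D_x^\beta a(t,x,\xi)$ according to the characteristic functions $\chi^{(1)}_N,\chi^{(2)}_N,\chi^{(3)}_N$ of the three zones and apply the defining estimates in each piece:
\begin{align*}
|\partial_\xi^\alpha D_x^\beta a(t,x,\xi)|
&\leq C_{\alpha\beta}\P^{-|\beta|}\Bigl(\chi^{(1)}_N\,\japxik^{\tilde m_1-|\alpha|}\om^{\tilde m_2}\tilde\theta(t)^{\tilde m_3} \\
&\qquad + \chi^{(2)}_N\,\japxik^{m_1'-|\alpha|}\om^{m_2'} + \chi^{(3)}_N\,\japxik^{m_1-|\alpha|}\om^{m_2}\Bigr).
\end{align*}
Setting $m_i^* = \max\{\tilde m_i, m_i', m_i\}$ and using that $\om \geq 1$ and $\japxik \geq k$, each of the three terms is bounded by $\japxik^{m_i^*-|\alpha|}\om^{m_2^*}\P^{-|\beta|}$ up to the residual factor $\tilde\theta(t)^{\tilde m_3}$ present only on the interior piece.

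The dichotomy then comes from handling $\tilde\theta(t)^{\tilde m_3}$. When $\tilde m_3=0$ the residual factor disappears, the resulting bound is uniform in $t\in[0,T]$, and the symbol lies in $L^\infty([0,T];G^{m_1^*,m_2^*}(\omega,\go))$. When $\tilde m_3>0$, I would use that $\tilde\theta$ is of logarithmic type (cf.\ \eqref{rel2}) so that for every $\varepsilon\in(0,1)$ one has $t^{1-\varepsilon}\tilde\theta(t)^{\tilde m_3}\to 0$ as $t\to 0^+$; multiplying through by $t^{1-\varepsilon}$ yields an estimate continuous up to $t=0$, giving $t^{1-\varepsilon}a \in C([0,T];G^{m_1^*,m_2^*}(\omega,\go))$.

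The only subtlety, and really the single point that requires a bit of care, is that the assertion keeps $\omega$ (rather than $\Phi$) as the order weight. That is automatic here: in contrast to Proposition \ref{p1}, no singular factor of the form $(\theta(t)/t)^{m_3}$ or $e^{m_4\psi(t)}$ appears in $Z_{ext}$, so there is no need to absorb a time-dependent quantity by trading $\omega$ for $\Phi$ via $\omega\lesssim\Phi$. This is why the conclusion improves from $G^{\cdot,\cdot}(\Phi,\go)$ to $G^{\cdot,\cdot}(\omega,\go)$. Since this step is essentially bookkeeping on the weight, I anticipate no serious obstacle; the argument is structurally identical to Proposition \ref{p1}, with the interior residual $\tilde\theta(t)^{\tilde m_3}$ replacing the roles played there by $(\theta(t)/t)^{m_3}e^{m_4\psi(t)}\tilde\theta(t)^{m_5+m_6(|\alpha|+|\beta|)}$ in $Z_{ext}$.
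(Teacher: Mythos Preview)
Your proposal is correct and matches the paper's own approach: the paper simply states that Propositions~\ref{p2} and~\ref{p3} are proved ``similarly'' to Proposition~\ref{p1}, and your argument is precisely that adaptation, exploiting the vanishing of the singular indices in $Z_{mid}$ and $Z_{ext}$ and absorbing $\tilde\theta(t)^{\tilde m_3}$ by $t^{-1+\varepsilon}$. Your observation that the weight remains $\omega$ rather than $\Phi$ (because no $\omega\lesssim\Phi$ trade-off is needed here) is exactly the point distinguishing this case from Proposition~\ref{p1}.
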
	
		\begin{rmk}\label{lb}
			Since the function $\tilde\theta$ is of logarthmic type, we have
			\[
				\tilde\theta(\tilde t_{x,\xi})^l \leq \tilde\theta(t_{x,\xi})^l \leq\tilde\theta(\h)^l \leq (\P\japxik)^{\varepsilon},
			\]
			for any $l>0$ and $\varepsilon \ll 1.$ Similar is the case for the functions $\theta$ and $\psi.$
		\end{rmk}
		
		For $\mu>0$ and $r \geq 2$, we set 
		\begin{linenomath*}
			\[
			Q_{r,\mu} = \{(x,\xi) \in \R^{2n} : \P < r, \japxik < r\}, \qquad Q_{r,\mu}^c = \R^{2n} \setminus Q_{r,\mu}.
			\]
		\end{linenomath*}
		
		\begin{prop}[Asymptotic Expansion]\label{p4}
			Let $\{a_j\}, j\geq0$ be a sequence of symbols with 
			\[
				\begin{aligned}
					a_j &\in G^{\tilde{m}_1,1}\{\tilde m_3\} (\omega^{\tilde{m}_2}\Phi^{-j},\go)^{(1)}_N \cap G^{{m}_1',1}\{m_3',m_4'+2m_5'j,m_5'\}(\omega^{{m}_2'}\Phi^{-j},\go)^{(2)}_N \\
					&\qquad \cap G^{{m}_1,1}\{m_3,m_4,m_5+2m_6j,m_6\}(\omega^{{m}_2}\Phi^{-j},\go)^{(2)}_N.
				\end{aligned}
			\]
			Then, there is a symbol
			$$
			\begin{aligned}
				a &\in \Gint(\tilde m_1,\tilde m_2,N,\tilde m_3) \cap
				\Gmid(m_1',m_2',m_3',N,m_4',m_5')\\ 
				& \qquad \cap
				\Gext(m_1,m_2,m_3,m_4,N,m_5,m_6)
			\end{aligned}
			$$
			such that
			$$
			a(t, x, \xi) \sim \sum_{j=0}^{\infty} a_{j}(t, x, \xi),
			$$
			that is for all $j_{0} \geq 1,$ $a(t, x, \xi)-\sum_{j=0}^{j_{0}-1} a_{j}(t, x, \xi) $ belongs to 
			$$
			\begin{aligned}
				&G^{\tilde{m}_1-j_0,1}\{\tilde m_3\}(\omega^{\tilde{m}_2}\Phi^{-j_0},\go)^{(1)}_N \cap G^{{m}_1'-j_0+\varepsilon,1}\{m_3',m_4',m_5'\}(\omega^{{m}_2'}\Phi^{-j_0+\varepsilon},\go)^{(2)}_N \\
				&\qquad \cap G^{{m}_1-j_0+\varepsilon,1}\{m_3,m_4,m_5,m_6\}(\omega^{{m}_2}\Phi^{-j_0+\varepsilon},\go)^{(2)}_N,
			\end{aligned}
			$$
			where $\varepsilon \ll 1.$ The symbol is uniquely determined modulo $C\left((0, T] ; G^{-\infty}\right) .$
		\end{prop}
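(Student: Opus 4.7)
The plan is to carry out a Borel-type summation adapted to the three-zone symbol calculus. The key observation is that each $a_j$ already carries a built-in decay $\Phi^{-j}$ in the weight, which we enhance by a phase-space cutoff concentrated where $\Phi(x)\japxik$ is large. Fix $\chi\in C^\infty(\R^{2n})$ with $0\le\chi\le 1$, $\chi\equiv 0$ on $Q_{1,1}$ and $\chi\equiv 1$ on $Q_{2,1}^c$. For a sequence $r_j\nearrow+\infty$ to be chosen, set $\chi_j(x,\xi):=\chi(\Phi(x)/r_j,\japxik/r_j)$; the slowly varying and subadditive properties of $\Phi$ from Section \ref{metric} together with the usual chain-rule estimates yield $|\partial_\xi^\alpha D_x^\beta \chi_j|\leq C_{\alpha\beta}\Phi(x)^{-|\beta|}\japxik^{-|\alpha|}$ uniformly in $j$, and $\Phi(x)\japxik\geq r_j$ on $\operatorname{supp}\chi_j$. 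I would then define
$$a(t,x,\xi):=\sum_{j=0}^{\infty}\chi_j(x,\xi)\,a_j(t,x,\xi).$$

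Next, I would pick the radii $r_j$ inductively so that, for every multi-index $(\alpha,\beta)$ with $|\alpha|+|\beta|\leq j-1$ and every $t\in(0,T]$, the $(\alpha,\beta)$-seminorm of $\chi_j a_j$ measured against the target class $\Gint(\tilde m_1,\tilde m_2,N,\tilde m_3)\cap\Gmid(m_1',m_2',m_3',N,m_4',m_5')\cap\Gext(m_1,m_2,m_3,m_4,N,m_5,m_6)$ is at most $2^{-j}$ in all three zones simultaneously. This is possible because on $\operatorname{supp}\chi_j$ one may trade any power $\Phi^{-1}$ (beyond the $|\beta|$ already consumed by the symbolic estimate) for a factor $r_j^{-1}\japxik$, and the resulting geometric series yields convergence of $a$ together with all its $t$- and phase-space derivatives. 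For the asymptotic expansion, for $j_0\geq 1$ I would decompose
$$a-\sum_{j=0}^{j_0-1}a_j=-\sum_{j=0}^{j_0-1}(1-\chi_j)a_j+\sum_{j\ge j_0}\chi_j a_j.$$
The first finite sum is $(x,\xi)$-compactly supported and therefore lies in every class with $\Phi$-weight tending to $-\infty$, while retaining the $t$-singular structure of the $a_j$'s; the tail is then handled zone by zone using the cutoff gain.

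In $\zint(N)$ the $\tilde\theta$-exponent in the hypothesis on $a_j$ is the fixed $\tilde m_3$, so the tail lies in $G^{\tilde m_1-j_0,1}\{\tilde m_3\}(\omega^{\tilde m_2}\Phi^{-j_0},\go)^{(1)}_N$ exactly, with no $\varepsilon$-loss. The main obstacle is the analogous computation in $\zmid(N)$ and $\zext(N)$: the $j$-dependent growth $\tilde\theta(t)^{m_4'+2m_5'j}$ and $\tilde\theta(t)^{m_5+2m_6 j}$ cannot be absorbed into a pure $\Phi^{-j_0}$ decay. Invoking Remark \ref{lb}, every such logarithmic factor is majorized by $(\Phi\japxik)^{\varepsilon}$ for arbitrarily small $\varepsilon>0$, which forces a degradation of the target exponents from $-j_0$ to $-j_0+\varepsilon$ in both the $\Phi$-weight and the $\japxik$-order, and explains precisely the exponents appearing in the stated error classes. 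Uniqueness modulo $C((0,T];G^{-\infty})$ then follows from the elementary observation that the intersection over $j_0$ of the error classes is contained in $C((0,T];G^{-\infty})$, since the $\Phi$- and $\japxik$-exponents of the weight both drift to $-\infty$. Beyond this $\varepsilon$-accounting, the construction is a routine adaptation of the standard Borel summation in H\"ormander-type global symbol calculi.
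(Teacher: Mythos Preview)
Your approach is essentially the same as the paper's: a Borel summation with phase-space cutoffs $\gamma_j$ supported where $\Phi$ and $\japxik$ are large, with the key step being the absorption of the $j$-dependent factors $\tilde\theta(t)^{2m_5'j}$ and $\tilde\theta(t)^{2m_6 j}$ in $\zmid(N)$ and $\zext(N)$ via Remark \ref{lb}, which is exactly what produces the $\varepsilon$-loss in the remainder classes. The only cosmetic differences are that the paper dilates the variables directly, writing $\gamma_j(x,\xi)=1-\chi(\varepsilon_j x,\varepsilon_j\xi)$ with $\varepsilon_j\le 2^{-j}$, rather than cutting off in $(\Phi(x)/r_j,\japxik/r_j)$, and that it does not isolate the finite piece $\sum_{j<j_0}(1-\chi_j)a_j$ separately; be aware that your compact-support claim for that piece can fail when $\Phi$ is bounded, though the conclusion still holds trivially in that case.
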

				
		\begin{proof}
			Let us fix $\varepsilon \ll 1$ and set $\mu=1-\varepsilon.$
			Consider a $C^{\infty}$ cut-off function, $\chi$ defined by
			\begin{linenomath*}
				$$
				\chi(x,\xi)=\left\{\begin{array}{ll}
					1, & (x,\xi) \in Q_{2k,\mu} \\
					0, & (x,\xi) \in Q_{4k,\mu}^c
				\end{array}\right.
				$$
			\end{linenomath*}
			and $0 \leq \chi \leq 1 .$ For a sequence of positive numbers $\varepsilon_{j} \rightarrow 0$, we define
			\begin{linenomath*}
				$$
				\begin{aligned}
					\gamma_0(x,\xi) &\equiv 1, \\
					\gamma_{{j}}(x,\xi) &=1-\chi\left(\varepsilon_{j} x,\varepsilon_{j} \xi\right), \quad j \geq 1.
				\end{aligned}
				$$
			\end{linenomath*}
			We note that $\gamma_{{j}}(x,\xi)=0$ in $Q_{2k,\mu}$ for $j \geq 1.$  We choose $\varepsilon_{j}$ such that
			\begin{linenomath*}
				$$
				\varepsilon_{j} \leq 2^{-j}
				$$
			\end{linenomath*}
			and set
			\begin{linenomath*}
				$$
				a(t, x, \xi)=\sum_{j=0}^{\infty} \gamma_{{j}}(x,\xi) a_{j}(t, x, \xi).
				$$
			\end{linenomath*}
			We note that $a(t, x, \xi)$ exists (i.e. the series converges point-wise), since for any fixed point $(t, x, \xi)$ only a finite number of summands contribute to $a(t, x, \xi) .$ Indeed, for fixed $(t, x, \xi)$ we can always find a $j_{0}$ such that $\P < \frac{1}{\varepsilon_{j_0}}$,  $\japxik<\frac{1}{\varepsilon_{j_0}}$ and hence
			\begin{linenomath*}
				$$
				a(t, x, \xi)=\sum_{j=0}^{j_{0}-1} \gamma_{j}(x,\xi) a_{j}(t, x, \xi).
				$$
			\end{linenomath*}
			Observe that
			\begin{linenomath*}
				$$
				\begin{aligned}
					|D_{x}^{\beta} \partial_{\xi}^{\alpha}\left(\gamma_{{j}}(x,\xi) a_{j}(t, x, \xi)\right)|& \leq \sum\limits_{\substack{\alpha^{\prime}+\alpha^{\prime \prime}=\alpha \\ \beta^{\prime}+\beta^{\prime \prime}=\beta}}\left(\begin{array}{c}
						\alpha \\
						\alpha^{\prime}
					\end{array}\right) \left(\begin{array}{c}
						\beta \\
						\beta^{\prime}
					\end{array}\right) |\partial_{\xi}^{\alpha^{\prime}} D_x^{\beta^\prime} \gamma_{j}(x,\xi) D_{x}^{\beta^{\prime \prime}} \partial_{\xi}^{\alpha^{\prime \prime}} a_{j}(t, x, \xi)| \\
					&\leq  \mid \gamma_{{j}}(x,\xi) D_{x}^{\beta} \partial_{\xi}^{\alpha} a_{j}(t, x, \xi) \\
					&\qquad +\sum\limits_{\substack{\alpha^{\prime}+\alpha^{\prime \prime}=\alpha,|\alpha^{\prime}|>0 \\ \beta^{\prime}+\beta^{\prime \prime}=\beta,|\beta^{\prime}|>0 } }   C_{\alpha^{\prime} \beta^{\prime}}  \frac{\tilde{\chi}_{{j}}(x,\xi)}{\P^{|\beta^{\prime}|} \japxik^{|\alpha^{\prime}|}}D_{x}^{\beta^{\prime \prime}} \partial_{\xi}^{\alpha^{\prime \prime}} a_{j}(t, x, \xi) \mid,
				\end{aligned}
				$$
			\end{linenomath*}
			where $\tilde{\chi}_{{j}}(x,\xi)$ is a smooth cut-off function supported in $Q_{2k,\mu}^c \cap Q_{4k,\mu}.$
			This new cut-off function describes the support of the derivatives of $\gamma_{j}(x,\xi) .$ In the last estimate, we also used that $\frac{1}{\varepsilon_{j}}\sim \japxik$ and $\frac{1}{\varepsilon_{j}} \sim \P$ if $\tilde{\chi}_{j}(x,\xi) \neq 0 .$ We conclude that
			\begin{linenomath*}
				\begin{equation}\label{asym1}
				\begin{aligned}
					|D_{x}^{\beta} &\partial_{\xi}^{\alpha} \gamma_{j}(x,\xi) a_{j}(t, x, \xi)| \\
					& \leq \frac{1}{2^j} \japxik^{\mu-j-|\alpha|} \P^{\mu-j-|\beta|}\Bigg[ \c(1,N)  \japxik^{\tilde m_1} \om^{\tilde m_2} \tilde \theta(t)  \\
					&\qquad +  \c(2,N) \japxik^{m_1'} \om^{m_2'} \left( \frac{\theta(t)}{t}\right)^{m_3'} \tilde \theta(t)^{m_4'+m_5'(|\alpha| + |\beta| + 2j)}\\
					& \qquad + \c(3,N) \japxik^{m_1} \om^{m_2} 
					 \left(\frac{\theta(t)}{t}\right)^{m_3} e^{m_4\psi(t)} \tilde \theta(t)^{m_5+m_6(|\alpha| + |\beta| + 2j)}  \Bigg],
				\end{aligned}
				\end{equation}
			\end{linenomath*}
			where we have estimated $\frac{\P^{\mu}}{ 2^{j}} \geq 1$and $\frac{\japxik^{\mu}}{ 2^{j}} \geq 1$ (due to the support of cut-off functions) once in each summand.
			In $\zmid(N)$ and $\zext(N),$ $\tilde \theta(t) \leq \tilde \theta(h).$ For any $r \geq 0$ and $ j\geq 1,$
			\begin{equation}\label{asym2}
				\tilde \theta(t)^{rj} \leq \tilde \theta(h)^{rj} \lesssim (\P\japxik)^{\varepsilon},
			\end{equation}
			as the singularity in our consideration is of logarithmic type.
			From (\ref{asym1}) and (\ref{asym2}), we obtain
			\begin{linenomath*}
				\begin{equation}\label{asym3}
					\begin{aligned}
						|D_{x}^{\beta} &\partial_{\xi}^{\alpha} \gamma_{j}(x,\xi) a_{j}(t, x, \xi)| \\
						& \leq \frac{1}{2^j} \japxik^{\mu-j-|\alpha|} \P^{\mu-j-|\beta|}\Bigg[ \c(1,N)  \japxik^{\tilde m_1} \om^{\tilde m_2} \tilde \theta(t)  \\
						&\qquad +  \c(2,N) \japxik^{m_1'+\varepsilon} \P^{\varepsilon} \om^{m_2'} \left( \frac{\theta(t)}{t}\right)^{m_3'} \tilde \theta(t)^{m_4'+m_5'(|\alpha| + |\beta| )}\\
						& \qquad + \c(3,N) \japxik^{m_1+\varepsilon} \P^{\varepsilon} \om^{m_2} 
						\left(\frac{\theta(t)}{t}\right)^{m_3} e^{m_4\psi(t)} \tilde \theta(t)^{m_5+m_6(|\alpha| + |\beta| )}  \Bigg]\\
						& \leq \frac{1}{2^j} \japxik^{1-j-|\alpha|} \P^{1-j-|\beta|}\Bigg[ \c(1,N)  \japxik^{\tilde m_1} \om^{\tilde m_2} \tilde \theta(t)  \\
						&\qquad +  \c(2,N) \japxik^{m_1'}  \om^{m_2'} \left( \frac{\theta(t)}{t}\right)^{m_3'} \tilde \theta(t)^{m_4'+m_5'(|\alpha| + |\beta| )}\\
						& \qquad + \c(3,N) \japxik^{m_1}  \om^{m_2} 
						\left(\frac{\theta(t)}{t}\right)^{m_3} e^{m_4\psi(t)} \tilde \theta(t)^{m_5+m_6(|\alpha| + |\beta| )}  \Bigg],
					\end{aligned}
				\end{equation}
			\end{linenomath*}
			where $j \geq 1.$ Now
			\begin{linenomath*}
				$$
				\begin{aligned}
					|D_{x}^{\beta} &\partial_{\xi}^{\alpha} a(t,x, \xi) | \\		
					\leq &\left|D_{x}^{\beta} \partial_{\xi}^{\alpha}\left(\gamma_{0}(x,\xi) a_{0}(t, x, \xi)\right)\right| + \sum_{j=1}^{j_{0}-1}\left|D_{x}^{\beta} \partial_{\xi}^{\alpha}\left(\gamma_{j}(x,\xi) a_{j}(t, x, \xi)\right)\right| .
				\end{aligned}
				$$
			\end{linenomath*}
			Combining the symbol estimate of $a_0$ and the estimate (\ref{asym3}), we readily obtain
			\begin{linenomath*}
			$$
			\begin{aligned}
				a &\in \Gint(\tilde m_1,\tilde m_2,N,\tilde m_3) \cap
				\Gmid(m_1',m_2',m_3',N,m_4',m_5')\\ 
				& \qquad \cap
				\Gext(m_1,m_2,m_3,m_4,N,m_5,m_6)
			\end{aligned}
			$$
			\end{linenomath*}
			Arguing as above, we see that $\sum_{j=j_{0}}^{\infty} \gamma_{j} a_{j}$ belongs to
			\begin{linenomath*}
				\[ 
					\begin{aligned}
					&G^{\tilde{m}_1-j_0,1}\{\tilde m_3\}(\omega^{\tilde{m}_2}\Phi^{-j_0},\go)^{(1)}_N \cap G^{{m}_1'-j_0+\varepsilon,1}\{m_3',m_4',m_5'\}(\omega^{{m}_2'}\Phi^{-j_0+\varepsilon},\go)^{(2)}_N \\
					&\qquad \cap G^{{m}_1-j_0+\varepsilon,1}\{m_3,m_4,m_5,m_6\}(\omega^{{m}_2}\Phi^{-j_0+\varepsilon},\go)^{(2)}_N,
					\end{aligned}
				\]
			\end{linenomath*}
			and thus, $a(t, x, \xi)-\sum\limits_{j=0}^{j_{0}-1} a_{j}(t, x, \xi)$ belongs to
			\begin{linenomath*}
				$$ 
					\begin{aligned}
						&G^{\tilde{m}_1-j_0,1}\{\tilde m_3\}(\omega^{\tilde{m}_2}\Phi^{-j_0},\go)^{(1)}_N \cap G^{{m}_1'-j_0+\varepsilon,1}\{m_3',m_4',m_5'\}(\omega^{{m}_2'}\Phi^{-j_0+\varepsilon},\go)^{(2)}_N \\
						&\qquad \cap G^{{m}_1-j_0+\varepsilon,1}\{m_3,m_4,m_5,m_6\}(\omega^{{m}_2}\Phi^{-j_0+\varepsilon},\go)^{(2)}_N.
					\end{aligned}
				$$
			\end{linenomath*}
			Lastly, we use Propositions \ref{p1} - \ref{p3} to conclude that 
			\begin{linenomath*}
				$$
				t^{1-\varepsilon}a_{j} \in C\left([0, T] ; G^{ m_1^*-j,1}(\omega^{ m_2^*}\Phi^{r^*-j},\go) \right)
				$$
			\end{linenomath*}
			for $m_1^*=\max\{\tilde m_1,m_1'+m_3',m_1+m_3\},$ $m_1^*=\max\{\tilde m_2,m_2',m_2\}$ and $r^*=\max\{m_3',m_3\}.$
			As $j$ tends to $+\infty,$ the intersection of all those spaces belongs to the space {\footnotesize{$C\left((0, T] ; G^{-\infty}\right) .$}} This completes the proof.
		\end{proof}
		
		\begin{lem}
		Let $A$ and $B$ be pseudodifferential operators with symbols
		\begin{linenomath*}
			$$
				\begin{aligned}
					a=\sigma(A) & \in\Gint(\tilde m_1,\tilde m_2,2N,\tilde m_3) \cap
					\Gmid(m_1',m_2',m_3',N,m_4',m_5') \\
					& \qquad \cap
					\Gext(m_1,m_2,m_3,m_4,N,m_5,m_6)
				\end{aligned}
			$$
		\end{linenomath*}
		and
		\begin{linenomath*}
			$$
			 \begin{aligned}
			 	b=\sigma(B) &\in \Gint(\tilde l_1,\tilde l_2,2N,\tilde l_3) \cap
			 	\Gmid(l_1',l_2',l_3',N,l_4',l_5')\\
			 	& \qquad \cap	 	\Gext(l_1,l_2,l_3,l_4,N,l_5,l_6).
			 \end{aligned}
			$$
		\end{linenomath*}
		Then, the pseudodifferential operator $C=A \circ B$ has a symbol $c=\sigma(C) $ in
		\begin{linenomath*}
			$$
			\begin{aligned}
				&\Gint(\tilde m_1 + \tilde l_1,\tilde m_2 + \tilde l_2,2N,\tilde m_3+ \tilde l_3) \cap
				\Gmid(m_1'+l_1', m_2'+l_2', m_3'+l_3',N,m_4+l_4',m_5'+l'_5) \\
				& \qquad \cap
				\Gext(m_1+l_1,m_2+l_2,m_3+l_3,m_4+l_4,N,m_5+l_5,m_6+l_6)
			\end{aligned}
			$$
		\end{linenomath*}
		and satisfies
		\begin{equation} \label{a4}
			c(t, x, \xi) \sim \sum_{\alpha \in \mathbb{N}^{n}} \frac{1}{\alpha !} \partial_{\xi}^{\alpha} a(t, x, \xi) D_{x}^{\alpha} b(t, x, \xi).
		\end{equation}
		The operator $C$ is uniquely determined modulo an operator with symbol from\\ $C\left((0, T] ;G^{-\infty} \right).$
	\end{lem}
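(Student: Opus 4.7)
The plan is to follow the standard H\"ormander/Beals--Fefferman composition calculus for the metric $\go$ (as in \cite[Sections 1.2 \& 3.1]{nicRodi}), and then to track, zone by zone, how the singular factors $\tilde\theta(t)$, $\theta(t)/t$ and $e^{\psi(t)}$ combine. The starting point is the standard oscillatory integral representation
\[
c\var = \iint e^{-iy\cdot\eta}\, a(t,x,\xi+\eta)\, b(t,x+y,\xi)\, dy\, \textit{\dj}\eta,
\]
to which one applies Taylor expansion of $a(t,x,\xi+\eta)$ in $\eta$ around $0$ up to order $M$. The principal part reproduces the formal sum in (\ref{a4}) truncated at $|\alpha|<M$, and what remains is an oscillatory remainder $r_M\var$ to be controlled by integration by parts.

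For each principal term $\frac{1}{\alpha!}\partial_\xi^\alpha a\var \, D_x^\alpha b\var$, the symbol bound is verified separately on each of the three zones. Since $\partial_\xi$ lowers the $\japxik$ order by $1$ and $D_x$ lowers the $\P$ order by $1$ within each of $\Gint$, $\Gmid$, $\Gext$, a direct Leibniz-type product shows that on $\zint(2N)$ the product lies in $\Gint(\tilde m_1+\tilde l_1-|\alpha|,\tilde m_2+\tilde l_2,2N,\tilde m_3+\tilde l_3)$, and analogously for $\zmid(N)$ and $\zext(N)$ with the $\tilde\theta,\theta,\psi$ exponents adding up as stated. Hence the family $\{\tfrac{1}{\alpha!}\partial_\xi^\alpha a \cdot D_x^\alpha b\}_{\alpha}$ fits precisely the hypothesis of Proposition \ref{p4}, which then produces a genuine symbol $c$ with the asymptotic expansion (\ref{a4}) belonging to the claimed triple intersection; the uniqueness statement modulo $C((0,T];G^{-\infty})$ is immediate from the same proposition.

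The main technical step, and the chief obstacle, is the precise control of the remainder
\[
r_M\var = \sum_{|\alpha|=M} \tfrac{M}{\alpha!}\! \iint\! e^{-iy\cdot\eta} y^\alpha\! \int_0^1\! (1-s)^{M-1}(\partial_\xi^\alpha a)(t,x,\xi+s\eta)\,ds\,(D_x^\alpha b)(t,x+y,\xi)\,dy\,\textit{\dj}\eta,
\]
which is handled by iterated integration by parts in $y$ (using the weight $\japxik^{-1}$) and in $\eta$ (using $\P^{-1}$), exactly as in the classical Beals--Fefferman argument. The delicate point is that $a$ is evaluated at $(x,\xi+s\eta)$ and $b$ at $(x+y,\xi)$, so the indicator functions $\c(i,N)$ at these shifted points must be compared with those at $(x,\xi)$. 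This is where the slowly varying and temperate properties of $\Phi$, the equivalence $\langle \xi+s\eta\rangle_k\sim\japxik$ on the effective support $|s\eta|\lesssim\japxik$, and the strong uncertainty principle $\h\leq(1+|x|+|\xi|)^{-\kappa}$ are invoked: together they guarantee that the splitting points $\zi$ and $\zii$, as well as the values of $\tilde\theta(\h)$, $\theta(\h)$ and $e^{\psi(\h)}$, are stable up to multiplicative constants under the relevant shifts, so that the zone-dependent bounds for $a$ and $b$ at shifted arguments can be transferred to bounds at $(x,\xi)$. Once this zone stability is in place, the remainder $r_M$ lies in the triple intersection with order decremented by $M$ in both $\japxik$ and $\P$, and the asymptotic sum is completed via Proposition \ref{p4}.
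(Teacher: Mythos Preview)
Your proposal is correct and follows the same overall route as the paper --- the standard composition asymptotic expansion together with Proposition~\ref{p4} for the asymptotic sum --- but you spell out in detail the oscillatory-integral remainder and the zone-stability argument that the paper leaves entirely implicit. The paper's own proof is only two sentences: it invokes Propositions~\ref{p1}--\ref{p4} to see that $C$ is a well-defined pseudodifferential operator, and then cites the standard composition rules in \cite[Section~1.2]{nicRodi} for the expansion~(\ref{a4}); in particular, it does not explicitly address the shift-stability of the zones that you (rightly) identify as the one nontrivial technical point.
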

	\begin{proof}
		In view of Propositions \ref{p1} - \ref{p4}, it is clear that the operator $C$ is a well-defined pseudodifferential operator. Relation (\ref{a4}) is a direct consequence of the standard composition rules (see \cite[Section 1.2]{nicRodi}).
	\end{proof}

	\begin{lem}
		Let $A$ be a pseudodifferential operator with an invertible symbol
		\begin{linenomath*}
			$$
			a=\sigma(A) \in \Gint(0,0,N,0) \cap
			\Gmid(0,0,0,N,0,0) \cap
			\Gext(0,0,0,0,N,0,0).
			$$
		\end{linenomath*}
		Then, there exists a parametrix $A^{\#}$ with symbol $a^{\#}$ in 
		\begin{linenomath*}
			$$
			\Gint(0,0,N,0) \cap
			\Gmid(0,0,0,N,0,0) \cap
			\Gext(0,0,0,0,N,0,0).
			$$
		\end{linenomath*}
	\end{lem}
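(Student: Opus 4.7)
The plan is to follow the standard elliptic parametrix construction adapted to the three-region symbol classes, relying on the composition lemma and the asymptotic summation result (Proposition~\ref{p4}) already established. Since $a$ is invertible and satisfies the symbol estimates with zero orders in every region, the pointwise reciprocal $a_{0}^{\#}(t,x,\xi):=1/a(t,x,\xi)$ is well defined. I would first verify that $a_{0}^{\#}$ lies in the same intersection as $a$: derivatives of $1/a$ are polynomials in derivatives of $a$ divided by powers of $a$, and since $|a|\geq c>0$ and the admissible derivative estimates are closed under such algebraic operations on each of $Z_{int}(N)$, $Z_{mid}(N)$, $Z_{ext}(N)$, this is routine from the definitions.

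Next, I would apply the composition lemma to form $B:=A_{0}^{\#}\circ A$ with symbol $b(t,x,\xi)$ admitting the asymptotic expansion $b\sim\sum_{\alpha}\frac{1}{\alpha!}\partial_{\xi}^{\alpha}a_{0}^{\#}\,D_{x}^{\alpha}a$. The leading term is $1$, so $b=1-r$ where $r$ is given (modulo $G^{-\infty}$) by $-\sum_{|\alpha|\geq 1}\frac{1}{\alpha!}\partial_{\xi}^{\alpha}a_{0}^{\#}\,D_{x}^{\alpha}a$. Each summand gains a factor $\langle\xi\rangle_{k}^{-|\alpha|}\Phi^{-|\alpha|}$, so $r$ belongs, on each of the three regions, to the corresponding symbol class with first two indices replaced by $(-1,-1)$; in particular $\sigma(R)\in G^{-1+\varepsilon,-1+\varepsilon}(\Phi,g_{\Phi,k})$ by Propositions~\ref{p1}--\ref{p3} and Remark~\ref{lb}, for any small $\varepsilon>0$. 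Choosing $k$ large enough, the operator norm of $R$ is strictly less than one.

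I then invert $I-R$ by a Neumann series: set $\tilde{r}\sim\sum_{j\geq 1}r^{\#j}$, using Proposition~\ref{p4} to sum asymptotically, since each successive composition $r^{\#j}$ lives in the class with first two indices $(-j,-j)$ on each region and decay $(\Phi\langle\xi\rangle_{k})^{-j}$ as required for asymptotic summability. The resulting symbol $(1+\tilde{r})$ lies, by the same propositions, in the intersection $\Gint(0,0,N,0)\cap\Gmid(0,0,0,N,0,0)\cap\Gext(0,0,0,0,N,0,0)$. Finally, I define $a^{\#}:=(1+\tilde{r})\#a_{0}^{\#}$; by the composition lemma this is again in the same intersection, and by construction $A^{\#}\circ A=I$ modulo an operator whose symbol is in $C((0,T];G^{-\infty})$. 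The dual identity $A\circ A^{\#}=I \bmod G^{-\infty}$ follows by the symmetric argument starting from $a\#a_{0}^{\#}$, or equivalently by uniqueness of parametrices of elliptic operators modulo smoothing.

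The main obstacle is purely bookkeeping: one must check that the three-region structure is preserved at each step — that the reciprocal, the composition, and the asymptotic Neumann summation all respect the intersection, and that the gains in order produced by $\partial_{\xi}^{\alpha}D_{x}^{\alpha}$ outweigh the powers of $\tilde{\theta}(t)$, $\theta(t)/t$ and $e^{\psi(t)}$ that could potentially be introduced. Here the hypothesis that all the singular indices in the three classes vanish is crucial: no $t$-singular factors are generated by differentiation, so the composition and Neumann series remain inside the same zero-order intersection, and Remark~\ref{lb} absorbs the $(\Phi\langle\xi\rangle_{k})^{\varepsilon}$ losses into the order gain $-j$. This makes the summation unambiguous and Proposition~\ref{p4} directly applicable.
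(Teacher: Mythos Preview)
Your argument is correct and uses the same core ingredients as the paper (the pointwise reciprocal $a_{0}^{\#}=a^{-1}$, the composition lemma, and the asymptotic summation of Proposition~\ref{p4}), but the packaging differs. The paper builds the parametrix symbol directly by the classical recursion
\[
a\,a_{j}^{\#}=-\sum_{1\le|\alpha|\le j}\frac{1}{\alpha!}\,\partial_{\xi}^{\alpha}a\,D_{x}^{\alpha}a_{j-|\alpha|}^{\#},
\]
checks that each $a_{j}^{\#}$ lands in $G^{-j,1}\{0\}(\Phi^{-j},g_{\Phi,k})^{(1)}_{N}\cap\cdots$, sums via Proposition~\ref{p4} to obtain a right parametrix, and then remarks that the left parametrix is built symmetrically and coincides with it modulo $G^{-\infty}$. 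You instead form $A_{0}^{\#}A=I-R$ and invert $I-R$ by a symbolic Neumann series $\tilde r\sim\sum_{j\ge1}r^{\#j}$. The two routes are equivalent: your $r^{\#j}$ and the paper's $a_{j}^{\#}$ agree up to regrouping of terms of the same total order, and both invocations of Proposition~\ref{p4} are legitimate precisely because, as you note, all singular indices vanish so no $\tilde\theta,\theta/t,e^{\psi}$ factors are produced by the iterated compositions. One small remark: your sentence ``choosing $k$ large enough, the operator norm of $R$ is strictly less than one'' is unnecessary here, since you go on to perform a \emph{symbolic} Neumann summation via Proposition~\ref{p4} rather than an operator-theoretic one; you may drop that line without affecting the argument.
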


	\begin{proof}
		We use the existence of the inverse of $a$ and set
		\begin{linenomath*}
			$$
			\begin{aligned}
				a_{0}^{\#}(t, x, \xi)=a(t, x, \xi)^{-1} & \in \Gint(0,0,N,0) \cap
				\Gmid(0,0,0,N,0,0)\\
				&\qquad \cap
				\Gext(0,0,0,0,N,0,0).
			\end{aligned}
			$$
		\end{linenomath*}
		In view of Propositions \ref{p1} - \ref{p3}, one can define a sequence $a_{j}^{\#}(t, x, \xi)$ recursively by
		\begin{linenomath*}
			$$
			\sum_{1 \leq|\alpha| \leq j} \frac{1}{\alpha !} \partial_{\xi}^{\alpha} a(t, x, \xi) D_{x}^{\alpha} a_{j-|\alpha|}^{\#}(t, x, \xi)=-a(t, x, \xi) a_{j}^{\#}(t, x, \xi)
			$$
		\end{linenomath*}
		with $a_{j}^{\#} $ in
		\begin{linenomath*}
			$$
			G^{-j,1}\{0\}(\Phi^{-j},\go)^{(1)}_N \cap G^{-j,1}\{0,0,0\} (\Phi^{-j},\go)^{(2)}_N  \cap G^{-j,1}\{0,0,0,0\} (\Phi^{-j},\go)^{(3)}_N.
			$$
		\end{linenomath*}
		Proposition \ref{p4} then yields the existence of a symbol
		\begin{linenomath*}
			$$
			a_{R}^{\#} \in  \Gint(0,0,N,0) \cap
			\Gmid(0,0,0,N,0,0) \cap
			\Gext(0,0,0,0,N,0,0)
			$$
		\end{linenomath*}
		and a right parametrix $A_{R}^{\#}(t, x, \xi)$ with symbol $\sigma\left(A_{R}^{\#}\right)=a_{R}^{\#} .$ We have
		\begin{linenomath*}
			$$
			A A_{R}^{\#}-I \in C\left([0, T] ; G^{-\infty} \right).
			$$
		\end{linenomath*}
		The existence of a left parametrix follows in similar lines. One can also prove the existence of a parametrix $A^{\#}$ by showing that right and left parametrix coincide up to a regularizing operator.
	\end{proof}
	
	\section{Appendix II: Sharp G\r{a}rding Inequality For Parameter Dependent Matrix}\label{app2}
	
	In this section we prove the sharp G\r{a}rding inequality for a matrix pseudodifferential operator with symbol $a \var $ in 
	\begin{equation}\label{cl}
		\Gint(1,1,N,1)  \cap \Gmid(0,0,1,N,0,1) \cap \Gext(-1,-1,2,1,N,2,1).
	\end{equation}
	This is used Sections \ref{Energy1} and  \ref{Energy2} to arrive at an energy estimate.
	By Propositions \ref{p1} and \ref{p2}, for any $\varepsilon \in (0,1),$
		\begin{alignat}{3}
			a &\in L^{\infty} \left([0,1];G^{1,1}(\Phi,\go)\right), && \qquad \text{ if } \tilde \theta \text{ is bounded,}\\
			\label{cl2}
			t^{1-\varepsilon}a &\in C\left([0,1];G^{1,1}(\Phi,\go)\right), && \qquad \text{ otherwise.}
		\end{alignat}
	We give below the sharp G\r{a}rding inequality when $\tilde\theta$ is unbounded. The case for bounded $\tilde\theta$ follows in similar lines by taking $\varepsilon=1$ in the proof.
	\begin{thm}\label{sg}
		Let  $a(t,x,\xi)$ be $2 \times 2$ positive semi-definite matrix belonging to the symbol class given in (\ref{cl}). Then, for each $t \in [0,T]$ and $\varepsilon \in (0,1)$ we have 
		\begin{equation}\label{sg4}
			\left(t^{1-\varepsilon} a^{w}(t,x, D) u, u\right) \geqq-C\|u\|^{2}, \quad u \in \mathcal{S}\left(\mathbb{R}^{n};\R^2\right).
		\end{equation}
	\end{thm}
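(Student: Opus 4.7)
The plan is to apply the matrix-valued Sharp G\r{a}rding inequality in the Beals--Fefferman framework to the rescaled symbol $\tilde{a}(t,x,\xi) := t^{1-\varepsilon} a(t,x,\xi)$, treated as a one-parameter family of positive semi-definite matrix symbols in $G^{1,1}(\Phi,g_{\Phi,k})$ with seminorms bounded uniformly in $t \in [0,T]$. The key reduction is already built into the hypothesis: by the observation preceding the theorem, Propositions \ref{p1}--\ref{p2} place $\tilde a$ into this single symbol class of fixed order, so I essentially need only verify admissibility of the metric and invoke the classical theorem.

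First, I would collect the admissibility properties of $g_{\Phi,k}$. The conditions on $\Phi$ stated in Section \ref{metric}---positivity, sub-linearity, slow variation, temperance, and subadditivity---together with $\langle \xi \rangle_k \geq k$ imply that $g_{\Phi,k}$ is a slowly varying and temperate H\"ormander metric with Planck function $h(x,\xi) = (\Phi(x)\langle\xi\rangle_k)^{-1}$. The bounds $1 \leq \Phi(x) \lesssim \langle x\rangle$ and $\langle\xi\rangle_k \gtrsim 1$ yield $h(x,\xi) \leq (1+|x|+|\xi|)^{-\kappa}$ for some $\kappa>0$, i.e.\ the strong uncertainty principle. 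This places $g_{\Phi,k}$ within the setting of \cite[Theorem 18.6.14]{Horm} (see also \cite[Chapter 2]{Lern} and \cite[Section 1.2]{nicRodi}), in the version valid for matrix-valued symbols via Friedrichs symmetrization.

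Next, since $t^{1-\varepsilon}\geq 0$ and $a(t,x,\xi)$ is pointwise positive semi-definite, so is $\tilde a(t,x,\xi)$. Applying the matrix-valued Sharp G\r{a}rding inequality (Weyl quantization, metric $g_{\Phi,k}$, order function $m(x,\xi)=\omega(x)\langle\xi\rangle_k$) to $\tilde a(t,\cdot,\cdot)$ yields
\[
\bigl(\tilde a^w(t,x,D) u, u\bigr)_{L^2} \;\geq\; -C_t\, \|u\|_{L^2(m\cdot h)}^2,
\]
with $C_t$ depending on finitely many seminorms of $\tilde a(t,\cdot,\cdot)$ in $G^{1,1}(\Phi,g_{\Phi,k})$. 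The crucial structural assumption $\omega \lesssim \Phi$ imposed at the end of Section \ref{metric} gives $m\cdot h = \omega/\Phi \lesssim 1$, so the remainder collapses to $C_t\|u\|_{L^2}^2$. Because the constants in Propositions \ref{p1}--\ref{p2} do not depend on $t$, all seminorms of $\tilde a(t,\cdot,\cdot)$ are bounded uniformly on $[0,T]$, and one may take $C := \sup_{t\in[0,T]} C_t < \infty$. This delivers (\ref{sg4}).

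The main obstacle I foresee is not the reduction itself but two verifications that should be discharged carefully: (i) the passage from the left quantization used throughout the paper to the Weyl quantization implicit in $a^w$, which amounts to controlling the subprincipal correction in $G^{0,0}(\Phi,g_{\Phi,k})$ uniformly in $t$---routine once uniform symbol bounds are in hand; and (ii) the extension of the scalar Sharp G\r{a}rding to the $2\times 2$ matrix case via Friedrichs symmetrization, which requires checking that the symmetrization preserves membership in $G^{1,1}(\Phi,g_{\Phi,k})$ with seminorms controlled independently of $t$. The essential ingredient that makes the whole scheme yield an $L^2$ bound, rather than a bound of lower Sobolev order, is precisely the inequality $\omega \lesssim \Phi$ built into the geometric setup.
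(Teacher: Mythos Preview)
Your argument is correct and uses the same core ingredients as the paper: positive semi-definiteness of the rescaled symbol, the matrix Sharp G\r{a}rding inequality of \cite[Theorem~18.6.14]{Horm} for the admissible metric $g_{\Phi,k}$, and the structural assumption $\omega\lesssim\Phi$ that forces the remainder to be $L^2$-bounded. The route, however, is more streamlined than the paper's. You treat the membership $t^{1-\varepsilon}a\in C([0,T];G^{1,1}(\Phi,g_{\Phi,k}))$ as a black box granted by (\ref{cl2}) and then invoke the general theorem directly, with uniform constants following from the uniform seminorm bounds. The paper instead reopens H\"ormander's proof: it performs the symplectic reduction $g_{\Phi,k}\mapsto h\cdot e$, writes down the first-order Taylor expansion of $a$ at the origin, and inserts the \emph{region-wise} seminorm bounds (\ref{sg11})--(\ref{sg13}) to obtain the positivity inequalities (\ref{t1})--(\ref{t3}) separately in $\zint$, $\zmid$, $\zext$; only then does it multiply through by $t^{1-\varepsilon'}$, use that the zone definitions convert the $\theta/t$ and $e^{\psi}$ factors into powers of $\tilde\theta$, absorb those logarithmic powers into $t^{\varepsilon'-\varepsilon}$, and hand off to the remainder of H\"ormander's argument. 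Your approach hides this zone bookkeeping inside the earlier propositions and is cleaner; the paper's makes explicit exactly where each zone estimate and each logarithmic absorption enters the G\r{a}rding machinery, which is informative given how central those zones are to the rest of the paper. Both arrive at the same estimate with the same dependence on the hypotheses.
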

	\begin{proof}
		We have the following region-wise seminorms for $a \var$:
			\begin{alignat*}{3}
				& \text{in } \zint(N),  \qquad && |a|_{j}^{g}(w) && \leq h^{-1} \tilde\theta,\\
				& \text{in } \zmid(N),   && |a|_{j}^{g}(w) && \leq \frac{\theta(t)}{t}\tilde\theta^j,\\
				& \text{in } \zext(N),  && |a|_{j}^{g}(w) && \leq h\left(\frac{\theta(t)}{t}\right)^2e^{\psi(t)} {\tilde\theta}^{2+j}.\\
			\end{alignat*}
		Here $w\in \R^{2n}$, $g=\go$ and the norm of the $j^{th}$ differential at $x$ given by
		\[
			|f|_j^g := \sup_{y_l \in \R^{2n}} |f^{(j)}(x;y_1,\dots,y_j)| \Big/ \prod_{1}^{j} g(y_l)^{\frac{1}{2}}.
		\] 
		Using an appropriate affine symplectic transformation one can assume that $\go=h e$ where $e$ is the Euclidean metric form. Then
		\begin{alignat}{3}
			\label{sg11}
			& \text{in } \zint(N),  \qquad && |a|_{j}^{e}(w) && \leq h^{(j-2) / 2}\tilde\theta,\\
			\label{sg12}
			& \text{in } \zmid(N),   && |a|_{j}^{e}(w) && \leq h^{j / 2}\frac{\omega(t)}{t}\tilde\theta^j,\\
			\label{sg13}
			& \text{in } \zext(N),  && |a|_{k}^{e}(w) && \leq h^{(j+2) / 2}\left(\frac{\omega(t)}{t}\right)^2e^{\psi(t)}{\tilde\theta}^{2+j}.
		\end{alignat}
		Let $a^{(0)}_t+a^{(1)}_t(x, \xi)$ be the first order Taylor expansion of $a$ at $(x,\xi)=0 .$ Let $v \in \R^2$. By the above semi-norms we have
		\begin{alignat}{2}
			\label{t1}
			&\text{in } \zint(N), \quad &&
			\begin{cases}
				\begin{array}{l}
					\left(a^{(0)}_t v, v\right)+\left(a^{(1)}_t(x, 0) v, v\right)+\frac{|x|^{2}}{2}(\tilde\theta v,v) \geqq 0, \\
					\left(a^{(0)}_t v, v\right)+\left(a^{(1)}_t(0, \xi) v, v\right)+ \frac{|\xi|^{2}}{2}(\tilde\theta v,v) \geqq 0,
				\end{array}
			\end{cases}\\
			\label{t2}
			& \text{in } \zmid(N), \quad &&
			\begin{cases}
				\begin{array}{l}
					\left(a^{(0)}_t v, v\right)+\left(a^{(1)}_t(x, 0) v, v\right)+\frac{|x|^{2}}{2}\left(\frac{\theta(t)}{t}\tilde\theta^2hv,v\right) \geqq 0, \\
					\left(a^{(0)}_tv, v\right)+\left(a^{(1)}_t(0, \xi) v, v\right)+\frac{|\xi|^{2}}{2}\left(\frac{\theta(t)}{t}\tilde\theta^2hv,v\right) \geqq 0,
				\end{array}
			\end{cases}\\
			\label{t3}
		 & \text{in } \zext(N), \quad &&
		 \begin{cases}
		 	\begin{array}{l}
		 		\left(a^{(0)}_t v, v\right)+\left(a^{(1)}_t(x, 0) v, v\right)+\frac{|x|^{2}}{2}\left(\frac{\theta(t)^2}{t^2}e^{\psi(t)}\tilde\theta^4h^2v,v\right) \geqq 0, \\
		 		\left(a^{(0)}_t v, v\right)+\left(a^{(1)}_t(0, \xi) v, v\right)+\frac{|\xi|^{2}}{2}\left(\frac{\theta(t)^2}{t^2}e^{\psi(t)}\tilde\theta^4h^2v,v\right) \geqq 0.
		 	\end{array}
		 \end{cases}
		\end{alignat}
		Let us fix $0< \varepsilon <\varepsilon'<1.$ Using the definition of the regions, we see that in the whole of extended phase space
		\[
			\begin{cases}
				\begin{array}{l}
					\left(t^{1-\varepsilon'}a_t^{(0)} v, v\right)+\left(t^{1-\varepsilon'}a_t^{(1)}(x, 0) v, v\right)+\frac{|x|^{2}}{2}(t^{1-\varepsilon'}\tilde\theta(t)^q v,v)  \geqq 0, \\
					\left(t^{1-\varepsilon'}a_t^{(0)} v, v\right)+\left(t^{1-\varepsilon'}a_t^{(1)}(0, \xi) v, v\right)+\frac{|\xi|^{2}}{2}(t^{1-\varepsilon'}\tilde\theta(t)^q v,v)  \geqq 0,
				\end{array}
			\end{cases}
		\]
		where $q=1$ in $\zint(N)$ while in $\zmid(N)$ and $\zext(N)$ $q=2.$ Since the function $\tilde\theta(t)$ is of logarithmic type, in the whole of extended phase space, we have
		\[
			\begin{cases}
				\begin{array}{l}
					t^{1-\varepsilon} \left( (a_t^{(0)}  v, v)+ (a_t^{(1)} (x, 0) v, v)+\frac{|x|^{2}}{2}\|v\|^{2} \right)  \geqq 0, \\
					t^{1-\varepsilon} \left( (a_t^{(0)}  v, v)+ (a_t^{(1)} (0, \xi) v, v)+\frac{|\xi|^{2}}{2}\|v\|^{2} \right)  \geqq 0.
				\end{array}
			\end{cases}
		\]
		From here on we proceed as in \cite[Theorem 18.6.14]{Horm} to obtain the result.
	\end{proof}
	
	\addcontentsline{toc}{section}{References}

\end{document}